\pdfoutput=1
\documentclass[a4paper,fleqn]{cas-sc}

\usepackage[numbers,sort&compress]{natbib}
\usepackage{amsthm,mathtools}
\usepackage[inline]{enumitem}
\usepackage{graphicx}
\usepackage{placeins}
\usepackage{float}
\newfloat{algorithm}{tbp}{loa}
\floatname{algorithm}{Algorithm}
\usepackage{cleveref}
\newcommand{\R}{\mathbb{R}}

\newcommand{\V}{V}
\newcommand{\Vstar}{V^\ast}
\newcommand{\W}{W}
\newcommand{\NNc}{\mathcal{N}}
\newcommand{\dd}{\,\mathrm{d}}
\newcommand{\divx}{\operatorname{div}_x}
\newcommand{\divtx}{\operatorname{div}_{t,x}}
\newcommand{\gradx}{\nabla_x}
\newcommand{\pos}[1]{\left(#1\right)^{+}}
\newcommand{\norm}[2]{\left\|#1\right\|_{#2}}
\newcommand{\pair}[2]{\left\langle #1,#2\right\rangle}
\newcommand{\LQ}{L^2(Q)}
\newcommand{\LO}{L^2(\Omega)}
\newcommand{\bu}{\mathbf{u}}
\newcommand{\bv}{\mathbf{v}}
\newcommand{\uTh}{\bu_{\Theta}}
\newcommand{\Res}{\mathcal{R}}
\newcommand{\enorm}[1]{\left|\!\left|\!\left|#1\right|\!\right|\!\right|_{U}}
\newcommand{\wnorm}[1]{\left|#1\right|_{U}}
\newcommand{\Ainv}{A^{-1/2}}
\newcommand{\Ahalf}{A^{1/2}}

\DeclareMathOperator{\relu}{ReLU}
\DeclareMathOperator{\diam}{diam}
\DeclareMathOperator{\esssup}{ess\,sup}

\crefname{assumption}{Assumption}{Assumptions}
\Crefname{assumption}{Assumption}{Assumptions}
\crefname{remark}{Remark}{Remarks}
\Crefname{remark}{Remark}{Remarks}
\crefname{definition}{Definition}{Definitions}
\Crefname{definition}{Definition}{Definitions}
\crefformat{appendix}{Appendix~#2#1#3}
\Crefformat{appendix}{Appendix~#2#1#3}
\crefmultiformat{appendix}{Appendices~#2#1#3}{ and~#2#1#3}{, #2#1#3}{ and~#2#1#3}
\Crefmultiformat{appendix}{Appendices~#2#1#3}{ and~#2#1#3}{, #2#1#3}{ and~#2#1#3}
\crefformat{subappendix}{Appendix~#2#1#3}
\Crefformat{subappendix}{Appendix~#2#1#3}
\crefmultiformat{subappendix}{Appendices~#2#1#3}{ and~#2#1#3}{, #2#1#3}{ and~#2#1#3}
\Crefmultiformat{subappendix}{Appendices~#2#1#3}{ and~#2#1#3}{, #2#1#3}{ and~#2#1#3}
\crefname{algorithm}{Algorithm}{Algorithms}
\Crefname{algorithm}{Algorithm}{Algorithms}
\Crefname{appendix}{Appendix}{Appendices}

\theoremstyle{plain}
\newtheorem{theorem}{Theorem}[section]
\newtheorem{proposition}[theorem]{Proposition}
\newtheorem{lemma}[theorem]{Lemma}
\newtheorem{corollary}[theorem]{Corollary}
\theoremstyle{definition}
\newtheorem{definition}[theorem]{Definition}
\newtheorem{remark}[theorem]{Remark}
\newtheorem{assumption}[theorem]{Assumption}

\ExplSyntaxOn
\cs_set:Npn \__first_footerline:
{
  \group_begin:
  \small \sffamily
  D.~Damircheli:~{ \rmfamily \itshape Preprint }
  \group_end:
}
\ExplSyntaxOff

\begin{document}
\let\WriteBookmarks\relax
\def\floatpagepagefraction{1}
\def\textpagefraction{.001}

\shorttitle{Certified neural approximations for parabolic obstacle problems}
\shortauthors{D. Damircheli}

\title[mode=title]{Certified neural approximations for parabolic obstacle problems}

\author[1]{Davood Damircheli}
\cormark[1]
\ead{ddamircheli@lsu.edu}
\credit{Conceptualization, Methodology, Formal analysis, Software, Validation,
Investigation, Data curation, Visualization, Writing -- original draft, Writing --
review \& editing}
\affiliation[1]{organization={Department of Mathematics, Louisiana State University},
            city={Baton Rouge},
            postcode={70803},
            state={LA},
            country={USA}}

\cortext[1]{Corresponding author}

\begin{abstract}
Residual-trained neural surrogates carry no error guarantee, a limitation for
free-boundary problems such as phase change and contact. We derive guaranteed error
bounds for neural approximations of parabolic obstacle problems in first-order
state--flux--multiplier form. The main result bounds the flux error of the network
and the energy error of its feasible projection $\max\{v,g\}$ by the constitutive
residual, the initial mismatch, the complementarity defect, and the PDE residual weighted by $c_F^2/\alpha$, without the
coercivity and norm-equivalence constants of the first-order formulation. It extends the Apushkinskaya--Repin majorant for the heat operator to
general coefficients, free multipliers and a flux bound. A penetration-consistency
result shows that neural triples with arbitrarily small bound exist.
Outward-rounded cellwise Taylor-model enclosures turn the bound into a finite
computation valid on any adaptive partition. The flux and gradient
errors of the projected output are certified below $0.27$ and $0.33$ of the
solution scale for a one-dimensional Stefan problem and, after adaptive
refinement, below one half for all three trained networks on a two-dimensional
expanding disk. On an expanding ellipse the gradient bound stays above one half;
substitution diagnostics trace the floor of the trained networks to the projected
state near the free boundary, and the remaining gap to enclosure overhead. Without a closed-form solution, a validated lower bound
replaces the solution norm; there the enclosure is nearly exact, but the trained
networks have errors above the solution scale and no useful accuracy is certified.
The bounds control a state--flux norm, not the multiplier or the free-boundary
location.
\end{abstract}

\begin{keywords}
Parabolic obstacle problem \sep Stefan problem \sep Functional a posteriori error
estimate \sep Physics-informed neural networks \sep Validated numerics \sep
Adaptive certification
\end{keywords}

\maketitle

\section{Introduction}\label{sec:intro}

Parabolic obstacle problems describe evolutions constrained by a unilateral
condition: phase change with a moving melting front, contact of a membrane with a
rigid obstacle, and, outside mechanics, optimal stopping and American option
pricing \cite{Duvaut1973,KinderlehrerStampacchia,Rodrigues,PeskirShiryaev,Seydel}.
The constraint and the complementary reaction generate a moving contact set and
reduced regularity across the free boundary. Neural PDE solvers
\cite{Raissi2019,EYu2018,Sirignano2018} are increasingly used as flexible
surrogates for such problems, but a small empirical training loss does not
control the error to the exact solution or certify complementarity. Where a
neural surrogate is to replace or accompany a conventional simulation, a
computable bound on the error of the delivered network is needed.

This paper provides such bounds. It is organized around the chain
\[
\begin{gathered}
\text{admissible neural triple}\Longrightarrow\text{continuum certificate}\\
\Longrightarrow\text{finite upper-sum certificate}
\Longrightarrow\text{localized diagnostics}.
\end{gathered}
\]
The starting point is the first-order space--time formulation of
\cite{CDFFK2025}, in which the unknown is a state--flux--multiplier triple and the
reliability argument applies to an arbitrary admissible comparison triple, without
Galerkin orthogonality or a discrete variational inequality. We use this
observation as a certification principle for neural approximations.

\paragraph{Main result.} A neural state need not satisfy the obstacle constraint,
but its projection $\widehat v=\max\{v,g\}$ does, and a feasible state can be
inserted directly into the variational inequality. \Cref{thm:direct} exploits
this: for the projected triple $(\widehat v,\tau,\mu)$ and every $\theta\in(0,1]$,
\[
\begin{aligned}
&\norm{\Ainv(\sigma-\tau)}{\LQ}^2+(1-\theta)\norm{\Ahalf\gradx(u-\widehat v)}{\LQ}^2
+\norm{(u-\widehat v)(T)}{\LO}^2\\
&\qquad\le\norm{\widehat S}{\LQ}^2+\norm{u_0-\widehat v(0)}{\LO}^2+2\rho_c^2
+\frac{c_F^2}{\alpha\theta}\norm{\widehat\Res}{\LQ}^2,
\end{aligned}
\]
where $\widehat S$ and $\widehat\Res$ are the constitutive and PDE residuals of the
projected triple and $\rho_c^2$ is the complementarity defect. The estimate is a
functional a posteriori estimate in the sense of Repin \cite{Repin2008} and of
hypercircle type in the sense of Prager and Synge \cite{PragerSynge1947}; it needs
no coercivity lemma, no tunable weight and no norm-equivalence constant. For the heat operator, Apushkinskaya and Repin
\cite{ApushkinskayaRepin2022} already bound the error of any feasible state by a majorant
in which the coincidence set of the approximation enters through a positive part.
\Cref{thm:direct} adds general coefficients $A,b,c$, an explicit and freely chosen
multiplier whose only cost is the complementarity defect, and a bound for the
flux error; its gradient form (\Cref{cor:grad-AR}) contains the heat-operator
majorant (\Cref{rem:AR}). All reported certificates are evaluations of this
theorem.

\paragraph{Attainability.} A reliable bound that cannot be made small on the
neural class would be useless. A penetration-consistency lemma
(\Cref{lem:penetration}) and a density theorem (\Cref{thm:density}) show that the
right-hand side of the direct certificate tends to zero along suitable neural
sequences. The result is qualitative (\Cref{rem:rates}).

\paragraph{Finite certification.} The continuum bound is a sum of integrals. They
are replaced by cellwise upper sums of outward-rounded enclosures on arbitrary
space--time partitions (\Cref{lem:upper-sum}), so any refinement strategy yields a
valid bound and adaptivity affects only cost. The enclosures use standard
interval and Taylor-model arithmetic; the contribution is the per-term evaluation
that matches \Cref{thm:direct}, its treatment of the projection, and a strict
separation of what is proved, what is computed with outward rounding, and what is
diagnostic (\Cref{tab:status}).

\paragraph{Numerical assessment.} Four benchmarks answer four questions. On a
one-dimensional Stefan problem and on an expanding disk in two space dimensions,
the bounds are useful: flux and gradient errors of the projected output are certified below half
the exact solution scale on all seeds. On the disk, refinement removes most of the
enclosure overhead, and the per-term split shows where the remainder sits. On an
expanding ellipse, refinement exposes a floor set by the error of the projected
state near the free boundary, and the gradient bound stays above half the scale
because of enclosure overhead in the residual term. On a problem without
closed-form solution a validated lower bound of the solution scale replaces the
exact norm; the enclosure is nearly exact there, but the trained networks are
inaccurate (\Cref{sec:rising}).

Certification is separated from training: the networks may be trained with any
empirical loss, and the certificate is evaluated independently on the frozen
triple. The certified quantities are the flux error, the energy error of the
projected state and the state--flux weak norm; the multiplier and the
free-boundary location are not certified, and their errors in the manufactured
tests are diagnostics.

\paragraph{Relation to prior work.} A posteriori analysis and space--time
formulations for parabolic variational inequalities include
\cite{Achdou2008,MoonEtAl2007,Dabaghi2020,Fuehrer2020,FuehrerKarkulik2021,GantnerStevenson2021,GantnerStevenson2024,StevensonStorn2023}.
Bounds for an arbitrary conforming approximation without Galerkin orthogonality go
back to the hypercircle method \cite{PragerSynge1947}, developed systematically in
Repin's functional estimates, including the heat equation \cite{Repin2002,Repin2008},
and applied to elliptic obstacle problems in \cite{BraessHoppeSchoeberl2008}. The closest prior result is the parabolic obstacle majorant of
\cite{ApushkinskayaRepin2022}, compared in detail in \Cref{rem:AR}.
Neural PDE solvers and their analysis include
\cite{Raissi2019,EYu2018,Sirignano2018,Han2018,Cai2020,Berrone2022,Lu2021,Wu2023,DeRyck2021,DeRyckMishra2022,Guehring2020,MishraMolinaro2022}.
Closer to certification, Hillebrecht and Unger derive rigorous a posteriori error
bounds for PINNs, and Eiras et al.\ certify worst-case PINN residuals over
continuous domains \cite{HillebrechtUnger2022,Eiras2024}. The present work targets
obstacle problems through an admissible state--flux--multiplier representation,
and bounds an integrated residual--complementarity functional, hence a
solution-error norm, rather than pointwise residuals. Validated integration and range
enclosures are classical
\cite{Moore2009,Tucker2011,FigueiredoStolfi1997,MakinoBerz2003,Johansson2017}.

\paragraph{Outline.} \Cref{sec:model} states the problem and the admissible
approximations, \Cref{sec:direct} the direct certificates, \Cref{sec:neural} the
neural classes, training and attainability, and \Cref{sec:quad} the verified
finite evaluation. \Cref{sec:numerics} reports the numerical assessment and
\Cref{sec:conclusions} concludes. \Cref{app:estimator} contains the
arbitrary-triple reliability route used for comparison, and
\Cref{app:enclosures} the enclosure rules, the arithmetic basis and the reference
norms.
\section{The parabolic obstacle problem and admissible approximations}\label{sec:model}

Let $\Omega\subset\R^d$, $d\ge1$, be a bounded Lipschitz domain, $T>0$, and
$Q=(0,T)\times\Omega$. Formally, we seek $u$ with
\begin{equation}\label{eq:model}
\begin{gathered}
\partial_t u+Lu \ge f,\qquad u\ge g,\qquad (\partial_tu+Lu-f)(u-g)=0\quad \text{in }Q,\\
u=0\quad\text{on }(0,T)\times\partial\Omega,\qquad u(0)=u_0\quad\text{in }\Omega,
\end{gathered}
\end{equation}
where $Lu=-\divx(A\gradx u)+b\cdot\gradx u+cu$. Throughout, $\pair{\cdot}{\cdot}$
denotes the $\Vstar$--$\V$ duality pairing, $\pos{s}=\max\{s,0\}$, and a functional
in $\Vstar$ is nonnegative if $\pair{\mu}{v}\ge0$ for every nonnegative $v\in\V$.

\begin{assumption}[coefficients]\label{ass:coeff}
$A^\top=A\in L^\infty(Q)^{d\times d}$, $b\in L^\infty(Q)^d$ with
$\divx b\in L^\infty(Q)$, $c\in L^\infty(Q)$, $-\tfrac12\divx b+c\ge0$ a.e.\ in $Q$,
and there is $\alpha>0$ with $y\cdot A(t,x)y\ge\alpha|y|^2$ for all $y\in\R^d$, a.e.\
in $Q$. We write $\|A\|_\infty:=\esssup_Q\|A(t,x)\|_2$.
\end{assumption}
The condition on $\divx b$ makes the integration by parts
$(b\cdot\gradx z,z)_{\LQ}=-\tfrac12\int_Q(\divx b)\,z^2$, $z\in\V$, legitimate. All
benchmarks of \Cref{sec:numerics} have $b=0$ and $c=0$.

We use
\[
\V=L^2((0,T);H^1_0(\Omega)),\qquad
\Vstar=L^2((0,T);H^{-1}(\Omega)),
\]
$\W=\V\cap H^1((0,T);H^{-1}(\Omega))$
and $\widetilde W=L^2((0,T);H^1(\Omega))\cap H^1((0,T);H^{-1}(\Omega))$, with
$\norm{v}{\V}=\norm{\gradx v}{\LQ}$. Recall $\W\hookrightarrow C^0([0,T];\LO)$ and,
for $v,w\in\W$,
\begin{equation}\label{eq:ibp}
\begin{aligned}
\int_0^T\pair{\partial_tv(t)}{w(t)}\dd t
={}&-\int_0^T\pair{\partial_tw(t)}{v(t)}\dd t\\
&+(v(T),w(T))_{\LO}-(v(0),w(0))_{\LO}.
\end{aligned}
\end{equation}
Ellipticity gives
\begin{equation}\label{eq:Anorm}
\alpha\norm{v}{\V}^2\le\norm{\Ahalf\gradx v}{\LQ}^2,
\qquad
\norm{\tau}{\LQ}^2\le\|A\|_\infty\norm{\Ainv\tau}{\LQ}^2.
\end{equation}
Let $c_F$ be the Friedrichs constant of $\Omega$, the smallest constant with
$\norm{z}{\LO}\le c_F\norm{\gradx z}{\LO}$ for $z\in H^1_0(\Omega)$. For a box
$\prod_{j=1}^d(a_j,b_j)$, $c_F^2=\bigl(\pi^2\sum_j(b_j-a_j)^{-2}\bigr)^{-1}$, so
$c_F^2=1/\pi^2$ on $(0,1)$ and $c_F^2=1/(2\pi^2)$ on $(0,1)^2$.

\begin{assumption}[data]\label{ass:data}
$f\in\LQ$, $u_0\in\LO$, and $g\in\widetilde W\cap C^0(\overline Q)$ with
\begin{equation}\label{eq:compat}
g|_{(0,T)\times\partial\Omega}\le0,
\qquad g(0)\le u_0\quad\text{in }\Omega.
\end{equation}
Moreover $\psi:=\partial_tg+Lg-f\in\Vstar$ admits a decomposition
$\psi=\psi^+-\psi^-$ with $0\le\psi^\pm\in\Vstar$.
\end{assumption}

\begin{assumption}[obstacle regularity]\label{ass:g-reg}
In addition, $g\in L^2((0,T);H^1(\Omega))\cap H^1((0,T);\LO)$.
\end{assumption}

Let $K_\V=\{v\in\V:v\ge g\}$. The obstacle problem is: find $u\in\W\cap K_\V$ with
$u(0)=u_0$ such that
\begin{equation}\label{eq:VI-canonic}
\pair{\partial_tu+Lu}{v-u}\ge\pair{f}{v-u}
\qquad\forall v\in K_\V.
\end{equation}
Under \Cref{ass:coeff,ass:data}, \eqref{eq:VI-canonic} has a unique solution
\cite[Thm.~1, Rem.~2]{CharrierTroianiello}. With
\begin{equation}\label{eq:def-sigma-lambda}
\sigma:=-A\gradx u\in\LQ^d,
\qquad
\lambda:=\partial_tu+Lu-f\in\Vstar,
\end{equation}
\eqref{eq:VI-canonic} reads $\pair{\lambda}{v-u}\ge0$ for all $v\in K_\V$, and with
$\divtx(v,\tau):=\partial_tv+\divx\tau$ the exact triple $\bu=(u,\sigma,\lambda)$
satisfies the first-order system
\begin{subequations}\label{eq:fos}
\begin{align}
\divtx(u,\sigma)+b\cdot\gradx u+cu-\lambda&=f &&\text{in }Q,\label{eq:fos-a}\\
\sigma+A\gradx u&=0 &&\text{in }Q,\label{eq:fos-b}\\
u\ge g,\quad \lambda\ge0,\quad \lambda(u-g)&=0 &&\text{in }Q,\label{eq:fos-c}
\end{align}
\end{subequations}
with $u=0$ on $(0,T)\times\partial\Omega$ and $u(0)=u_0$; the complementarity line
is understood through \eqref{eq:VI-canonic} whenever the pointwise product is not
defined. The natural graph space and weak norm are
\begin{equation}\label{eq:U}
U:=\left\{(v,\tau,\mu)\in\V\times\LQ^d\times\Vstar:
\divtx(v,\tau)-\mu\in\LQ\right\},
\end{equation}
\begin{equation}\label{eq:wnorm}
\wnorm{\bv}^2
:=\norm{v}{\V}^2+\norm{v(0)}{\LO}^2+\norm{v(T)}{\LO}^2
+\norm{\tau}{\LQ}^2+\norm{\divtx(v,\tau)-\mu}{\LQ}^2 .
\end{equation}
If $(v,\tau,\mu)\in U$, then $v\in\W$, so the traces $v(0)$, $v(T)$ are defined.
The norm controls the combination $\divtx(v,\tau)-\mu$, not $\partial_tv$ and $\mu$
separately, which is the natural topology of the first-order formulation
\cite{CDFFK2025}.

\begin{definition}[admissible triple]\label{def:admissible}
A triple $\bv=(v,\tau,\mu)$ is \emph{admissible} if
\begin{enumerate*}[label=(A\arabic*)]
\item\label{a1} $\bv\in U$;
\item\label{a2} $\mu\in\LQ$ and $\mu\ge0$ a.e.\ in $Q$;
\item\label{a3} $\pos{g-v}\in\W$.
\end{enumerate*}
We write $\mathbb{A}$ for the set of admissible triples.
\end{definition}

Hypothesis \ref{a3} allows the penetration $\pos{g-v}$ to be used as a test
function. The next lemma gives a checkable sufficient condition that covers the
neural classes of \Cref{sec:neural}.

\begin{lemma}[sufficient condition for \ref{a3}]\label{lem:positive-part}
Assume \Cref{ass:g-reg} and $v\in\V\cap H^1((0,T);\LO)$. Then
$w:=\pos{g-v}\in \V\cap H^1((0,T);\LO)\subset\W$, with
\begin{equation}\label{eq:stampacchia}
\gradx w=\mathbf{1}_{\{v<g\}}\gradx(g-v),
\qquad
\partial_t w=\mathbf{1}_{\{v<g\}}\partial_t(g-v)
\qquad\text{a.e.\ in }Q ,
\end{equation}
and $w(0)\le\pos{u_0-v(0)}$ a.e.\ in $\Omega$.
\end{lemma}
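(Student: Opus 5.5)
Set $z:=g-v$. By \Cref{ass:g-reg} and the hypothesis on $v$, we have $z\in L^2((0,T);H^1(\Omega))\cap H^1((0,T);\LO)=H^1(Q)$, since $\nabla_{t,x}z\in\LQ^{d+1}$. The plan is to apply the Stampacchia chain rule for the positive part, $\nabla\pos{z}=\mathbf{1}_{\{z>0\}}\nabla z$, for $z\in W^{1,1}_{\mathrm{loc}}$. We use it on the space--time domain $Q$ rather than on $\Omega$ alone, since only then do both partial derivatives come out at once. This gives $w=\pos{z}\in H^1(Q)$ together with \eqref{eq:stampacchia}, the set $\{z>0\}$ being $\{v<g\}$. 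The squares of $\gradx w$ and $\partial_t w$ are dominated by those of $\gradx z$ and $\partial_t z$. Hence $w\in L^2((0,T);H^1(\Omega))\cap H^1((0,T);\LO)$. Here the identification of the distributional derivative on $Q$ with the Bochner time derivative is standard.

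The main point is the zero lateral trace, i.e.\ $w(t)\in H^1_0(\Omega)$ for a.e.\ $t$. For a.e.\ $t$ we have $v(t)\in H^1_0(\Omega)$ and $g(t)\in H^1(\Omega)\cap C^0(\overline\Omega)$ with $g(t)\le0$ on $\partial\Omega$ by \eqref{eq:compat}. The trace operator on a Lipschitz domain commutes with $\pos{\cdot}$. We would argue this by approximating $z(t)$ in $H^1$ by functions in $C(\overline\Omega)\cap H^1$, using continuity of $\pos{\cdot}$ on $H^1$ and continuity of the trace. This gives $\operatorname{tr}w(t)=\pos{\operatorname{tr}g(t)-\operatorname{tr}v(t)}=\pos{g(t)|_{\partial\Omega}}=0$. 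An alternative route, avoiding traces, is to use $0\le w(t)\le\pos{g(t)}+\abs{v(t)}$ together with a comparison lemma: a function in $H^1(\Omega)$ lying between $0$ and an $H^1_0$ function is in $H^1_0$. This reduces to noting that $\pos{g(t)}$ is continuous and vanishes on $\partial\Omega$, hence is in $H^1_0$. I expect this step to be the main obstacle, mainly in justifying that $\pos{g(t)}\in H^1_0(\Omega)$ and that the domination lemma applies. Then $w\in\V\cap H^1((0,T);\LO)$. The inclusion in $\W$ follows because $\LO\hookrightarrow H^{-1}(\Omega)$ continuously.

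For the initial value, the map $s\mapsto\pos{s}$ is Lipschitz on $\LO$. Also $g,v\in H^1((0,T);\LO)\subset C^0([0,T];\LO)$, so $w\in C^0([0,T];\LO)$ and $w(0)=\pos{g(0)-v(0)}$. This is consistent with the $\W$-trace, since both embeddings give the same continuous representative. Finally, $g(0)\le u_0$ from \eqref{eq:compat} and monotonicity of $\pos{\cdot}$ give $w(0)\le\pos{u_0-v(0)}$ a.e.\ in $\Omega$.
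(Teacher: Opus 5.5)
Your proposal is correct and follows the paper's proof. Both arguments use three ingredients: Stampacchia's truncation theorem for the derivative identities, the signs $v=0$ and $g\le0$ on the lateral boundary to get $w(t)\in H^1_0(\Omega)$, and continuity in time together with $g(0)\le u_0$ for the initial inequality. Two details differ: you apply the chain rule to $g-v$ as an $H^1(Q)$ function, which gives both identities in \eqref{eq:stampacchia} at once, whereas the paper argues slicewise in space and then by the Lipschitz property in time; and you justify the zero lateral trace (via the trace of the positive part or a comparison lemma), a step the paper states in one line.
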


\begin{proof}
By \Cref{ass:g-reg}, $g-v\in L^2((0,T);H^1(\Omega))\cap H^1((0,T);\LO)$.
Stampacchia's truncation theorem \cite[Ch.~II]{KinderlehrerStampacchia}, applied for
a.e.\ $t$ in space and, since $z\mapsto z^+$ is $1$-Lipschitz, in time, gives
$w\in L^2((0,T);H^1(\Omega))\cap H^1((0,T);\LO)$ and \eqref{eq:stampacchia}. Since
$v=0$ and $g\le0$ on the lateral boundary, $w(t)\in H^1_0(\Omega)$ for a.e.\ $t$,
so $w\in\V$, and $H^1((0,T);\LO)\subset H^1((0,T);H^{-1}(\Omega))$ gives $w\in\W$.
Both $w$ and $v$ are continuous in time with values in $\LO$, and $g(0)\le u_0$,
so $w(0)=\pos{g(0)-v(0)}\le\pos{u_0-v(0)}$.
\end{proof}
\section{Direct a posteriori certificates for feasible outputs}\label{sec:direct}

\subsection{Feasible projection}
For $\bv=(v,\tau,\mu)\in\mathbb A$ put $w:=\pos{g-v}$, $\widehat v:=\max\{v,g\}=v+w$
and $\widehat{\bv}:=(\widehat v,\tau,\mu)$, and define the complementarity defect
\begin{equation}\label{eq:rho-c}
\rho_c^2(\bv):=\left(\mu,\pos{v-g}\right)_{\LQ}\ \ge 0 .
\end{equation}

\begin{lemma}[feasible projection]\label{lem:projection}
Let \Cref{ass:g-reg} hold and let $\bv\in\mathbb A$ with
$v\in\V\cap H^1((0,T);\LO)$. Then $\widehat v\in\V\cap H^1((0,T);\LO)$,
$\widehat v\ge g$, $\widehat{\bv}\in\mathbb A$, $\pos{\widehat v-g}=\pos{v-g}$, so
$\rho_c(\widehat{\bv})=\rho_c(\bv)$, and $|u_0-\widehat v(0)|\le|u_0-v(0)|$ a.e.\
in $\Omega$.
\end{lemma}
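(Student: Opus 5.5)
The plan is to reduce everything to \Cref{lem:positive-part} and elementary pointwise identities for $\max$ and the positive part. Since $\bv\in\mathbb A$ and $v\in\V\cap H^1((0,T);\LO)$, \Cref{lem:positive-part} gives $w=\pos{g-v}\in\V\cap H^1((0,T);\LO)$. Hence $\widehat v=v+w$ lies in the same space, being a sum of two elements of the linear space $\V\cap H^1((0,T);\LO)$. The feasibility $\widehat v\ge g$ is immediate from $\widehat v=\max\{v,g\}$ pointwise a.e.

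The identity $\pos{\widehat v-g}=\pos{v-g}$ follows pointwise from two cases. Where $v\ge g$, both sides equal $v-g$. Where $v<g$, we have $\widehat v=g$, so both sides vanish. Inserting this into \eqref{eq:rho-c} with the unchanged multiplier $\mu$ gives $\rho_c(\widehat\bv)=\rho_c(\bv)$.

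For $\widehat\bv\in\mathbb A$ I check (A1)--(A3) in turn.

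(A2) concerns only $\mu$, which is unchanged.

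(A3) holds because $\pos{g-\widehat v}=0\in\W$.

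(A1) needs $\widehat v\in\V$, which is already shown, together with $\divtx(\widehat v,\tau)-\mu\in\LQ$. Write
\[
\divtx(\widehat v,\tau)-\mu=\bigl(\divtx(v,\tau)-\mu\bigr)+\partial_t w .
\]
The first term is in $\LQ$ by (A1) for $\bv$. The second is in $\LQ$ because $w\in H^1((0,T);\LO)$. The only subtlety is that $\partial_t v$ is a priori a $\Vstar$-object inside the combination; since $v\in H^1((0,T);\LO)$, all time derivatives are genuine $L^2$ functions, and the splitting is legitimate in $\Vstar$.

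For the initial trace, use the continuity of $v$ and $w$ in $C^0([0,T];\LO)$ to get $\widehat v(0)=\max\{v(0),g(0)\}$ a.e. in $\Omega$, with $g$ continuous on $\overline Q$. Then split pointwise a.e. in $\Omega$:
\begin{itemize}
\item If $v(0)\ge g(0)$, then $\widehat v(0)=v(0)$ and equality holds.
\item If $v(0)<g(0)$, then $g(0)\le u_0$ from \eqref{eq:compat} gives $v(0)<\widehat v(0)=g(0)\le u_0$. Hence $0\le u_0-\widehat v(0)\le u_0-v(0)$.
\end{itemize}
This is the same monotonicity already used in \Cref{lem:positive-part}.

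The main point requiring care is the trace identification $\widehat v(0)=\max\{v(0),g(0)\}$, i.e., that taking the maximum commutes with evaluation at $t=0$. I would justify it via the $1$-Lipschitz continuity of $z\mapsto\max\{z,\cdot\}$ from $\LO$ to $\LO$, applied along $t\to0$ in $C^0([0,T];\LO)$. Everything else is routine.
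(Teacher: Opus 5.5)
Your proposal is correct and follows essentially the same route as the paper. Both arguments take $w\in\V\cap H^1((0,T);\LO)$ from \Cref{lem:positive-part}, note that the graph residual changes only by $\partial_t w\in\LQ$, use the pointwise identities for the positive part (with unchanged $\mu$ for \ref{a2} and $\rho_c$), and use $g(0)\le u_0$ for the initial trace; your continuity argument for $\widehat v(0)=\max\{v(0),g(0)\}$ in $C^0([0,T];\LO)$ simply makes explicit a step the paper takes from the proof of \Cref{lem:positive-part}.
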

\begin{proof}
\Cref{lem:positive-part} gives $w\in\V\cap H^1((0,T);\LO)$. The graph residual
changes by $\partial_tw\in\LQ$, so $\widehat{\bv}\in U$; $\widehat v\ge g$ and
$\pos{g-\widehat v}=0$ give \ref{a3}. The identity $\pos{\widehat v-g}=\pos{v-g}$ is
pointwise, and $|u_0-\max\{v(0),g(0)\}|\le|u_0-v(0)|$ follows from $g(0)\le u_0$.
\end{proof}

Projection is postprocessing, not a change of the learned parameters. Its state
derivatives are evaluated piecewise with $\mathbf1_{\{v<g\}}$ by
\eqref{eq:stampacchia}; no second derivative of the maximum is needed.

\subsection{The direct certificate}
The projected state is feasible, so it can itself be inserted into the variational
inequality. This removes every penetration term and makes the coercivity lemma of the
first-order formulation
unnecessary: the error is bounded through an identity whose only inequality is
one application of Friedrichs' inequality. If $\widehat\Res=0$, $\rho_c=0$ and
$\widehat v(0)=u_0$, letting $\theta\downarrow0$ in \eqref{eq:direct} below gives
$\norm{\Ainv(\sigma-\tau)}{\LQ}^2+\norm{\Ahalf\gradx(u-\widehat v)}{\LQ}^2
+\norm{(u-\widehat v)(T)}{\LO}^2\le\norm{\widehat S}{\LQ}^2$, a parabolic analogue
of the Prager--Synge hypercircle relation \cite{PragerSynge1947}; with nonzero
residual the estimate is of the majorant type of \cite{Repin2002,Repin2008}.

\begin{theorem}[direct certificate for the feasible output]\label{thm:direct}
Let \Cref{ass:coeff,ass:data,ass:g-reg} hold, let $\bu=(u,\sigma,\lambda)$ be the
exact triple, and let $\bv=(v,\tau,\mu)\in\mathbb A$ with
$v\in\V\cap H^1((0,T);\LO)$. Define the residuals of the projected triple
\begin{equation}\label{eq:direct-residuals}
\widehat\Res:=f-\partial_t\widehat v-\divx\tau-b\cdot\gradx\widehat v-c\widehat v+\mu,
\qquad
\widehat S:=\Ainv\tau+\Ahalf\gradx\widehat v ,
\end{equation}
and write $e:=u-\widehat v$, $e_\sigma:=\sigma-\tau$. Then
\begin{equation}\label{eq:direct-graph}
\divtx(e,e_\sigma)+b\cdot\gradx e+ce-(\lambda-\mu)=\widehat\Res\quad\text{in }\LQ ,
\end{equation}
and for every $\theta\in(0,1]$,
\begin{equation}\label{eq:direct}
\begin{aligned}
&\norm{\Ainv e_\sigma}{\LQ}^2+(1-\theta)\norm{\Ahalf\gradx e}{\LQ}^2
+\norm{e(T)}{\LO}^2\\
&\qquad\le
\norm{\widehat S}{\LQ}^2+\norm{u_0-\widehat v(0)}{\LO}^2+2\rho_c^2(\bv)
+\frac{c_F^2}{\alpha\theta}\norm{\widehat\Res}{\LQ}^2 .
\end{aligned}
\end{equation}
In particular, with $\theta=1$,
\begin{equation}\label{eq:direct-flux}
\norm{\Ainv(\sigma-\tau)}{\LQ}^2
\le\norm{\widehat S}{\LQ}^2+\norm{u_0-\widehat v(0)}{\LO}^2+2\rho_c^2(\bv)
+\tfrac{c_F^2}{\alpha}\norm{\widehat\Res}{\LQ}^2 .
\end{equation}
\end{theorem}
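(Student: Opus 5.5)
The plan is to prove the graph identity \eqref{eq:direct-graph} by subtraction, then to test it with the error $e$ itself and close the estimate with a hypercircle-type expansion and a single use of Friedrichs' inequality.

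\textbf{Step 1 (graph identity and regularity).} By \Cref{lem:projection}, $\widehat v\in\V\cap H^1((0,T);\LO)$ with $\widehat v\ge g$, so $\widehat v\in K_\V$ and $e=u-\widehat v\in\W$. The exact triple satisfies \eqref{eq:fos-a}, and we subtract from it the definition \eqref{eq:direct-residuals} of $\widehat\Res$. This gives \eqref{eq:direct-graph} directly, first as an identity in $\Vstar$. Its right-hand side lies in $\LQ$, because $\partial_t\widehat v\in\LQ$, $\divtx(v,\tau)-\mu\in\LQ$ and $\partial_t w\in\LQ$. We also record the constitutive relation
\[
e_\sigma=-A\gradx u-\tau=-A\gradx e-\Ahalf\widehat S,
\qquad\text{so}\qquad
\Ainv e_\sigma=-\bigl(\Ahalf\gradx e+\widehat S\bigr).
\]

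\textbf{Step 2 (testing with $e$).} We pair \eqref{eq:direct-graph} with $e\in\V$ and treat each term in turn.
\begin{enumerate*}[label=(\roman*)]
\item By \eqref{eq:ibp}, the time derivative gives $\tfrac12\norm{e(T)}{\LO}^2-\tfrac12\norm{e(0)}{\LO}^2$, where $e(0)=u_0-\widehat v(0)$.
\item The flux term gives $-(e_\sigma,\gradx e)=\norm{\Ahalf\gradx e}{\LQ}^2+(\widehat S,\Ahalf\gradx e)$.
\item By the integration by parts noted after \Cref{ass:coeff}, the lower-order terms give $\int_Q(c-\tfrac12\divx b)e^2\ge0$, so they can be dropped.
\item The multiplier term is the one that needs care; it is handled in Step 3.
\end{enumerate*}

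\textbf{Step 3 (multiplier term, the main point).} We split
\[
-\pair{\lambda-\mu}{e}=\pair{\lambda}{\widehat v-u}+(\mu,u-\widehat v).
\]
The first part is nonnegative by \eqref{eq:VI-canonic} tested with the admissible function $\widehat v\in K_\V$. This is exactly where feasibility of the projection is used. For the second part we write
\[
(\mu,u-\widehat v)=(\mu,u-g)-(\mu,\widehat v-g).
\]
Here $(\mu,u-g)\ge0$ because $\mu\ge0$ and $u\ge g$. Moreover $\widehat v-g=\pos{v-g}$ by \Cref{lem:projection}, so $(\mu,\widehat v-g)=\rho_c^2(\bv)$. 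Together these give
\[
\tfrac12\norm{e(T)}{\LO}^2+\norm{\Ahalf\gradx e}{\LQ}^2+(\widehat S,\Ahalf\gradx e)
\le\tfrac12\norm{e(0)}{\LO}^2+\rho_c^2+(\widehat\Res,e)_{\LQ}.
\]

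\textbf{Step 4 (hypercircle identity and Young).} We multiply the last inequality by $2$ and use
\[
2\norm{\Ahalf\gradx e}{\LQ}^2+2(\widehat S,\Ahalf\gradx e)
=\norm{\Ainv e_\sigma}{\LQ}^2+\norm{\Ahalf\gradx e}{\LQ}^2-\norm{\widehat S}{\LQ}^2,
\]
which follows from the representation of $\Ainv e_\sigma$ in Step 1. This yields
\[
\norm{\Ainv e_\sigma}{\LQ}^2+\norm{\Ahalf\gradx e}{\LQ}^2+\norm{e(T)}{\LO}^2
\le\norm{\widehat S}{\LQ}^2+\norm{e(0)}{\LO}^2+2\rho_c^2+2(\widehat\Res,e)_{\LQ}.
\]
Finally we bound the residual term using Friedrichs' inequality (applied for a.e.\ $t$), then \eqref{eq:Anorm}, then Young's inequality:
\[
2(\widehat\Res,e)_{\LQ}\le 2c_F\alpha^{-1/2}\norm{\widehat\Res}{\LQ}\norm{\Ahalf\gradx e}{\LQ}
\le\theta\norm{\Ahalf\gradx e}{\LQ}^2+\tfrac{c_F^2}{\alpha\theta}\norm{\widehat\Res}{\LQ}^2 .
\]
Absorbing the $\theta$-term on the left gives \eqref{eq:direct}, and the choice $\theta=1$ gives \eqref{eq:direct-flux}.

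The only delicate point is Step 3. There one must check that $\widehat v$ is a legitimate test function in \eqref{eq:VI-canonic}, which \Cref{lem:projection} guarantees. One must also check that $\mu\in\LQ$ makes the pairing with $u-g\in\V$ meaningful and nonnegative, which follows from \ref{a2}.
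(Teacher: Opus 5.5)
Your proof is correct and follows the paper's argument. You test the graph identity with $e$, and you bound $\pair{\lambda-\mu}{e}\le\rho_c^2(\bv)$ the same way, via the variational inequality with $\widehat v\in K_\V$ and via $\mu\ge0$, $u\ge g$; the hypercircle identity and the single Friedrichs--Young step are also the same. The only difference is the order: you substitute $e_\sigma=-A\gradx e-\Ahalf\widehat S$ into the pairing and then re-expand $\norm{\Ainv e_\sigma}{\LQ}^2$, whereas the paper inserts the pairing into $F+G+2(e_\sigma,\gradx e)_{\LQ}=\norm{\widehat S}{\LQ}^2$ directly. One small slip: $u-g$ is in general only in $\LQ$, not in $\V$, because $g$ need not vanish on the lateral boundary; this suffices since $\mu\in\LQ$.
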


\begin{proof}
By \Cref{lem:projection}, $\widehat{\bv}\in U$, $\widehat v\ge g$ and
$\widehat v\in\V$; hence $e\in\W$ and $e(0)=u_0-\widehat v(0)$. Abbreviate
$F:=\norm{\Ainv e_\sigma}{\LQ}^2$, $G:=\norm{\Ahalf\gradx e}{\LQ}^2$,
$Z_s:=\norm{e(s)}{\LO}^2$.

\emph{Step 1 (constitutive identity).} By \eqref{eq:fos-b},
$e_\sigma+A\gradx e=-(\tau+A\gradx\widehat v)=-\Ahalf\widehat S$, hence
$\Ainv e_\sigma+\Ahalf\gradx e=-\widehat S$ and, by symmetry of $A$,
\begin{equation}\label{eq:direct-step1}
F+G+2\left(e_\sigma,\gradx e\right)_{\LQ}=\norm{\widehat S}{\LQ}^2 .
\end{equation}

\emph{Step 2 (graph identity).} Subtracting the definition of $\widehat\Res$ from \eqref{eq:fos-a} gives
\eqref{eq:direct-graph}. The terms $\partial_te$, $\divx e_\sigma$ and $\lambda-\mu$
belong only to $\Vstar$, but they enter through the grouped expressions
$\divtx(u,\sigma)-\lambda$ and $\divtx(\widehat v,\tau)-\mu$, which lie in $\LQ$
because $\bu,\widehat{\bv}\in U$; hence both sides of \eqref{eq:direct-graph} are in
$\LQ$. Pair \eqref{eq:direct-graph} with $e\in\V$, using the $\Vstar$--$\V$ duality
for each term. Since $e$
vanishes on the lateral boundary, $\pair{\divx e_\sigma}{e}=-(e_\sigma,\gradx e)_{\LQ}$;
by \eqref{eq:ibp}, $\pair{\partial_te}{e}=\tfrac12Z_T-\tfrac12Z_0$; and
$(b\cdot\gradx e+ce,e)_{\LQ}=\int_Q(c-\tfrac12\divx b)\,e^2\ge0$ by
\Cref{ass:coeff}. Therefore
\begin{equation}\label{eq:direct-step2}
-(e_\sigma,\gradx e)_{\LQ}
=(\widehat\Res,e)_{\LQ}-\tfrac12Z_T+\tfrac12Z_0
-\int_Q(c-\tfrac12\divx b)e^2+\pair{\lambda-\mu}{e}.
\end{equation}

\emph{Step 3 (complementarity).} Since $\widehat v\in K_\V$ is admissible in
\eqref{eq:VI-canonic}, $\pair{\lambda}{u-\widehat v}\le0$. Since $\mu\ge0$ and
$u\ge g$, $(\mu,\widehat v-u)_{\LQ}\le(\mu,\widehat v-g)_{\LQ}=\rho_c^2(\bv)$, using
$\widehat v-g=\pos{v-g}$. Hence $\pair{\lambda-\mu}{e}\le\rho_c^2(\bv)$.

\emph{Step 4 (assembly).} Insert \eqref{eq:direct-step2} into
\eqref{eq:direct-step1}, drop the nonnegative convection--reaction term and use
Step~3:
$F+G+Z_T\le\norm{\widehat S}{\LQ}^2+Z_0+2\rho_c^2(\bv)+2(\widehat\Res,e)_{\LQ}$.
Friedrichs' inequality for a.e.\ $t$, \eqref{eq:Anorm} and Young's inequality give
$2(\widehat\Res,e)_{\LQ}\le2c_F\alpha^{-1/2}\norm{\widehat\Res}{\LQ}G^{1/2}
\le\frac{c_F^2}{\alpha\theta}\norm{\widehat\Res}{\LQ}^2+\theta G$, and
\eqref{eq:direct} follows.
\end{proof}

\subsection{Gradient and weak-norm forms}
Write
\[
\mathcal D:=\norm{\widehat S}{\LQ}^2+\norm{u_0-\widehat v(0)}{\LO}^2+2\rho_c^2(\bv),
\qquad
\mathcal L:=\frac{c_F^2}{\alpha}\norm{\widehat\Res}{\LQ}^2,
\]
so that \eqref{eq:direct-flux} reads $\norm{\Ainv(\sigma-\tau)}{\LQ}^2\le\mathcal D+\mathcal L$.

\begin{corollary}[optimal member of the family \eqref{eq:direct}]\label{cor:grad-sharp}
Under the hypotheses of \Cref{thm:direct},
\begin{equation}\label{eq:grad-sharp}
\norm{\Ahalf\gradx(u-\widehat v)}{\LQ}\ \le\ \sqrt{\mathcal L}+\sqrt{\mathcal D+\mathcal L}
=\inf_{\theta\in(0,1)}\Bigl(\frac{\mathcal D+\mathcal L/\theta}{1-\theta}\Bigr)^{1/2}.
\end{equation}
\end{corollary}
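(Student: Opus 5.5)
The plan is to read \eqref{eq:direct} as a one-parameter family of bounds for $G:=\norm{\Ahalf\gradx(u-\widehat v)}{\LQ}^2$ and then optimize over $\theta$ by an explicit computation. For every $\theta\in(0,1)$, \Cref{thm:direct} gives $\norm{\Ainv e_\sigma}{\LQ}^2+(1-\theta)G+\norm{e(T)}{\LO}^2\le\mathcal D+\mathcal L/\theta$. We drop the two nonnegative terms $\norm{\Ainv e_\sigma}{\LQ}^2$ and $\norm{e(T)}{\LO}^2$ and divide by $1-\theta>0$. This yields
\[
G\le h(\theta):=\frac{\mathcal D+\mathcal L/\theta}{1-\theta}\qquad\text{for all }\theta\in(0,1),
\]
and hence $G\le\inf_{\theta\in(0,1)}h(\theta)$. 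The value $\theta=1$ is excluded because it gives no information on $G$.

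It then remains to identify the infimum, which is a purely algebraic step. Set $a:=\sqrt{\mathcal L}$ and $b:=\sqrt{\mathcal D+\mathcal L}$, so $b\ge a\ge0$ and $\mathcal D=b^2-a^2$. We show $\inf_{(0,1)}h=(a+b)^2$ in two parts.

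\emph{Lower bound.} We need $h(\theta)\ge(a+b)^2$ for every $\theta\in(0,1)$. After multiplying by $\theta(1-\theta)>0$, this is equivalent to
\[
\theta(b^2-a^2)+a^2\ \ge\ \theta(1-\theta)(a+b)^2 .
\]
Expanding the right-hand side minus the left-hand side gives
\[
\theta(a+b)\bigl[(1-\theta)(a+b)-(b-a)\bigr]-a^2
=\theta(a+b)\bigl[2a-\theta(a+b)\bigr]-a^2
=-\bigl(\theta(a+b)-a\bigr)^2\le0 .
\]
So the inequality holds.

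\emph{Attainment.} If $a>0$, take $\theta^\ast=a/(a+b)\in(0,1)$. Then $1-\theta^\ast=b/(a+b)$, and $\mathcal D+\mathcal L/\theta^\ast=b^2-a^2+a(a+b)=b(a+b)$, so $h(\theta^\ast)=(a+b)^2$ exactly. If $a=0$, then $h(\theta)=b^2/(1-\theta)$, and this tends to $b^2=(a+b)^2$ as $\theta\downarrow0$. In that case the infimum is approached but not attained, which is consistent with writing $\inf$. The degenerate case $b=0$ is covered by the same formula.

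Combining the two parts gives $G\le(a+b)^2$. Taking square roots yields \eqref{eq:grad-sharp}. There is no real obstacle here: the only points needing care are the boundary case $\mathcal L=0$ and checking that the constraint $\theta\in(0,1)$ is respected by $\theta^\ast$.
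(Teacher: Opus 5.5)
Your proof is correct, but it reaches the bound by a different route from the paper.

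The paper does not optimize the family \eqref{eq:direct} to obtain the bound. It returns to Step~4 of the proof of \Cref{thm:direct}, before Young's inequality is applied. Dropping the flux and trace terms $F$ and $Z_T$ there gives $G\le\mathcal D+2\sqrt{\mathcal L}\,G^{1/2}$, and the paper solves this quadratic inequality as $(G^{1/2}-\sqrt{\mathcal L})^2\le\mathcal D+\mathcal L$. The identity with the infimum is then only a remark: it states $\theta^\ast$ and asserts minimality without proof.

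You instead use \Cref{thm:direct} as a black box. In your argument the attainment step (the value $(a+b)^2$ at $\theta^\ast$, or the limit $\theta\downarrow0$ when $\mathcal L=0$) is what gives the bound. Your lower bound, via the completed square $(\theta(a+b)-a)^2$, is what proves the equality in \eqref{eq:grad-sharp}, a step the paper leaves unchecked. So your version justifies the corollary's title literally and supplies the missing minimization argument.

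The paper's version is shorter and shows directly that Young's inequality costs nothing once $\theta$ is optimized. It can also keep $F+Z_T$ on the left, giving $(G^{1/2}-\sqrt{\mathcal L})^2+F+Z_T\le\mathcal D+\mathcal L$. Reopening the proof is also the template for \Cref{cor:grad-AR}, which reworks the pre-Young identity with a different use of the constitutive relation. A black-box use of \eqref{eq:direct} cannot reach that sharper gradient bound.
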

\begin{proof}
Step~4 gives, before Young's inequality, $G\le\mathcal D+2\sqrt{\mathcal L}\,G^{1/2}$,
i.e.\ $(G^{1/2}-\sqrt{\mathcal L})^2\le\mathcal D+\mathcal L$. If $\mathcal L>0$, the
function $\theta\mapsto(\mathcal D+\mathcal L/\theta)/(1-\theta)$ attains its minimum
$(\sqrt{\mathcal L}+\sqrt{\mathcal D+\mathcal L})^2$ at
$\theta^\ast=\sqrt{\mathcal L}/(\sqrt{\mathcal L}+\sqrt{\mathcal D+\mathcal L})$; if
$\mathcal L=0$ the infimum is approached as $\theta\downarrow0$.
\end{proof}

\Cref{cor:grad-sharp} is optimal only within \eqref{eq:direct}, whose left-hand
side also contains the flux error. If only the gradient is wanted, the
constitutive relation can be used differently, which removes
$\norm{\widehat S}{\LQ}^2$ from under the second square root.

\begin{corollary}[gradient certificate from the energy pairing]\label{cor:grad-AR}
Under the hypotheses of \Cref{thm:direct}, let $s:=\norm{\widehat S}{\LQ}$,
$\ell:=\sqrt{\mathcal L}$ and
$\mathcal D_0:=\norm{u_0-\widehat v(0)}{\LO}^2+2\rho_c^2(\bv)$. Then
\begin{equation}\label{eq:energy-AR}
\norm{\Ahalf\gradx e}{\LQ}^2+\tfrac12\norm{e(T)}{\LO}^2
\le(s+\ell)\,\norm{\Ahalf\gradx e}{\LQ}+\tfrac12\mathcal D_0 ,
\end{equation}
and consequently
\begin{equation}\label{eq:grad-AR}
\begin{gathered}
\norm{\Ahalf\gradx(u-\widehat v)}{\LQ}\le\mathcal G
:=\tfrac12\Bigl(s+\ell+\sqrt{(s+\ell)^2+2\mathcal D_0}\Bigr),\\
\norm{\Ahalf\gradx e}{\LQ}^2+\norm{e(T)}{\LO}^2\le\mathcal D_0+(s+\ell)^2 .
\end{gathered}
\end{equation}
Moreover $\mathcal G\le\sqrt{\mathcal L}+\sqrt{\mathcal D+\mathcal L}$, with strict
inequality unless $\ell=\mathcal D_0=0$.
\end{corollary}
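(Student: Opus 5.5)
We work with the energy pairing rather than the constitutive identity \eqref{eq:direct-step1}. The plan is to pair the graph identity \eqref{eq:direct-graph} with $e$ and to rewrite the flux term so that $\gradx e$ appears directly.

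\textbf{Energy identity.} Step~2 of the proof of \Cref{thm:direct} gives
\[
-(e_\sigma,\gradx e)_{\LQ}=(\widehat\Res,e)_{\LQ}-\tfrac12Z_T+\tfrac12Z_0-\int_Q(c-\tfrac12\divx b)e^2+\pair{\lambda-\mu}{e}.
\]
From $e_\sigma+A\gradx e=-\Ahalf\widehat S$ (Step~1) we get $e_\sigma=-A\gradx e-\Ahalf\widehat S$. Hence
\[
-(e_\sigma,\gradx e)_{\LQ}=G+(\widehat S,\Ahalf\gradx e)_{\LQ}.
\]
Now we insert this and bound the right-hand terms. The convection--reaction term is nonpositive and can be dropped. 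By Step~3, $\pair{\lambda-\mu}{e}\le\rho_c^2$. By Friedrichs' inequality and \eqref{eq:Anorm}, $(\widehat\Res,e)_{\LQ}\le c_F\alpha^{-1/2}\norm{\widehat\Res}{\LQ}G^{1/2}=\ell G^{1/2}$. By Cauchy--Schwarz, $-(\widehat S,\Ahalf\gradx e)_{\LQ}\le sG^{1/2}$. Together these give
\[
G+\tfrac12Z_T\le(s+\ell)G^{1/2}+\tfrac12Z_0+\rho_c^2,
\]
which is exactly \eqref{eq:energy-AR}, since $\tfrac12Z_0+\rho_c^2=\tfrac12\mathcal D_0$.

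\textbf{Consequences.} Drop $Z_T$ and solve the quadratic inequality $x^2-(s+\ell)x-\tfrac12\mathcal D_0\le0$ for $x=G^{1/2}\ge0$. Its positive root is $\mathcal G$, which gives the first line of \eqref{eq:grad-AR}. For the second line, multiply \eqref{eq:energy-AR} by $2$ and use $2(s+\ell)x\le(s+\ell)^2+x^2$. This yields $2G+Z_T\le(s+\ell)^2+x^2+\mathcal D_0$, and cancelling $x^2=G$ leaves $G+Z_T\le\mathcal D_0+(s+\ell)^2$.

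\textbf{Comparison.} We must show $\mathcal G\le\ell+\sqrt{s^2+\mathcal D_0+\ell^2}$, using $\mathcal D=s^2+\mathcal D_0$. Multiplying by $2$, this becomes
\[
s-\ell+\sqrt{(s+\ell)^2+2\mathcal D_0}\le2\sqrt{s^2+\ell^2+\mathcal D_0}.
\]
The strategy is to move $s-\ell$ to the right and square. If the right side $2\sqrt{\cdot}-(s-\ell)$ is nonnegative (it is, since $2\sqrt{s^2+\ell^2}\ge|s-\ell|$), squaring gives the equivalent inequality
\[
(s+\ell)^2+2\mathcal D_0\le4(s^2+\ell^2+\mathcal D_0)-4(s-\ell)R+(s-\ell)^2,\qquad R:=\sqrt{s^2+\ell^2+\mathcal D_0}.
\]
Simplifying, this reads $2(s-\ell)R\le2s^2+2\ell^2-2s\ell+\mathcal D_0$. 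Write $R^2=s^2+\ell^2+\mathcal D_0$. Then the claim is $(s-\ell)R\le R^2-s\ell-\tfrac12\mathcal D_0$.

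If $s\le\ell$, the claim is immediate, since the left side is $\le0$ and the right side is $\ge (s-\ell)^2+\tfrac12\mathcal D_0\ge0$. If $s>\ell$, we square again and check the resulting polynomial inequality in $s,\ell,\mathcal D_0$.

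I expect this algebra, and in particular tracking when equality holds, to be the main obstacle. Equality should reduce to $\ell=\mathcal D_0=0$, where both sides equal $s$. A cleaner route to try first is to note that $\mathcal G$ is the root of $x^2=(s+\ell)x+\tfrac12\mathcal D_0$, while the competing bound $y=\ell+\sqrt{\mathcal D+\ell^2}$ is the root of $y^2=2\ell y+s^2+\mathcal D_0$. Since the quadratic $x^2-(s+\ell)x-\tfrac12\mathcal D_0$ is increasing beyond its root, it suffices to show $y^2-(s+\ell)y-\tfrac12\mathcal D_0\ge0$. Substituting $y^2$, this is $(\ell-s)y+s^2+\tfrac12\mathcal D_0\ge0$. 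For $s\le\ell$ this is clear. For $s>\ell$ it reduces to $y\le(s^2+\tfrac12\mathcal D_0)/(s-\ell)$, a one-line squaring check whose slack vanishes only when $\ell=\mathcal D_0=0$.
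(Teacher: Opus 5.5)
Your proposal is correct and follows the paper's proof. The energy inequality via $e_\sigma=-A\gradx e-\Ahalf\widehat S$, the quadratic solve and the doubling step are the same as in the paper. Both of your comparison routes reduce to the paper's appendix check $(s-\ell)^2R^2\le(p+\tfrac12\mathcal D_0)^2$, where $p=s^2-s\ell+\ell^2$, $(s-\ell)^2=p-s\ell$ and $R^2=p+s\ell+\mathcal D_0$.

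The difference of the two sides is $\tfrac14\mathcal D_0^2+s^2\ell^2+s\ell\mathcal D_0\ge0$. This is the ``one-line'' squaring you left undone, and for $s>\ell\ge0$ it vanishes only if $\ell=\mathcal D_0=0$. For $s\le\ell$, strictness also holds, since $(\ell-s)y+s^2+\tfrac12\mathcal D_0=0$ forces $s=\ell=\mathcal D_0=0$.
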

\begin{proof}
By \eqref{eq:fos-b}, $e_\sigma=-A\gradx e-\Ahalf\widehat S$, so that
$-(e_\sigma,\gradx e)_{\LQ}$ equals $G+(\widehat S,\Ahalf\gradx e)_{\LQ}$. Inserting this into
\eqref{eq:direct-step2}, dropping the convection--reaction term and using Step~3
gives
\[
G+\tfrac12Z_T\le-(\widehat S,\Ahalf\gradx e)_{\LQ}+(\widehat\Res,e)_{\LQ}
+\tfrac12Z_0+\rho_c^2(\bv).
\] The first term is at most $s\,G^{1/2}$, the second at
most $\ell\,G^{1/2}$ as in Step~4; this is \eqref{eq:energy-AR}. Dropping $Z_T$ and
solving the quadratic inequality in $G^{1/2}$ gives the first bound in
\eqref{eq:grad-AR}. Doubling \eqref{eq:energy-AR} gives
$G+Z_T\le2(s+\ell)G^{1/2}-G+\mathcal D_0\le(s+\ell)^2+\mathcal D_0$. The comparison
with \eqref{eq:grad-sharp} is elementary algebra, given in \Cref{app:comparison}.
\end{proof}

\begin{remark}[relation to the majorant of Apushkinskaya and Repin]\label{rem:AR}
Let $A=I$, $b=c=0$, the setting of \cite{ApushkinskayaRepin2022}, and
$\divx\tau\in\LQ$. Put $\widehat\Res_0:=f-\partial_t\widehat v-\divx\tau$ and
\begin{equation}\label{eq:mu-star}
\mu^\ast:=\mathbf 1_{\{v\le g\}}\,\pos{-\widehat\Res_0}.
\end{equation}
Then $(v,\tau,\mu^\ast)$ satisfies the hypotheses of \Cref{thm:direct},
$\rho_c(v,\tau,\mu^\ast)=0$ because $\mu^\ast$ vanishes where $v>g$, and
$\widehat\Res=\widehat\Res_0$ on $\{v>g\}$, $\widehat\Res=\pos{\widehat\Res_0}$ on
$\{v\le g\}=\{\widehat v=g\}$. With $p:=-\tau$, $s=\norm{p-\gradx\widehat v}{\LQ}$
and $\norm{\widehat\Res}{\LQ}$ equals the norm of the residual
$\mathcal F_f(\widehat v,p)$ of \cite[Thm.~2.1]{ApushkinskayaRepin2022}, in which the
residual is replaced by its positive part on the coincidence set of the
approximation. That theorem, with its weight parameter equal to $1$, reads
$\norm{\gradx e}{\LQ}^2+\norm{e(T)}{\LO}^2\le\norm{e(0)}{\LO}^2+(s+\ell)^2$, which is
the second bound of \eqref{eq:grad-AR} with $\rho_c=0$; the first bound,
$\norm{\gradx e}{\LQ}\le\mathcal G$, is never weaker for the gradient alone. In
this sense \Cref{cor:grad-AR} contains the heat-operator majorant;
\Cref{cor:grad-sharp} alone does not (\Cref{app:comparison}).
\Cref{thm:direct,cor:grad-AR} add (i) general coefficients $A,b,c$; (ii) a free
multiplier whose only cost is the computable term $2\rho_c^2$, so that a trained
multiplier with nonzero complementarity defect is certified as it is, while
$\mu^\ast$ remains available as an a~posteriori choice; and (iii) the flux bound
\eqref{eq:direct-flux}, which a state majorant does not provide.
\end{remark}

\begin{corollary}[direct weak-norm certificate]\label{cor:direct-weak}
Under the hypotheses of \Cref{thm:direct} with $b=c=0$, the graph term of
\eqref{eq:wnorm} for $\bu-\widehat{\bv}$ equals $\|\widehat\Res\|_{\LQ}$ by
\eqref{eq:direct-graph}, and for every $\theta\in(0,1)$,
\begin{equation}\label{eq:direct-weak}
\begin{aligned}
\wnorm{\bu-\widehat{\bv}}^2
\le{}&
\max\Bigl\{\frac{1}{\alpha(1-\theta)},\|A\|_\infty,1\Bigr\}
\Bigl(\mathcal D+\frac{\mathcal L}{\theta}\Bigr)\\
&+\norm{u_0-\widehat v(0)}{\LO}^2+\norm{\widehat\Res}{\LQ}^2 .
\end{aligned}
\end{equation}
\end{corollary}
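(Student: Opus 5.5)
The plan is to expand the weak norm \eqref{eq:wnorm} of $\bu-\widehat{\bv}=(e,e_\sigma,\lambda-\mu)$ term by term and bound each piece by the left-hand side of \eqref{eq:direct} or by known data. With $b=c=0$, \eqref{eq:direct-graph} reads $\divtx(e,e_\sigma)-(\lambda-\mu)=\widehat\Res$. The graph term of \eqref{eq:wnorm} is therefore exactly $\norm{\widehat\Res}{\LQ}^2$. The initial term $\norm{e(0)}{\LO}^2$ equals $\norm{u_0-\widehat v(0)}{\LO}^2$, because $e(0)=u_0-\widehat v(0)$, as noted at the start of the proof of \Cref{thm:direct}. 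These two contributions appear verbatim on the right of \eqref{eq:direct-weak}. It remains to control
\[
\norm{e}{\V}^2+\norm{e(T)}{\LO}^2+\norm{e_\sigma}{\LQ}^2 .
\]

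For these three terms, fix $\theta\in(0,1)$ and use \eqref{eq:Anorm}. This gives $\norm{e}{\V}^2\le\alpha^{-1}\norm{\Ahalf\gradx e}{\LQ}^2=\frac{1}{\alpha(1-\theta)}\,(1-\theta)G$ and $\norm{e_\sigma}{\LQ}^2\le\|A\|_\infty F$, while $Z_T$ carries weight $1$. With $M:=\max\{\frac{1}{\alpha(1-\theta)},\|A\|_\infty,1\}$, the sum is at most $M\bigl(F+(1-\theta)G+Z_T\bigr)$. The left-hand side of \eqref{eq:direct} is exactly $F+(1-\theta)G+Z_T$, and its right-hand side equals $\mathcal D+\mathcal L/\theta$ by the definitions of $\mathcal D$ and $\mathcal L$. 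Combining this with the two exact terms from the first paragraph yields \eqref{eq:direct-weak}.

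The only point needing care is bookkeeping. One check is that the graph-term identity is legitimate in $\LQ$; this holds because both $\bu$ and $\widehat{\bv}$ lie in $U$, as argued in Step~2 of \Cref{thm:direct}. The other check is that $\theta<1$ is required, so that the factor $\frac{1}{1-\theta}$ is finite. I expect no real obstacle: the estimate is a weighted recombination of \Cref{thm:direct}, with no new inequality beyond \eqref{eq:Anorm}.
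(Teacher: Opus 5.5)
Your proposal is correct and is essentially the paper's own argument: \eqref{eq:Anorm} gives $\norm{\gradx e}{\LQ}^2+\norm{e_\sigma}{\LQ}^2+Z_T\le M\bigl((1-\theta)G+F+Z_T\bigr)$, then \eqref{eq:direct} is applied, and the initial term $Z_0$ and the graph term $\norm{\widehat\Res}{\LQ}^2$ are added exactly. Your two bookkeeping checks are the right ones: that both triples lie in $U$, so the graph identity holds in $\LQ$, and that $\theta<1$ is needed.
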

\begin{proof}
By \eqref{eq:Anorm}, $\norm{\gradx e}{\LQ}^2+\norm{e_\sigma}{\LQ}^2+Z_T\le
M\bigl((1-\theta)G+F+Z_T\bigr)$ with $M$ the maximum in \eqref{eq:direct-weak};
apply \eqref{eq:direct} and add $Z_0$ and the graph term.
\end{proof}

\begin{remark}[original output and the estimator route]\label{rem:direct-vs-estimator}
The flux bound \eqref{eq:direct-flux} applies to the network flux as it is. For the
original state, $\gradx(u-v)=\gradx e+\gradx w$ gives
$\norm{\Ahalf\gradx(u-v)}{\LQ}\le\mathcal G+\norm{\Ahalf\gradx w}{\LQ}$; the
penetration also enters $\widehat\Res$ and $\widehat S$ through $\partial_tw$ and
$\gradx w$. \Cref{app:estimator} gives the alternative route of
\cite{CDFFK2025}, with a weight $\Lambda\ge c_F^2$, an energy norm
$\enorm{\cdot}$ and a residual estimator $\rho_r$ defined there: a reliability estimate for arbitrary admissible triples, which
also covers the unprojected output in the weak norm, and the projected bound
$\enorm{\bu-\widehat{\bv}}^2\le6(\rho_r^2(\widehat{\bv})+\rho_c^2(\bv))$
\eqref{eq:projected-direct}. Against the latter, in the flux bound \eqref{eq:direct-flux} the constants on
$\|\widehat S\|^2$ and the initial mismatch drop from $6$ to $1$, that on
$\rho_c^2$ from $6$ to $2$, and the weight of $\|\widehat\Res\|^2$ from
$6\Lambda/\alpha$ to $c_F^2/\alpha$;
with $\Lambda=\diam(\Omega)^2$ this is a factor $6\pi^2\approx59$ on the interval and
$24\pi^2\approx237$ on the unit square. The estimator route is used below only for
comparison (\Cref{tab:disk-same-partition}).
\end{remark}
\section{Neural approximation, training and attainability}\label{sec:neural}

\subsection{Neural triples and admissibility}
A $\tanh$ network with $L$ layers, widths $n_0=1+d,n_1,\dots,n_L$, weights
$W_k\in\R^{n_k\times n_{k-1}}$ and biases $b_k\in\R^{n_k}$ is the map
$N:\R^{1+d}\to\R^{n_L}$, $z^0=z$, $z^k=\tanh(W_kz^{k-1}+b_k)$ for $1\le k\le L-1$,
$N(z)=W_Lz^{L-1}+b_L$; we write $\NNc(L,\mathbf n)$ for the set of such maps. Every
such $N$ is $C^\infty$, and $N$ and all its derivatives are bounded on
$\overline Q$. A \emph{boundary mask} is a function $B:\overline\Omega\to[0,\infty)$
with $B|_{\partial\Omega}=0$ and either
\begin{enumerate*}[label=(M\arabic*)]
\item\label{m1} $B\in W^{1,\infty}(\Omega)$, sufficient for admissibility and the
continuum certificate, or
\item\label{m2} $B\in W^{2,\infty}(\Omega)$, required in addition for the finite
evaluation of \Cref{sec:quad}.
\end{enumerate*}
We therefore avoid nonsmooth distance masks and use the polynomial box mask
$B(x)=\prod_{j=1}^{d}4(x_j-a_j)(b_j-x_j)/(b_j-a_j)^2$, which satisfies \ref{m2}
and $\max_{\overline\Omega}B=1$.

\begin{definition}[neural triple]\label{def:neural-triple}
Given a mask $B$ satisfying \ref{m1} and networks $N_\theta:\R^{1+d}\to\R$,
$S_\phi:\R^{1+d}\to\R^d$, $M_\psi:\R^{1+d}\to\R$, set
\begin{equation}\label{eq:neural-triple}
u_\theta:=B\,N_\theta,
\qquad
\sigma_\phi:=S_\phi,
\qquad
\lambda_\psi:=\left(M_\psi\right)^2 ,
\qquad
\uTh:=(u_\theta,\sigma_\phi,\lambda_\psi).
\end{equation}
\end{definition}

The flux network is free; it is not $-A\gradx u_\theta$. The squared head enforces
$\lambda_\psi\ge0$ exactly. A $\tanh$ network is analytic and cannot vanish on an
open set without vanishing identically; this does not affect admissibility or
$L^2$ attainability, and $(\relu M_\psi)^2$ provides an exact open zero set if
desired, at the price of $C^{1,1}$ instead of $C^\infty$ regularity.

\begin{proposition}[neural admissibility]\label{prop:admissibility}
Let \Cref{ass:g-reg} hold and let $\uTh$ be as in \Cref{def:neural-triple}. Then
$\uTh\in\mathbb{A}$, $u_\theta\in\V\cap H^1((0,T);\LO)$, and
$u_\theta,\gradx u_\theta,\partial_tu_\theta,\sigma_\phi,\divx\sigma_\phi,
\lambda_\psi\in L^\infty(Q)$. Consequently \Cref{thm:direct} and its corollaries
apply to every neural triple, with arbitrary parameters.
\end{proposition}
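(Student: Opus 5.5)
We verify the three conditions of \Cref{def:admissible} directly from the smoothness of $\tanh$ networks and the mask hypothesis \ref{m1}. Then we invoke \Cref{lem:positive-part} for \ref{a3}.

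\emph{Regularity of the components.} Each of $N_\theta$, $S_\phi$, $M_\psi$ is $C^\infty$ on $\R^{1+d}$ with all derivatives bounded on the compact set $\overline Q$. So $\sigma_\phi$, $\divx\sigma_\phi$, $M_\psi$ and hence $\lambda_\psi=M_\psi^2$ lie in $L^\infty(Q)$, and $\lambda_\psi\ge0$ pointwise. For the state, $B\in W^{1,\infty}(\Omega)$ is time-independent, so $u_\theta=BN_\theta$ is bounded. By the product rule for a Lipschitz function times a smooth one,
\[
\gradx u_\theta=N_\theta\gradx B+B\gradx N_\theta,\qquad \partial_tu_\theta=B\,\partial_tN_\theta,
\]
and both are in $L^\infty(Q)$. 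Since $B|_{\partial\Omega}=0$ and $B$ is Lipschitz, $u_\theta(t)\in W^{1,\infty}(\Omega)$ has zero trace, so $u_\theta(t)\in H^1_0(\Omega)$ for every $t$. Hence $u_\theta\in\V$, and $\partial_tu_\theta\in L^\infty\subset\LQ$ gives $u_\theta\in H^1((0,T);\LO)$. The one point needing care is that a Lipschitz function on a Lipschitz domain vanishing on $\partial\Omega$ belongs to $H^1_0(\Omega)$. This is standard: the trace operator on $W^{1,\infty}\subset H^1$ coincides with the boundary restriction, and the kernel of the trace is $H^1_0$ on Lipschitz domains. I expect this to be the only step requiring an explicit citation.

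\emph{Admissibility.} For \ref{a1}, $\divtx(u_\theta,\sigma_\phi)-\lambda_\psi=\partial_tu_\theta+\divx\sigma_\phi-\lambda_\psi\in L^\infty(Q)\subset\LQ$, and $\sigma_\phi\in\LQ^d$. Also $\lambda_\psi\in\LQ\subset\Vstar$, so $\uTh\in U$. Condition \ref{a2} was shown above. For \ref{a3}, \Cref{ass:g-reg} together with $u_\theta\in\V\cap H^1((0,T);\LO)$ lets \Cref{lem:positive-part} give $\pos{g-u_\theta}\in\W$. Thus $\uTh\in\mathbb A$ with $u_\theta\in\V\cap H^1((0,T);\LO)$, which are exactly the hypotheses of \Cref{thm:direct}, \Cref{lem:projection} and the corollaries of \Cref{sec:direct}. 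The final consequence follows, and no property of the parameters is used anywhere.
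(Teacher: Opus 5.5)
Your proof is correct and follows the paper's argument step by step. Boundedness of the networks and their first derivatives on $\overline Q$, together with $B\in W^{1,\infty}(\Omega)$ and the product rule, gives the $L^\infty$ claims; slicewise $H^1_0$ membership gives $u_\theta\in\V\cap H^1((0,T);\LO)$ and (A1); the squared head gives (A2); and \Cref{lem:positive-part} gives (A3). Your remark on the trace of a Lipschitz function vanishing on $\partial\Omega$ only makes explicit a step the paper states without citation.
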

\begin{proof}
The networks and their first derivatives are bounded on $\overline Q$, and
$B\in W^{1,\infty}(\Omega)$, so $u_\theta=BN_\theta$,
$\gradx u_\theta=N_\theta\gradx B+B\gradx N_\theta$, $\partial_tu_\theta=B\,\partial_tN_\theta$,
$\sigma_\phi$, $\divx\sigma_\phi$ and $\lambda_\psi$ are in $L^\infty(Q)\subset\LQ$.
For a.e.\ $t$, $u_\theta(t,\cdot)$ is the product of a $W^{1,\infty}$ function
vanishing on $\partial\Omega$ with a smooth function, hence in $H^1_0(\Omega)$; thus
$u_\theta\in\V\cap H^1((0,T);\LO)$, and
$\divtx(u_\theta,\sigma_\phi)-\lambda_\psi\in L^\infty(Q)$ gives \ref{a1}.
\ref{a2} holds since $\lambda_\psi=M_\psi^2\ge0$, and \ref{a3} follows from
\Cref{lem:positive-part}.
\end{proof}

\subsection{Training functional versus certificate}
For a neural triple define
\begin{equation}\label{eq:residuals}
\begin{gathered}
R_{\rm PDE}:=f-\divtx(u_\theta,\sigma_\phi)-b\cdot\gradx u_\theta-cu_\theta+\lambda_\psi,\\
R_\sigma:=\Ainv\sigma_\phi+\Ahalf\gradx u_\theta,
\qquad
R_0:=u_0-u_\theta(0,\cdot),
\end{gathered}
\end{equation}
$R_{\rm obs}:=\pos{g-u_\theta}$ and $R_{\rm comp}:=\lambda_\psi\pos{u_\theta-g}$. The
networks are trained on the sampled surrogate
\begin{equation}\label{eq:loss}
\begin{aligned}
\mathcal L_{\rm train}(\Theta)
={}&\omega_r\widehat{\norm{R_{\rm PDE}}{\LQ}^2}
+\omega_\sigma\widehat{\norm{R_\sigma}{\LQ}^2}
+\omega_0\widehat{\norm{R_0}{\LO}^2}\\
&+\omega_p\widehat{\norm{R_{\rm obs}}{\LQ}^2}
+\omega_c\widehat{\textstyle\int_QR_{\rm comp}},
\end{aligned}
\end{equation}
where $\widehat{\ \cdot\ }$ denotes evaluation on a finite collocation set and
$\omega_\bullet>0$ are weights; only first derivatives of the networks are needed.
Three gaps separate $\mathcal L_{\rm train}$ from an error bound: a small sampled
value is compatible with a large integral unless the quadrature error is
controlled, which \Cref{sec:quad} does; the certificate has specific weights that
no reweighting of \eqref{eq:loss} reproduces, and it acts on the projected residuals
\eqref{eq:direct-residuals}, not on $R_{\rm PDE}$ and $R_\sigma$; and it holds only for
admissible triples, so the boundary condition and the sign of the multiplier are
enforced structurally rather than penalized. Certification never uses the training
points or modifies the parameters.

\subsection{Attainability}
By \Cref{thm:direct,cor:grad-AR}, if the right-hand side of \eqref{eq:direct}
tends to zero along a sequence of neural triples, the flux errors and the gradient
errors of the projected states tend to zero. We show that such sequences exist.
Throughout this subsection we assume the following regularity.

\begin{assumption}[regularity of the exact solution]\label{ass:reg}
$u\in\V\cap H^1(Q)$, $\sigma\in\LQ^d$ with $\divx\sigma\in\LQ$, $\lambda\in\LQ$, and
$g\in H^1(Q)$ in addition to \Cref{ass:data,ass:g-reg}.
\end{assumption}

The residual and complementarity terms are continuous in obvious topologies. The
penetration is not obviously so: $\nabla_{t,x}\pos{g-v}$ is the gradient of $g-v$ on
the $v$-dependent set where the constraint is violated, and there is no reason for
it to be small merely because $v-u$ is small. The mechanism that makes it converge
is that on the contact set $\nabla_{t,x}(u-g)=0$ a.e., so the gradient of the
penetration is controlled by the gradient of the error there, while off the contact
set the violation set has vanishing measure.

\begin{lemma}[penetration consistency]\label{lem:penetration}
Let \Cref{ass:reg} hold, let $v_n\in\V\cap H^1(Q)$ with $v_n\to u$ in $H^1(Q)$, and
$w_n:=\pos{g-v_n}$. Then
$\norm{w_n}{\LQ}+\norm{\nabla_{t,x}w_n}{\LQ}+\norm{w_n(0)}{\LO}+\norm{w_n(T)}{\LO}\to0$.
\end{lemma}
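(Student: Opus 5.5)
Set $z_n:=g-v_n$ and $z:=g-u\le0$, so that $w_n=\pos{z_n}$ and $z_n\to z$ in $H^1(Q)$. Since $z\mapsto\pos{z}$ is $1$-Lipschitz, $|w_n|=|\pos{z_n}-\pos{z}|\le|z_n-z|=|u-v_n|$. This gives $\norm{w_n}{\LQ}\le\norm{u-v_n}{\LQ}\to0$ at once. The traces are handled the same way. The trace map $H^1(Q)\to\LO$ at $t=0$ and at $t=T$ is bounded, and $w_n\in H^1(Q)$ by Stampacchia's theorem applied on $Q$ (as in \Cref{lem:positive-part}). Hence $\norm{w_n(s)}{\LO}\le\norm{(u-v_n)(s)}{\LO}\le C\norm{u-v_n}{H^1(Q)}\to0$ for $s\in\{0,T\}$, using again the pointwise bound $w_n(s)\le|(u-v_n)(s)|$, which holds because traces commute with the Lipschitz map $\pos{\cdot}$.

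The real content is the gradient. By Stampacchia, $\nabla_{t,x}w_n=\mathbf 1_{\{z_n>0\}}\nabla_{t,x}z_n$ a.e. We write $\nabla_{t,x}z_n=\nabla_{t,x}z+\nabla_{t,x}(u-v_n)$, which gives
\[
\norm{\nabla_{t,x}w_n}{\LQ}\le\norm{\nabla_{t,x}(u-v_n)}{\LQ}+\norm{\mathbf 1_{\{z_n>0\}}\nabla_{t,x}z}{\LQ}.
\]
The first term tends to zero, so it remains to show $\int_Q\mathbf 1_{\{z_n>0\}}|\nabla_{t,x}z|^2\to0$. To do this we split $Q$ into the contact set $\{z=0\}=\{u=g\}$ and its complement $\{z<0\}$. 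On $\{z=0\}$ we have $\nabla_{t,x}z=0$ a.e., again by Stampacchia's theorem: the gradient of an $H^1$ function vanishes a.e.\ on its level sets. That part of the integral is therefore zero, and no control of the moving set $\{z_n>0\}$ is needed there.

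On $\{z<0\}$ the integrand is $\mathbf 1_{\{z_n>0,\,z<0\}}|\nabla_{t,x}z|^2$. Because $z_n\to z$ in $\LQ$, Chebyshev's inequality gives, for every $\delta>0$,
\[
|\{z_n>0,\ z<-\delta\}|\le|\{|z_n-z|>\delta\}|\le\delta^{-2}\norm{z_n-z}{\LQ}^2\to0.
\]
Absolute continuity of the integral of $|\nabla_{t,x}z|^2\in L^1(Q)$ then makes the contribution of $\{z_n>0,\,z<-\delta\}$ vanish as $n\to\infty$. The remaining piece is bounded by $\int_{\{-\delta\le z<0\}}|\nabla_{t,x}z|^2$. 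This tends to zero as $\delta\downarrow0$ by dominated convergence, because the sets $\{-\delta\le z<0\}$ decrease to the empty set. To conclude, we choose $\delta$ first and then $n$ large.

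The main obstacle is exactly this gradient term: the violation sets $\{v_n<g\}$ depend on $n$ and need not shrink inside the contact set. The argument above avoids this by using $\nabla_{t,x}(u-g)=0$ a.e.\ on $\{u=g\}$, which is precisely the mechanism described before the lemma. It requires $g\in H^1(Q)$ and $u\in H^1(Q)$ from \Cref{ass:reg}, so that $z\in H^1(Q)$ and Stampacchia's theorem applies in all $1+d$ variables at once.
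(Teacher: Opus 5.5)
Your proof is correct and follows essentially the same route as the paper. The common ingredients are the $1$-Lipschitz bound $|w_n|\le|u-v_n|$ for the zeroth-order and trace terms, the vanishing of $\nabla_{t,x}(u-g)$ a.e.\ on the contact set, and, off the contact set, a level-set split near $\{u=g\}$ combined with Chebyshev's inequality and absolute continuity of $\int|\nabla_{t,x}(g-u)|^2$. The only difference is bookkeeping: you apply the triangle inequality to $\nabla_{t,x}z_n=\nabla_{t,x}z+\nabla_{t,x}(u-v_n)$ before splitting, whereas the paper first splits $\norm{\nabla_{t,x}w_n}{\LQ}^2$ into the parts of $\{v_n<g\}$ inside and outside the contact set and shows that the outer part has measure tending to zero.
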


\begin{proof}
Let $\Gamma:=\{u=g\}$, $\Gamma^c:=Q\setminus\Gamma=\{u>g\}$ and $S_n:=\{v_n<g\}$.
\emph{(i)} Since $z\mapsto z^+$ is $1$-Lipschitz and $\pos{g-u}=0$,
$|w_n|\le|u-v_n|$ a.e., which gives the zeroth-order terms, at $t=0,T$ through
$H^1(Q)\cap\V\subset H^1((0,T);\LO)\hookrightarrow C^0([0,T];\LO)$.
\emph{(ii)} Since $u-g\in H^1(Q)$, $\nabla_{t,x}(u-g)=0$ a.e.\ on $\Gamma$
\cite[Lemma~7.7]{GilbargTrudinger}.
\emph{(iii)} On $S_n\cap\Gamma^c$ we have $v_n<g<u$, so
$S_n\cap\Gamma^c\subseteq E_n:=\{(t,x)\in\Gamma^c: |u-v_n|>u-g\}$. For $\varepsilon>0$,
$|E_n|\le|\{0<u-g\le\varepsilon\}|+|\{|u-v_n|>\varepsilon\}|$; the second term
tends to $0$ by Chebyshev's inequality, and the first to
$|\{u-g=0\}\cap\Gamma^c|=0$ as $\varepsilon\downarrow0$. Hence $|E_n|\to0$.
\emph{(iv)} By \eqref{eq:stampacchia},
$\norm{\nabla_{t,x}w_n}{\LQ}^2=\int_{S_n\cap\Gamma}|\nabla_{t,x}(g-v_n)|^2
+\int_{S_n\cap\Gamma^c}|\nabla_{t,x}(g-v_n)|^2=:\mathrm T_1+\mathrm T_2$. On $\Gamma$,
(ii) gives $\nabla_{t,x}(g-v_n)=\nabla_{t,x}(u-v_n)$, so
$\mathrm T_1\le\norm{\nabla_{t,x}(u-v_n)}{\LQ}^2\to0$. By (iii),
$\mathrm T_2\le2\int_{E_n}|\nabla_{t,x}(g-u)|^2+2\norm{\nabla_{t,x}(u-v_n)}{\LQ}^2\to0$,
by absolute continuity of the integral of $|\nabla_{t,x}(g-u)|^2\in L^1(Q)$.
\end{proof}

\begin{lemma}[$L^2$ complementarity]\label{lem:l2-complementarity}
Under \Cref{ass:reg}, $\lambda(u-g)=0$ a.e.\ in $Q$ and $(\lambda,u-g)_{\LQ}=0$.
\end{lemma}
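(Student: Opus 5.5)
Under \Cref{ass:reg}, $\lambda\in\LQ$ and $u-g\in\LQ$, so the product $\lambda(u-g)$ lies in $L^1(Q)$ and $(\lambda,u-g)_{\LQ}=\int_Q\lambda(u-g)$ makes sense. The plan is to show two sign facts, $\lambda\ge0$ a.e.\ and $(\lambda,u-g)_{\LQ}\le 0$, and combine them with $u\ge g$.

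\emph{Step 1: nonnegativity of $\lambda$.} I test \eqref{eq:VI-canonic} with $v=u+\varphi$ for $0\le\varphi\in\V$. Then $v\ge u\ge g$, so $v\in K_\V$, and $\pair{\lambda}{\varphi}\ge0$. Since $\lambda\in\LQ$, the pairing is $(\lambda,\varphi)_{\LQ}$. Nonnegative elements of $\V$ (for instance nonnegative functions in $C_c^\infty(Q)$) are dense in the nonnegative cone of $\LQ$, so $\lambda\ge0$ a.e.

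\emph{Step 2: the reverse inequality.} I want to test with $v=g$, but $g$ need not lie in $\V$, because \eqref{eq:compat} only gives $g\le0$ on the lateral boundary. I therefore use $v=\max\{g,0\}$ shifted appropriately, or more simply $v=u-\pos{u-g}\wedge$-type truncations. The cleanest choice is $v=u-\varepsilon(u-g)^{+}_k$, where $(u-g)^{+}_k:=\min\{(u-g),k\}$ is truncated so that $v\in\V$. Since $u\ge g$, we have $u-g=\pos{u-g}$.

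The key point is to ensure $u-g\in\V$ in some usable form. On the lateral boundary $u=0$ and $g\le0$, so $u-g\ge0$ there and $u-g$ need not vanish. I therefore take $v:=u-\min\{u-g,\,u^+\}$ instead, which equals $\max\{g,\,u-u^+\}=\max\{g,\min\{u,0\}\}$. This $v$ satisfies $v\ge g$. Moreover $v=\max\{g,\min\{u,0\}\}$ vanishes on the lateral boundary, because $\min\{u,0\}=0$ and $g\le0$ there. By Stampacchia's truncation it lies in $L^2(H^1)$, hence in $\V$ and in $K_\V$.

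Testing gives $(\lambda,u-v)_{\LQ}\le0$, with $u-v=\min\{u-g,u^+\}\ge0$. Combined with $\lambda\ge0$, this yields $\lambda\min\{u-g,u^+\}=0$ a.e. Where $u\le0$, the constraint $u\ge g$ together with the fact that $u=g$ on the contact set needs separate handling. I expect this to be the main obstacle. To remove the restriction I will replace $u^+$ by $u^+ + \eta_j$, where $\eta_j\in H^1_0(\Omega)$, $0\le\eta_j$, and $\eta_j\uparrow\infty$ on compacts of $\Omega$ (e.g.\ $\eta_j=jB$). The modified test function is then $v_j=u-\min\{u-g,\,u^++jB\}$. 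It lies in $K_\V$ by the same argument, since $jB$ vanishes on $\partial\Omega$.

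\emph{Step 3: conclusion.} From Step 2 we get $\lambda\min\{u-g,u^++jB\}=0$ a.e.\ for every $j$. Letting $j\to\infty$ gives $\lambda(u-g)=0$ a.e.\ in $Q$, because $B>0$ in $\Omega$ and $u-g\ge0$. Integrating this pointwise identity gives $(\lambda,u-g)_{\LQ}=0$; the integral is finite because $\lambda,u-g\in\LQ$.
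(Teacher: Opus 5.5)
Your final argument is correct and is essentially the paper's proof. The paper tests \eqref{eq:VI-canonic} with $u-\min\{u-g,\varphi\}$ for nonnegative $\varphi\in C_c^\infty(Q)$ and exhausts $Q$ by countably many such $\varphi$. You instead take the cut-off $u^{+}+jB$ (any $B\in H^1_0(\Omega)$ positive a.e.\ in $\Omega$ will do) and let $j\to\infty$, and you also prove explicitly the sign $\lambda\ge0$, which the paper uses tacitly. The $u^{+}$ term is superfluous, because $\min\{u-g,jB\}$ already vanishes on the lateral boundary. The abandoned trial choices at the start of Step~2 should be removed; for example, $\min\{u-g,k\}$ does not vanish on the lateral boundary.
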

\begin{proof}
For nonnegative $\varphi\in C_c^\infty(Q)$ put $\eta=\min(u-g,\varphi)\in\V$; then
$0\le\eta\le u-g$ and $u-\eta\in K_\V$, so \eqref{eq:VI-canonic} gives
$\int_Q\lambda\eta\le0$, hence $=0$ as $\lambda,\eta\ge0$. A countable family of such
$\varphi$ shows $\lambda=0$ a.e.\ on $\{u>g\}$.
\end{proof}

\begin{lemma}[consistency of the direct certificate]\label{lem:consistency}
Let \Cref{ass:coeff,ass:reg} hold and let $\bv_n=(v_n,\tau_n,\mu_n)\in\mathbb A$ with
$v_n\in\V\cap H^1(Q)$ and
\[
v_n\to u\ \text{in }H^1(Q),
\quad
\tau_n\to\sigma,\ \divx\tau_n\to\divx\sigma\ \text{in }\LQ,
\quad
\mu_n\to\lambda\ \text{in }\LQ .
\]
Then $\widehat S_n$, $\widehat\Res_n$, $u_0-\widehat v_n(0)$ and $\rho_c(\bv_n)$, the
quantities of \Cref{thm:direct} for $\bv_n$, tend to zero. Consequently the
right-hand sides of \eqref{eq:direct-flux}, \eqref{eq:grad-sharp} and, for $b=c=0$,
\eqref{eq:direct-weak} tend to zero: the certificate has no consistency gap.
\end{lemma}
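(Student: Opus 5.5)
The plan is to write every projected quantity as an exact-solution quantity plus a perturbation that is controlled either by the assumed convergences or by \Cref{lem:penetration}. Write $w_n:=\pos{g-v_n}$ and $\widehat v_n=v_n+w_n$. By \Cref{lem:penetration}, $w_n\to0$ in $H^1(Q)$ and $w_n(0)\to0$ in $\LO$. Since $v_n\to u$ in $H^1(Q)$, it follows that $\widehat v_n\to u$ in $H^1(Q)$, so $\gradx\widehat v_n\to\gradx u$ and $\partial_t\widehat v_n\to\partial_tu$ in $\LQ$.

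\emph{Constitutive residual.} Using $\sigma=-A\gradx u$,
\[
\widehat S_n=\Ainv(\tau_n-\sigma)+\Ahalf\gradx(\widehat v_n-u),
\]
and $\norm{\Ainv}{}\le\alpha^{-1/2}$, $\norm{\Ahalf}{}\le\|A\|_\infty^{1/2}$ pointwise. Hence $\widehat S_n\to0$ in $\LQ$.

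\emph{PDE residual.} Under \Cref{ass:reg} all terms of \eqref{eq:fos-a} lie in $\LQ$. Subtracting \eqref{eq:fos-a} from the definition of $\widehat\Res_n$ gives
\[
\widehat\Res_n=\partial_t(u-\widehat v_n)+\divx(\sigma-\tau_n)+b\cdot\gradx(u-\widehat v_n)+c(u-\widehat v_n)-(\lambda-\mu_n).
\]
Each term tends to zero in $\LQ$, using $b,c\in L^\infty$ and the assumed convergences.

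\emph{Initial mismatch.} By \Cref{lem:projection}, $|u_0-\widehat v_n(0)|\le|u_0-v_n(0)|=|(u-v_n)(0)|$. The embedding $H^1((0,T);\LO)\hookrightarrow C^0([0,T];\LO)$, as in step (i) of \Cref{lem:penetration}, then gives convergence to zero. Here I use $u(0)=u_0$, and note that $\V\cap H^1(Q)\subset H^1((0,T);\LO)$.

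\emph{Complementarity defect.} This is the only step needing care. By \Cref{lem:l2-complementarity}, $(\lambda,\pos{u-g})_{\LQ}=(\lambda,u-g)_{\LQ}=0$. I would then split
\[
\rho_c^2(\bv_n)=(\mu_n-\lambda,\pos{v_n-g})_{\LQ}+(\lambda,\pos{v_n-g}-\pos{u-g})_{\LQ}.
\]
The first term is at most $\norm{\mu_n-\lambda}{\LQ}\norm{\pos{v_n-g}}{\LQ}$. This tends to zero because $\pos{v_n-g}$ is bounded in $\LQ$. The second term is at most $\norm{\lambda}{\LQ}\norm{v_n-u}{\LQ}\to0$, by the $1$-Lipschitz property of the positive part. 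Hence $\rho_c(\bv_n)\to0$.

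\emph{Conclusion.} Combining these, $\mathcal D_n\to0$ and $\mathcal L_n\to0$. Therefore the right-hand sides of \eqref{eq:direct-flux} and \eqref{eq:grad-sharp}, namely $\mathcal D+\mathcal L$ and $\sqrt{\mathcal L}+\sqrt{\mathcal D+\mathcal L}$, tend to zero. For $b=c=0$, fix any $\theta\in(0,1)$ in \eqref{eq:direct-weak}. Its right-hand side is then a fixed constant times $\mathcal D+\mathcal L/\theta$, plus the initial mismatch and $\norm{\widehat\Res_n}{\LQ}^2$, and all of these vanish in the limit.

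The main obstacle is already handled by \Cref{lem:penetration}: controlling $\partial_tw_n$ and $\gradx w_n$, which enter $\widehat S_n$ and $\widehat\Res_n$. What remains is routine bookkeeping, plus the observation that $\rho_c$ needs the $L^2$ complementarity of \Cref{lem:l2-complementarity}.
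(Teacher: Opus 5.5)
Your proposal is correct and follows the paper's proof essentially step for step. It uses the same decompositions of $\widehat S_n$ and $\widehat\Res_n$ driven by \Cref{lem:penetration}, the same bound from \Cref{lem:projection} for the initial mismatch, and \Cref{lem:l2-complementarity} for $\rho_c$. The only difference is minor: for the complementarity defect you use the exact identity $\rho_c^2(\bv_n)=(\mu_n-\lambda,\pos{v_n-g})_{\LQ}+(\lambda,\pos{v_n-g}-\pos{u-g})_{\LQ}$, which does not need $\mu_n\ge0$, whereas the paper bounds $\pos{v_n-g}\le(u-g)+|v_n-u|$ and uses $\mu_n\ge0$.
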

\begin{proof}
Let $w_n=\pos{g-v_n}$, so $u-\widehat v_n=(u-v_n)-w_n$, and $\nabla_{t,x}w_n\to0$ and
$w_n\to0$ in $\LQ$ by \Cref{lem:penetration}. Subtracting \eqref{eq:fos-a},
\[
\widehat\Res_n=\partial_t(u-\widehat v_n)+\divx(\sigma-\tau_n)
+b\cdot\gradx(u-\widehat v_n)+c(u-\widehat v_n)+(\mu_n-\lambda),
\]
and every term tends to zero in $\LQ$. By \eqref{eq:fos-b},
$\widehat S_n=\Ainv(\tau_n-\sigma)+\Ahalf\gradx(\widehat v_n-u)\to0$. By
\Cref{lem:projection},
$\norm{u_0-\widehat v_n(0)}{\LO}\le\norm{(u-v_n)(0)}{\LO}\to0$. Finally, since
$\pos{v_n-g}\le(u-g)+|v_n-u|$, $\mu_n\ge0$ and $(\lambda,u-g)_{\LQ}=0$ by
\Cref{lem:l2-complementarity},
$\rho_c^2(\bv_n)\le(\mu_n-\lambda,u-g)_{\LQ}+\norm{\mu_n}{\LQ}\norm{v_n-u}{\LQ}\to0$.
\end{proof}

Transferring this to neural classes requires a compatibility condition between the
exact solution and the multiplicative mask. It is specific to this architecture,
not a consequence of $u\in\V$, and holds for all manufactured solutions below.

\begin{assumption}[mask compatibility]\label{ass:mask}
There exists $\widehat u\in H^1(Q)$ with $u=B\,\widehat u$ a.e.\ in $Q$.
\end{assumption}

\begin{theorem}[existence of neural triples with small certificate]\label{thm:density}
Let \Cref{ass:coeff,ass:reg,ass:mask} hold and $\sigma\in H^1(Q)^d$. Let $\NNc_m$ be the set of
neural triples \eqref{eq:neural-triple} whose networks have depth and width at most
$m$, and let $\eta(\bv)^2:=\mathcal D+\mathcal L$ be the right-hand side of
\eqref{eq:direct-flux}. Then $\inf_{\bv\in\NNc_m}\eta(\bv)\to0$ as $m\to\infty$, and
there are $\bv_m\in\NNc_m$ whose flux errors and projected gradient errors tend to
zero.
\end{theorem}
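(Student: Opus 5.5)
The plan is to reduce everything to \Cref{lem:consistency}. It suffices to build neural triples $\bv_m=(B N_m,S_m,M_m^2)\in\NNc_m$ with
\[
B N_m\to u\ \text{in }H^1(Q),\qquad S_m\to\sigma\ \text{and}\ \divx S_m\to\divx\sigma\ \text{in }\LQ,\qquad M_m^2\to\lambda\ \text{in }\LQ .
\]
\Cref{lem:consistency} then gives $\eta(\bv_m)\to0$, hence $\inf_{\NNc_m}\eta\to0$. The flux errors tend to zero by \eqref{eq:direct-flux}. The projected gradient errors tend to zero by \eqref{eq:grad-sharp}, since the right-hand side of \eqref{eq:grad-sharp} is at most $2\eta$. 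Admissibility of every such triple is \Cref{prop:admissibility}, and $u_m:=BN_m\in\V\cap H^1(Q)$ because $B\in W^{1,\infty}$ and the network is smooth and bounded. Monotonicity of $\NNc_m$ in $m$ lets us pass from a sequence to the infimum statement.

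The approximation steps rest on universal approximation of $\tanh$ networks in Sobolev norms on the bounded Lipschitz set $\overline Q$. For smooth non-polynomial activations, networks are dense in $W^{1,p}$ on compacts (Hornik; see also \cite{Guehring2020,DeRyck2021}); $H^1(Q)$ functions are first approximated by $C^\infty(\overline Q)$ functions using Lipschitz extension, and these are then approximated in $C^1(\overline Q)$ by networks.

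\emph{State.} By \Cref{ass:mask}, $u=B\widehat u$ with $\widehat u\in H^1(Q)$. Choose networks $N_m\to\widehat u$ in $H^1(Q)$. Then
\[
\nabla_{t,x}(BN_m-u)=(\nabla_{t,x}B)(N_m-\widehat u)+B\,\nabla_{t,x}(N_m-\widehat u),
\]
and both terms tend to zero in $\LQ$ because $B,\nabla B\in L^\infty$.

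\emph{Flux.} Since $\sigma\in H^1(Q)^d$, approximating each component in $H^1(Q)$ gives convergence of $S_m$ and of $\divx S_m$ in $\LQ$.

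\emph{Multiplier.} This is the delicate step. Since $\lambda\ge0$ lies in $\LQ$, $\sqrt\lambda\in L^2(Q)$ only, but $L^2$ approximation of the square root is not enough. The bound $\|M^2-\lambda\|\le\|M-\sqrt\lambda\|\,\|M+\sqrt\lambda\|$ needs $L^4$ control, i.e.\ $\sqrt\lambda\in L^4$, which does hold because $\lambda\in L^2$. So I would approximate $\sqrt\lambda\in L^4(Q)$ by continuous functions in $L^4$, and then uniformly on $\overline Q$ by networks $M_m$. Then $M_m\to\sqrt\lambda$ in $L^4(Q)$, and by Hölder
\[
\|M_m^2-\lambda\|_{\LQ}\le\|M_m-\sqrt\lambda\|_{L^4}\,\|M_m+\sqrt\lambda\|_{L^4}\to0 .
\]

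The main obstacle, in my view, is bookkeeping the architecture constraint and not the analysis itself. The universal approximation theorems give shallow or fixed-depth networks of growing width. I must check that the approximants belong to $\NNc(L,\mathbf n)$ with depth and width at most $m$, and that a single depth works for each of the three networks. Taking depth two and width $\le m$ and padding as needed should suffice. The multiplier step via $L^4$ is the only place where the squared head requires more than $L^2$ density, and I would present it carefully.
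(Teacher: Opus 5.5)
Your proposal is correct and follows the paper's proof essentially step for step. Both arguments reduce the claim to \Cref{lem:consistency}, approximate $\widehat u$ and the components of $\sigma$ in $H^1(Q)$ by depth-two $\tanh$ networks via Hornik's $C^1$ density and a diagonal argument, transfer the state approximation through the product rule with $B\in W^{1,\infty}$, and handle the squared multiplier head by approximating $\sqrt\lambda$ in $L^4(Q)$ with the H\"older bound. Your aside that $\sqrt\lambda$ lies ``in $L^2(Q)$ only'' is superseded by your own correct observation that $\sqrt\lambda\in L^4(Q)$, which is exactly what the paper uses.
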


\begin{proof}
We construct the three components and invoke \Cref{lem:consistency}.
\emph{State.} Smooth functions on a neighbourhood of $\overline Q$ are dense in
$H^1(Q)$, and for $\tanh$, \cite[Thm.~3]{Hornik1991} gives simultaneous uniform
approximation of a smooth function and its first derivatives on $\overline Q$ by
networks in $\NNc(2,\mathbf n)\subset\NNc_m$ for $m\ge\max\{2,n_1\}$; since the
infimum over $\NNc_m$ is nonincreasing in $m$, a diagonal sequence gives
$N_{\theta_m}\to\widehat u$ in $H^1(Q)$. Since $B\in W^{1,\infty}(\Omega)$,
$\norm{u-BN_{\theta_m}}{H^1(Q)}\le C\norm{B}{W^{1,\infty}}\norm{\widehat u-N_{\theta_m}}{H^1(Q)}\to0$.
\emph{Flux.} The same argument, componentwise, gives $S_{\phi_m}\to\sigma$ in
$H^1(Q)^d$, hence $S_{\phi_m}\to\sigma$ and $\divx S_{\phi_m}\to\divx\sigma$ in $\LQ$.
\emph{Multiplier.} Since $0\le\lambda\in\LQ$, $q:=\sqrt\lambda\in L^4(Q)$. Only $L^4$
density is required, which follows from universal approximation in
$C(\overline Q)$; no Sobolev regularity of $\lambda$ is used, which matters because
$\lambda$ may be discontinuous, as in all benchmarks below. Choose $M_{\psi_m}\to q$
in $L^4(Q)$; then
$\norm{M_{\psi_m}^2-\lambda}{\LQ}\le\norm{M_{\psi_m}-q}{L^4}
(\norm{M_{\psi_m}-q}{L^4}+2\norm{q}{L^4})\to0$.
The triples are admissible by \Cref{prop:admissibility}, and
\Cref{lem:consistency} applies.
\end{proof}

\begin{remark}[scope of \Cref{thm:density}]\label{rem:rates}
The result is a qualitative existence statement under \Cref{ass:mask}. No neural
approximation rate is claimed, in particular because the exact multiplier may be
discontinuous across the free boundary. Nor does the theorem say anything about
trainability: it does not assert that a gradient-based optimiser finds such
triples. The expanding ellipse and the rising obstacle of \Cref{sec:numerics},
whose trained multipliers are numerically zero, are examples in which training did
not find them.
\end{remark}
\section{Verified evaluation and adaptive certification}\label{sec:quad}

\Cref{thm:direct} bounds the error by integrals. This section replaces them by
finite computations whose results are guaranteed upper bounds. It needs more
regularity than the continuum theory.

\begin{assumption}[certification regularity]\label{ass:Q}
\begin{enumerate*}[label=(Q\arabic*)]
\item\label{q1} The coefficients, $f$, $u_0$, $g$ and the first derivatives of $g$
admit computable enclosures on boxes (closed form in all benchmarks), and $g$ is
$W^{2,\infty}$ on each component of the complement of a known finite family
$\Gamma_{\rm data}$ of Lipschitz hypersurfaces ($\Gamma_{\rm data}=\emptyset$ in all
benchmarks);
\item\label{q2} the mask satisfies \ref{m2};
\item\label{q3} the activation is $\tanh$.
\end{enumerate*}
\end{assumption}

\subsection{Per-term densities of the projected triple}
Let $\gamma:=g-u_\theta$, so $w=\gamma^+$ and $\widehat v=u_\theta+\gamma^+$. By
\eqref{eq:stampacchia} the projected residuals are piecewise given by two smooth
branches,
\begin{equation}\label{eq:branches}
\begin{aligned}
&\text{on }\{\gamma<0\}: &&\widehat\Res=R_{\rm PDE},\qquad \widehat S=R_\sigma,\\
&\text{on }\{\gamma>0\}: &&\widehat\Res=f-\partial_tg-\divx\sigma_\phi-b\cdot\gradx g-cg+\lambda_\psi,\\
& &&\widehat S=\Ainv\sigma_\phi+\Ahalf\gradx g,
\end{aligned}
\end{equation}
and on $\{\gamma=0\}$ both branches agree a.e.\ because $\nabla_{t,x}\gamma=0$ a.e.\
there. The four quantities of \Cref{thm:direct} are integrals of nonnegative
densities: $\widehat\Res^2$, $|\widehat S|^2$ and the complementarity density
$\lambda_\psi\gamma^-$, $\gamma^-=\pos{u_\theta-g}$, over $Q$, and
$(u_0-\widehat v(0))^2$ over $\Omega$. A cell on which the enclosure of $\gamma$ has
fixed sign uses the corresponding branch; on a \emph{sign-unresolved} cell the hull
of both branches is enclosed. The data interface case is treated in the same way
with one branch per side.

\subsection{Cellwise upper sums}\label{sec:upper-sum}
The implementation uses box-shaped spatial domains; general domains would require
a validated treatment of cut cells. Let $\mathcal T$ be a finite family of closed
boxes with pairwise disjoint interiors covering $\overline Q$, and $\mathcal T_\Omega$
such a family covering $\overline\Omega$.

\begin{lemma}[cellwise upper sums]\label{lem:upper-sum}
Let $\varphi\ge0$ be bounded and measurable on $Q$, and let $\overline\varphi_K\ge\esssup_K\varphi$
for every $K\in\mathcal T$. Then $\int_Q\varphi\le\sum_{K\in\mathcal T}|K|\,\overline\varphi_K$.
The same holds on $\Omega$ with $\mathcal T_\Omega$.
\end{lemma}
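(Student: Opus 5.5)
The plan is to use additivity of the Lebesgue integral over the partition, followed by a monotone estimate on each cell. First, since the boxes in $\mathcal T$ are closed with pairwise disjoint interiors, any two of them meet only in a subset of a finite union of axis-parallel hyperplanes, which is a Lebesgue null set. Hence the family $\{K\cap Q\}_{K\in\mathcal T}$ covers $Q$ and overlaps only on a null set. Because $\varphi$ is bounded, measurable and nonnegative, it is integrable on the bounded set $Q$, and
\[
\int_Q\varphi=\sum_{K\in\mathcal T}\int_{K\cap Q}\varphi .
\]
Only finitely many cells occur, so no convergence question arises.

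Second, on each cell $\varphi\le\esssup_K\varphi\le\overline\varphi_K$ holds a.e. The esssup is taken over $K\cap Q$; outside $Q$ the function is not defined, and one may extend it by zero. The cellwise estimate is then
\[
\int_{K\cap Q}\varphi\le|K\cap Q|\,\overline\varphi_K\le|K|\,\overline\varphi_K .
\]
The last inequality needs $\overline\varphi_K\ge0$, which holds because $\overline\varphi_K\ge\esssup_K\varphi\ge0$ whenever $|K\cap Q|>0$. When $|K\cap Q|=0$ the cell contributes zero to the integral. In that case a negative value of $\overline\varphi_K$ is impossible for the enclosures used, but the statement should be read with $\overline\varphi_K\ge0$. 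Alternatively, one uses $\pos{\overline\varphi_K}$, which only enlarges the sum.

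Summing over $K$ gives the claim. The same argument applies verbatim on $\Omega$ with $\mathcal T_\Omega$.

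No step is a real obstacle. The only points needing care are the null-set overlap of the cell boundaries and the sign of $\overline\varphi_K$ on cells that only touch $Q$ in a null set. Both are handled by the remarks above.
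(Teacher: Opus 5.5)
Your proof is correct and is the paper's argument: cell boundaries are null sets, so $\int_Q\varphi=\sum_K\int_K\varphi$, and each cell integral is at most $|K|\,\overline\varphi_K$. The extra care you take with cells meeting $Q$ only in a null set, and with the sign of $\overline\varphi_K$, is harmless but not needed: extending $\varphi$ by zero, as you suggest, already gives $\esssup_K\varphi\ge0$ on every cell of positive measure.
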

\begin{proof}
Cell boundaries are null sets, so $\int_Q\varphi=\sum_K\int_K\varphi\le\sum_K|K|\overline\varphi_K$.
\end{proof}

Writing $\operatorname{sup2}[a,b]:=\max\{a^2,b^2\}$ for an enclosure $[a,b]$, the
per-cell bounds are $\operatorname{sup2}$ of the enclosure of $\widehat\Res$, the sum
over components of $\operatorname{sup2}$ of the enclosures of $\widehat S$,
$\overline\lambda_K\pos{-\underline\gamma_K}$ for the complementarity density, and
$\operatorname{sup2}$ of the enclosure of $u_0-\widehat v(0)$ on trace cells. This
yields the \emph{per-term upper sums}
\[
\widehat U_R\ge\norm{\widehat\Res}{\LQ}^2,\quad
\widehat U_S\ge\norm{\widehat S}{\LQ}^2,\quad
\widehat U_0\ge\norm{u_0-\widehat v(0)}{\LO}^2,\quad
U_c\ge\rho_c^2 .
\]
The required field enclosures are computed from frozen networks and closed-form
data by first-order Taylor models (TM1) with interval coefficients, propagated
through the networks with the chain rule (\Cref{app:enclosures}). Validity of
\Cref{lem:upper-sum} does not depend on how $\mathcal T$ was generated: uniform,
branch-and-bound or data-dependent refinement all give rigorous bounds as soon as
the per-cell enclosures are valid, so adaptivity changes efficiency but not
validity.

\paragraph{Direct finite bounds.} Let $\overline c_F^{\,2}\ge c_F^2$ be a rational
upper bound of the Friedrichs constant. Since the bounds of \Cref{sec:direct} are
increasing in each of the four quantities, the continuum quantities may be replaced
by the upper sums:
\begin{align}
\widehat F_{\rm dir}^2&:=\widehat U_S+\widehat U_0+2U_c
+\frac{\overline c_F^{\,2}}{\alpha}\widehat U_R
\ \ge\ \norm{\Ainv(\sigma-\tau)}{\LQ}^2,\label{eq:direct-finite}\\
\widehat G_{\rm dir}&:=\sqrt{\frac{\overline c_F^{\,2}}{\alpha}\widehat U_R}
+\widehat F_{\rm dir}
\ \ge\ \norm{\Ahalf\gradx(u-\widehat v)}{\LQ},\label{eq:grad-sharp-finite}\\
\widehat G_{\rm AR}&:=\tfrac12\Bigl(\widehat a+\sqrt{\widehat a^{\,2}
+2(\widehat U_0+2U_c)}\Bigr)\ \ge\ \norm{\Ahalf\gradx(u-\widehat v)}{\LQ},
\label{eq:grad-AR-finite}
\end{align}
where $\widehat a:=\sqrt{\widehat U_S}+\sqrt{\overline c_F^{\,2}\widehat U_R/\alpha}$.
These hold by \eqref{eq:direct-flux}, \Cref{cor:grad-sharp} and \Cref{cor:grad-AR},
and $\widehat G_{\rm AR}\le\widehat G_{\rm dir}$. The weak-norm bound $\widehat E_{\rm dir}$
is \eqref{eq:direct-weak} evaluated on the same four sums for rational
$\theta\in\{1/32,\dots,31/32\}$; the smallest value is reported, each candidate being
valid. If only the combined sum
$\widehat U_r:=\frac\Lambda\alpha\widehat U_R+\widehat U_S+\widehat U_0$ is
available, with the weight $\Lambda>\overline c_F^{\,2}$ of \Cref{app:estimator},
the weaker \emph{unsplit} bounds
\begin{equation}\label{eq:direct-nosplit}
\bigl(\widehat F_{\rm dir}^{\rm ns}\bigr)^2:=\widehat U_r+2U_c,
\qquad
\bigl(\widehat G_{\rm dir}^{\rm ns}\bigr)^2:=\frac{\widehat U_r+2U_c}{1-\overline c_F^{\,2}/\Lambda}
\end{equation}
hold, the second with $\theta=\overline c_F^{\,2}/\Lambda$; they serve as a
comparison.

\begin{proposition}[asymptotic exactness under consistent refinement]\label{prop:upper-exactness}
Suppose the densities are Riemann integrable, the maximum cell diameters of the
interior and trace partitions tend to zero, and the per-cell excess satisfies
$0\le\overline\varphi_K-\esssup_K\varphi\le\varepsilon_h\to0$ uniformly. Then each
upper sum converges to its integral.
\end{proposition}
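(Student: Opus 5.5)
The plan is to split each upper sum into a Darboux upper sum and an enclosure excess. For a density $\varphi\ge0$ on $Q$ and a partition $\mathcal T_h$, write
\[
\sum_{K\in\mathcal T_h}|K|\,\overline\varphi_K
=\sum_{K\in\mathcal T_h}|K|\esssup_K\varphi
+\sum_{K\in\mathcal T_h}|K|\bigl(\overline\varphi_K-\esssup_K\varphi\bigr).
\]
The second sum lies between $0$ and $\varepsilon_h\sum_K|K|=\varepsilon_h|Q|$. The measures add up to $|Q|$ because the cells cover $\overline Q$ and have pairwise disjoint interiors, so this term tends to zero. The same splitting applies on $\Omega$ with the trace partitions, with $|\Omega|$ in place of $|Q|$.

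For the first sum, \Cref{lem:upper-sum} with $\overline\varphi_K=\esssup_K\varphi$ gives the lower bound $\int_Q\varphi$. It remains to show that $\sum_K|K|\esssup_K\varphi\to\int_Q\varphi$. Since $\esssup_K\varphi\le\sup_K\varphi$, this sum is bounded by the ordinary upper Darboux sum of $\varphi$ over the box partition $\mathcal T_h$. For a Riemann integrable function, upper Darboux sums converge to the integral whenever the mesh, here the maximum cell diameter, tends to zero. One way to see this is that the upper Darboux sum minus the integral is at most $\sum_K|K|\operatorname{osc}_K\varphi$. By Lebesgue's criterion, $\varphi$ is bounded and continuous a.e., so this oscillation sum tends to zero as the diameters shrink; the bounded-convergence theorem applied to $x\mapsto\operatorname{osc}_{K(x)}\varphi$ gives this directly. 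The densities in question are $\widehat\Res^2$, $|\widehat S|^2$, $\lambda_\psi\gamma^-$ on $Q$ and $(u_0-\widehat v(0))^2$ on $\Omega$, and each is bounded and nonnegative, so the argument applies to every one of them.

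Combining the two parts, each of $\widehat U_R$, $\widehat U_S$, $U_c$ and $\widehat U_0$ is squeezed between its integral and the integral plus $o(1)$, which proves the claim. I expect the main obstacle to be the mesh-based Darboux convergence for arbitrary box partitions that need not be nested or uniform. The partition family is arbitrary, so I will not rely on refinement of a fixed grid. Instead I will use the oscillation function together with Lebesgue's criterion, or equivalently the standard fact that Riemann integrability on a box is independent of the partition family once the mesh tends to zero. If the density is defined only a.e., as with the branch choice on $\{\gamma=0\}$, I will take a Riemann-integrable representative. The essential suprema do not depend on the representative, so this is harmless.
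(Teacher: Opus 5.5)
Your proof is correct and follows the paper's argument: the enclosure excess contributes at most $\varepsilon_h|Q|$ (or $\varepsilon_h|\Omega|$ on the trace partition), and the remaining sum lies between the integral and an upper Darboux sum that converges to it. Your oscillation and dominated-convergence argument fills in a standard fact the paper uses without proof, namely that this convergence holds for arbitrary non-grid box partitions once the mesh tends to zero.
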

\begin{proof}
Upper Darboux sums converge to the integral; the excess adds at most
$|Q|\varepsilon_h$ (or $|\Omega|\varepsilon_h$).
\end{proof}
This is a sufficient condition; budget-stopped adaptive runs need not satisfy it,
so a small observed gap between upper sum and integral indicates, but does not
prove, sharpness.

\subsection{Adaptive refinement}\label{sec:adaptive}
Cells are marked by D\"orfler's rule \cite{Doerfler1996} on their contributions to
the quantity being reduced. A marked cell is split dyadically in all $d+1$
space--time directions, and every new cell is evaluated from scratch; no child
enclosure is intersected with its parent's. The upper sum is therefore not
guaranteed to be monotone under refinement, and nothing requires it to be:
\Cref{lem:upper-sum} applies to the partition that is actually returned.
\Cref{alg:cert} summarizes the procedure.

\begin{algorithm}[t]
\centering
\fbox{\begin{minipage}{0.94\linewidth}
\small
\textbf{Input:} frozen networks $(N_\theta,S_\phi,M_\psi)$, closed-form data, initial
partitions $\mathcal T^0$ of $\overline Q$ and $\mathcal T_\Omega$ of $\overline\Omega$,
a target quantity $J\in\{\widehat F_{\rm dir}^2,\widehat G_{\rm dir}^2\}$, D\"orfler
parameter $\vartheta$, cell budget.\\[2pt]
\textbf{Repeat} for $k=0,1,\dots$ until the budget is reached:
\begin{enumerate}[leftmargin=*,nosep]
\item \emph{Enclose.} On every cell $K\in\mathcal T^k$, propagate TM1 enclosures of
$u_\theta,\nabla_{t,x}u_\theta,\sigma_\phi,\divx\sigma_\phi,\lambda_\psi$ and the data
in outward-rounded arithmetic; enclose $\gamma$ and select or combine the branches
\eqref{eq:branches}.
\item \emph{Sum.} Form the per-cell bounds and reduce
$\widehat U_R,\widehat U_S,\widehat U_0,U_c$ with exact cell volumes.
\item \emph{Bound.} Evaluate $\widehat F_{\rm dir}$, $\widehat G_{\rm dir}$,
$\widehat G_{\rm AR}$, $\widehat E_{\rm dir}$ by
\eqref{eq:direct-finite}--\eqref{eq:grad-AR-finite} and \eqref{eq:direct-weak}.
\item \emph{Mark and refine.} Mark a minimal set of cells carrying a fraction
$\vartheta$ of $J$ and split them dyadically into $2^{d+1}$ children.
\end{enumerate}
\textbf{Output:} the bounds of step 3 from a from-scratch evaluation of the final
partition.
\end{minipage}}
\caption{Adaptive certification of a frozen neural triple.}\label{alg:cert}
\end{algorithm}

\subsection{Relative bounds without a closed-form solution}\label{sec:nlo}
A relative statement needs a lower bound of the solution scale. With $A=I$ and
$N:=\norm{\gradx u}{\LQ}=\norm{\sigma}{\LQ}$, the triangle inequality and
\Cref{thm:direct,cor:grad-AR} give
\begin{equation}\label{eq:nlo}
N\ \ge\ N_{\rm lo}:=\max\Bigl\{\sqrt{\underline L_\tau}-\widehat F_{\rm dir},\
\sqrt{\underline L_{\nabla\widehat v}}-\widehat G_{\rm AR}\Bigr\},
\end{equation}
where $\underline L_\tau\le\norm{\tau}{\LQ}^2$ and
$\underline L_{\nabla\widehat v}\le\norm{\gradx\widehat v}{\LQ}^2$ are lower sums
$\sum_K|K|\inf_K(\cdot)$ of the same enclosures, with the lower endpoint of a squared
interval taken as $0$ if the interval contains $0$, and with both branches on
sign-unresolved cells. If $N_{\rm lo}>0$, then $\widehat F_{\rm dir}/N_{\rm lo}$ and
$\widehat G_{\rm AR}/N_{\rm lo}$ bound the relative errors. The shortfall satisfies
$N-N_{\rm lo}\le2\widehat F_{\rm dir}+\bigl(\norm{\tau}{\LQ}-\sqrt{\underline L_\tau}\bigr)$,
so the denominator is informative only when the certificate is already small.

\subsection{Status of the computed quantities}\label{sec:status}
The validity of a reported upper sum rests on two ingredients that must be kept
apart: the mathematical enclosure rules of \Cref{app:enclosure-rules}, and the
soundness of their finite-precision execution, whose basis and assumptions are
listed primitive by primitive in \Cref{app:soundness}. Exact-solution errors,
integrals of the same densities computed by float64 quadrature, slacks, shares,
effectivities and fitted slopes are diagnostics. \Cref{tab:status} fixes the status
of every reported quantity; the words \emph{certified} and \emph{validated} are used
only for rows (a)--(c).

\begin{table}[t]
\centering
\caption{Status of the reported quantities.}
\label{tab:status}
\small
\begin{tabular}{p{.04\linewidth}p{.36\linewidth}p{.52\linewidth}}
\toprule
& Quantity & Status\\
\midrule
(a) & \Cref{thm:direct}, its corollaries, \Cref{app:estimator} & Analytic inequalities between continuum integrals.\\
(b) & Per-term upper sums and the absolute bounds $\widehat F_{\rm dir},\widehat G_{\rm dir},\widehat G_{\rm AR},\widehat E_{\rm dir}$ & Outward-rounded evaluation of (a) by \Cref{lem:upper-sum}; valid under \Cref{app:enclosure-rules,app:soundness}. Printed as decimal ceilings of full-precision endpoints, recomputed in $192$-bit Arb.\\
(c) & Relative bounds $\widehat F_{\rm dir}/N$, $\widehat G_{\rm dir}/N$, $\widehat G_{\rm AR}/N$; $N_{\rm lo}$ and ratios over it & Numerator from (b); denominator a $192$-bit lower endpoint of the closed-form $N$ (\Cref{app:norms}) or $N_{\rm lo}$ from downward-rounded lower sums, printed as a decimal floor.\\
(d) & Errors $E(v)$, $E(\widehat v)$ (weak norm \eqref{eq:wnorm}), flux and gradient errors, float64 integrals, slacks, shares, effectivities, slopes, substitutions, the finite-difference reference, all field and partition figures & Diagnostics (float64 quadrature, sampling or a reference solve); no certified status.\\
(e) & Implementation checks (\Cref{app:soundness}) & Tests of the implementation; not a proof of enclosure validity.\\
\bottomrule
\end{tabular}
\end{table}
\section{Numerical assessment}\label{sec:numerics}

All experiments use $A=I$, $b=c=0$ and three independently trained networks
(seeds) per configuration, apart from the two-arm intervention of
\Cref{sec:ellipse}. The main quantities are the direct certificates of
the projected output: the flux bound $\widehat F_{\rm dir}$, the gradient bound
$\widehat G_{\rm AR}$ and, where it is the protocol target, $\widehat G_{\rm dir}$,
each divided by the gradient-and-flux scale $N=\norm{\gradx u}{\LQ}=\norm{\sigma}{\LQ}$
of the exact solution. Since $A=I$, $\widehat F_{\rm dir}/N$ and $\widehat G_{\rm AR}/N$
are validated upper bounds of the relative errors
$\norm{\sigma-\tau}{\LQ}/\norm{\sigma}{\LQ}$ and
$\norm{\gradx(u-\widehat v)}{\LQ}/\norm{\gradx u}{\LQ}$; a statement such as ``certified below $0.42N$'' is an upper bound on the relative
error, not an estimate of it. Diagnostic errors are given next to the bounds so that the gap can be read off;
$E(v)$ and $E(\widehat v)$ denote the weak-norm errors \eqref{eq:wnorm} of the original
and projected triples, and quantities without a hat ($U_R$, $U_S$, $F$, $G_{\rm AR}$)
are float64 integrals of the same densities in place of upper sums. The section
answers four questions:
\begin{enumerate*}[label=(\roman*)]
\item can the bounds be useful (\Cref{sec:useful});
\item what do the direct estimate and refinement improve (\Cref{sec:refinement});
\item what prevents further improvement (\Cref{sec:ellipse}); and
\item does the certificate work without an exact solution (\Cref{sec:rising}).
\end{enumerate*}

\subsection{Implementation and protocols}\label{sec:protocols}
The networks are double-precision $\tanh$ perceptrons with the polynomial mask and
the squared multiplier head; the one-dimensional problems use four hidden layers of
width $64$, the two-dimensional ones six layers of width $64$. The disk baseline
and ellipse runs use $20{,}000$ interior and $2{,}000$ initial collocation points,
$5{,}000$ Adam steps and at most $300$ L-BFGS steps; the long disk schedule uses
$20{,}000$ Adam steps with learning rates $10^{-3}$, $3\cdot10^{-4}$ after step
$8{,}000$ and $10^{-4}$ after step $14{,}000$. In \eqref{eq:loss}, residual,
constitutive and initial weights are one, penetration and complementarity weights
ten. Training and diagnostic evaluation use independent point sets; certification
operates on the frozen trained networks. Wall times refer to a single
workstation (Intel Core Ultra 9 285K) running $16$--$22$ worker processes.

Certification follows \Cref{alg:cert} with exact dyadic cell volumes, a binary64
outward-rounding backend and Arb ball arithmetic \cite{Johansson2017} for
transcendental endpoints and final reductions (\Cref{app:soundness}). All validated
table entries are recomputed in $192$-bit Arb from the full-precision endpoints of
the per-term sums. The one-dimensional problems use uniform partitions. The
two-dimensional problems use two refinement protocols with fixed budgets, marking
rules and targets. \emph{Protocol A}: a
uniform dyadic $128^3$
partition, then D\"orfler marking ($\vartheta=1/2$, with the number of new cells capped by the
budget) on the
contributions to $\widehat F_{\rm dir}^2$ until $3.5\cdot10^6$ cells (one step on the
disk, two on the ellipse). \emph{Protocol B}: from the Protocol A
partition, D\"orfler steps on the contributions to $\widehat G_{\rm dir}^2$, i.e.\ on
$|K|\bigl(\overline S_K+2\overline C_K\bigr)+\bigl(1+\sqrt{(\mathcal D+\mathcal L)/\mathcal L}\bigr)|K|\,\overline c_F^{\,2}\,\overline R_K$,
where $\overline R_K$, $\overline S_K$, $\overline C_K$ are the per-cell bounds of
$\widehat\Res^2$, $|\widehat S|^2$ and $\lambda_\psi\gamma^-$,
with the current global sums, to budgets $4.5$, $6$ and $8\cdot10^6$ cells, with
the target $\widehat G_{\rm dir}/N<1/2$ at the final budget; Protocol B is extended
to seeds $1$ and $2$ only if seed~$0$ meets the target. Both protocols are
formulated in terms of $\widehat G_{\rm dir}$; the sharper bound
$\widehat G_{\rm AR}\le\widehat G_{\rm dir}$ is evaluated on the same sums and
reported alongside. Trace partitions have $16384$ cells, and
the maximum dyadic level is $12$. Two further adaptive dyadic partitions of $10^5$
and $10^6$ cells are used in \Cref{tab:disk-same-partition,tab:disk-budget,tab:ellipse};
by \Cref{lem:upper-sum} the bounds are valid on any partition. The cost per trained network is $1.9$\,h for Protocol A and $4.6$--$4.7$\,h for
Protocol B.

\subsection{Can the bounds be useful?}\label{sec:useful}

\paragraph{One-phase Stefan problem.} Let $\Omega=(0,1)$, $T=1$, $A=1$, $b=c=0$,
$H(t)=e^t-t-1$,
\begin{equation}\label{eq:stefan-data}
f(t,x)=-1-H'(t)(1-x),\qquad u_0=0,\qquad g(t,x)=-H(t)(1-x).
\end{equation}
The exact solution, flux and multiplier are
\begin{equation}\label{eq:stefan-u}
\begin{gathered}
u=-H(t)(1-x)+\mathbf 1_{\{t>x\}}\bigl(e^{t-x}+x-t-1\bigr),\\
\sigma=-H(t)+\mathbf 1_{\{t>x\}}\bigl(e^{t-x}-1\bigr),\qquad
\lambda=\mathbf1_{\{t<x\}},
\end{gathered}
\end{equation}
with free boundary $s(t)=t$. Writing $z=u-g$, the non-contact phase satisfies
$z_t-z_{xx}=-1$; there the temperature $z_t=e^{t-x}-1$ vanishes at
$x=s(t)$ and $s'(t)=-\partial_x z_t(t,s(t))=1$, the one-phase Stefan condition
\cite{Duvaut1973}. The multiplier is discontinuous across the free boundary. The
reference norm is (\Cref{app:norms})
\begin{equation}\label{eq:stefan-reference}
N=\sqrt{23/12-e^2/4}\simeq0.263444 .
\end{equation}

\begin{table}[t]
\centering
\caption{Stefan, uniform $512^2$ partition. Validated: $\widehat F_{\rm dir}$, the
ratios and $\widehat E_{\rm dir}$. Diagnostic: $E(\widehat v)$ and the effectivity
$\widehat E_{\rm dir}/E(\widehat v)$.}
\label{tab:stefan-cert}
\small
\begin{tabular}{rrrrrrrr}
\toprule
Seed & $E(\widehat v)$ & $\widehat F_{\rm dir}$ & $\widehat F_{\rm dir}/N$ & $\widehat G_{\rm dir}/N$ & $\widehat G_{\rm AR}/N$ & $\widehat E_{\rm dir}$ & eff.\\
\midrule
0 & 0.0847 & 0.06847 & 0.260 & 0.330 & 0.319 & 0.1040 & 1.23\\
1 & 0.0866 & 0.06923 & 0.263 & 0.336 & 0.324 & 0.1070 & 1.24\\
2 & 0.0834 & 0.06723 & 0.256 & 0.325 & 0.314 & 0.1031 & 1.24\\
\bottomrule
\end{tabular}
\end{table}

\Cref{tab:stefan-cert} gives the direct certificates on the uniform $512^2$
partition. The flux error of the network and the gradient error of the projected
state are certified to be at most $0.263$ and $0.324$ of the exact scale for every
seed ($0.336$ with $\widehat G_{\rm dir}$), and the weak-norm bound has effectivity $1.23$--$1.24$. The enclosure is
nearly sharp: the per-term upper sums exceed the float64 integrals of the same
densities by factors $1.04$--$1.07$, and the flux certificate lies within a few per
cent of the corresponding exact-integral direct bound ($0.255N$). Against the
diagnostic errors (flux $0.052$, projected gradient $0.037$) the certificates have
effectivities $1.3$ and $2.3$; most of this gap is already present in the
exact-integral bound, i.e.\ in \eqref{eq:direct} rather than in its finite
evaluation. On the same partition the estimator route gives
$\widehat G_U/N=0.80$--$0.83$ and the unsplit direct flux bound $0.33$--$0.34$;
\Cref{fig:stefan-certificates} shows both on coarser uniform partitions. Note that
$E(\widehat v)>E(v)$ here ($0.085$ against $0.072$): feasibility and a tighter
certificate do not imply a smaller error in every norm, because $\partial_tw$ enters
the graph term of the projected residual.

\begin{figure}[t]
\centering
\includegraphics[width=0.62\textwidth,trim=366 0 0 0,clip]{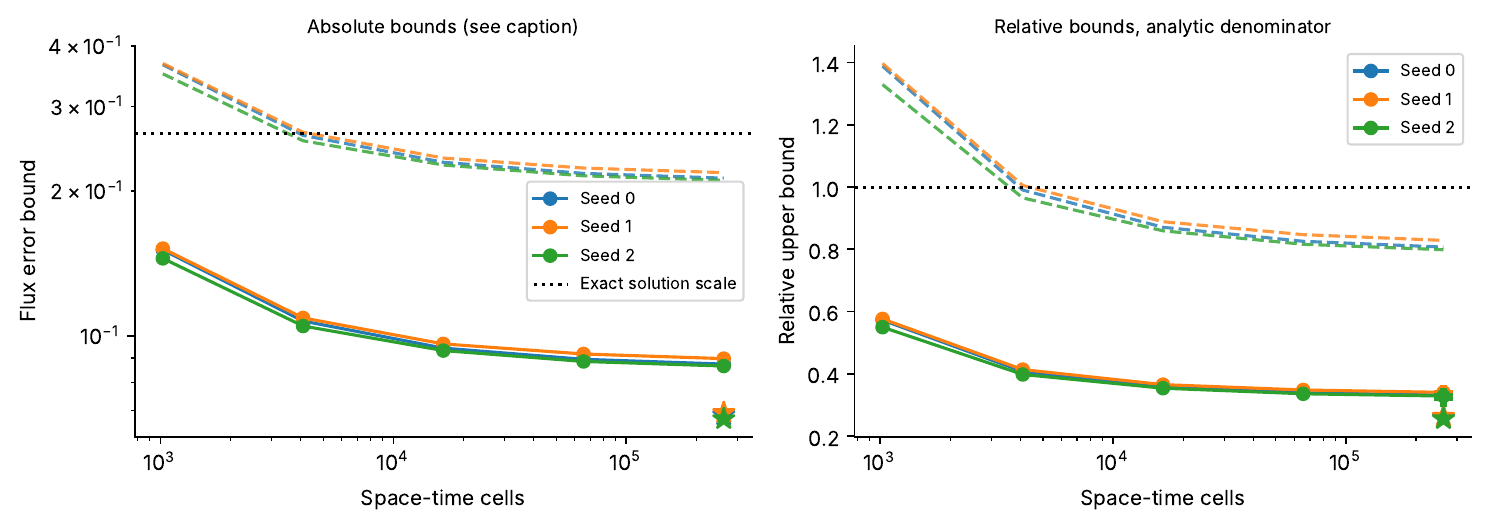}
\caption{Stefan, relative bounds on uniform partitions: unsplit direct flux bound
\eqref{eq:direct-nosplit} (solid), estimator-route projected bound $\widehat G_U$ of
\eqref{eq:projected-finite} (dashed), and the split direct bounds at $512^2$ cells
(stars: $\widehat F_{\rm dir}$; crosses: $\widehat G_{\rm dir}$). Dotted: ratio $1$. All
values validated.}
\label{fig:stefan-certificates}
\end{figure}

\paragraph{Expanding disk.} Let $Q=(0,1)\times\Omega$, $\Omega=(0,1)^2$, $A=I$,
$b=c=0$, $c_0=(\tfrac12,\tfrac12)$, $r=|x-c_0|$, $R(t)=0.15+0.2t$, $H$ as above, and
\[
\zeta=\frac{R(t)^2-r^2}{2R(t)},\qquad
B(x)=16x_1(1-x_1)x_2(1-x_2),\qquad
g=-H(t)B(x).
\]
The manufactured solution is
\begin{equation}\label{eq:disk-exact}
u=g+\kappa_0\zeta_+^2,\qquad
\sigma=-\gradx u,\qquad
\lambda=2\kappa_0\mathbf1_{\{\zeta<0\}},\qquad \kappa_0=5,
\end{equation}
with forcing
$f=g_t-\Delta g-2\kappa_0+\kappa_0\mathbf1_{\{\zeta>0\}}
(2\zeta\zeta_t-2\zeta\Delta\zeta+2-2|\nabla\zeta|^2)$, where
$\zeta_t=R'(R^2+r^2)/(2R^2)$, $\Delta\zeta=-2/R$ and $|\nabla\zeta|^2=r^2/R^2$. Then
\eqref{eq:fos} holds and $u=0$ on $\partial\Omega$. The non-contact set is the
expanding disk $\{r<R(t)\}$, which stays inside the square, and the multiplier jumps
from $0$ to $10$ across $r=R(t)$; at $\zeta=0$ the bracket vanishes, so $f$ is
continuous and belongs to $W^{1,\infty}(Q)$. We set $u_0=u(0,\cdot)$. The reference
norm is closed-form (\Cref{app:norms}), $N\simeq0.688572$.

\begin{table}[t]
\centering
\caption{Long-trained expanding disk under the two refinement protocols of
\Cref{sec:protocols}. Validated: all bounds, ratios and per-term sums.
$\widehat G_{\rm dir}/N$ is the protocol target. Wall time in hours.}
\label{tab:disk-protocol}
\small
\setlength{\tabcolsep}{4pt}
\begin{tabular}{lrrrrrrrrr}
\toprule
Prot. & Seed & $\widehat F_{\rm dir}$ & $\widehat F_{\rm dir}/N$ & $\widehat G_{\rm dir}/N$ & $\widehat G_{\rm AR}/N$ & $\widehat U_R$ & $\widehat U_S$ & $U_c$ & h\\
\midrule
A & 0 & 0.2718 & 0.395 & 0.643 & 0.524 & 0.5735 & 0.02767 & 0.008565 & 1.9\\
A & 1 & 0.3208 & 0.466 & 0.697 & 0.579 & 0.4994 & 0.03391 & 0.02181 & 1.9\\
A & 2 & 0.3188 & 0.463 & 0.731 & 0.596 & 0.6724 & 0.03394 & 0.01678 & 1.9\\
B & 0 & 0.1936 & 0.282 & 0.419 & 0.364 & 0.1777 & 0.01803 & 0.005216 & 4.7\\
B & 1 & 0.2331 & 0.339 & 0.478 & 0.415 & 0.1798 & 0.02246 & 0.01136 & 4.6\\
B & 2 & 0.2263 & 0.329 & 0.482 & 0.414 & 0.2182 & 0.02169 & 0.009223 & 4.6\\
\bottomrule
\end{tabular}
\end{table}

\Cref{tab:disk-protocol} reports the long-trained networks. Under Protocol A
($3.5\cdot10^6$ cells) the flux error is certified below $0.47N$ and the projected
gradient error below $0.60N$ ($\widehat G_{\rm AR}$; $0.74N$ with the target bound
$\widehat G_{\rm dir}$) on all three seeds. Under Protocol B ($8\cdot10^6$ cells) the
target is met on all seeds: flux $0.282$--$0.339$, gradient $0.419$--$0.482$
($\widehat G_{\rm dir}$) and $0.364$--$0.415$ ($\widehat G_{\rm AR}$). Against the
diagnostic errors (flux $0.052$--$0.058$, projected gradient $0.063$--$0.071$) the Protocol B
certificates have effectivities $3.7$--$4.1$ (flux) and $4.6$--$4.7$
($\widehat G_{\rm dir}$); against the exact-integral direct bounds the finite
enclosure costs a factor $2.2$--$2.4$ for the flux ($0.125$--$0.140N$ exact-integral)
and $3.0$--$3.1$ for $\widehat G_{\rm dir}$.

\subsection{What do the direct estimate and refinement improve?}\label{sec:refinement}

\paragraph{Same enclosures, different inequality.} \Cref{tab:disk-same-partition}
evaluates three certificates of the projected output on the same frozen adaptive
partitions of $10^6$ cells, for the baseline and the long-trained disk networks:
the estimator route $\widehat G_U$ (\Cref{app:estimator}), the unsplit direct bounds
\eqref{eq:direct-nosplit}, and the split direct bounds. Only the inequality applied
to the same cellwise enclosures changes. The ratio falls from $7$--$10$ to $3$--$4$
by \Cref{thm:direct}, and to $0.7$--$1.1$ (flux) by replacing the weight $\Lambda=2$
on the residual sum (the estimator weight of \Cref{app:estimator}) by
$\overline c_F^{\,2}=1/(2\pi^2)$. On these partitions the baseline networks are certified below the exact scale for
the flux, but not the long-trained ones.

\begin{table}[t]
\centering
\caption{Expanding disk, frozen adaptive $10^6$-cell partitions: certificates of the
projected output on the same enclosures. $\widehat G_U$: estimator route
\eqref{eq:projected-finite}; superscript ns: unsplit direct bounds
\eqref{eq:direct-nosplit}; last three columns: split direct bounds. Ratios validated;
$E(\widehat v)$ diagnostic.}
\label{tab:disk-same-partition}
\small
\setlength{\tabcolsep}{4pt}
\begin{tabular}{lrrrrrrrr}
\toprule
Schedule & Seed & $E(\widehat v)$ & $\widehat G_U/N$ & $\widehat F_{\rm dir}^{\rm ns}/N$ & $\widehat G_{\rm dir}^{\rm ns}/N$ & $\widehat F_{\rm dir}/N$ & $\widehat G_{\rm dir}/N$ & $\widehat G_{\rm AR}/N$\\
\midrule
Baseline & 0 & 0.2415 & 7.26 & 2.97 & 3.01 & 0.704 & 1.17 & 0.974\\
Baseline & 1 & 0.2677 & 7.29 & 2.98 & 3.02 & 0.719 & 1.19 & 1.01\\
Baseline & 2 & 0.2460 & 8.24 & 3.37 & 3.41 & 0.760 & 1.29 & 1.06\\
Long & 0 & 0.0917 & 9.37 & 3.85 & 3.90 & 1.02 & 1.62 & 1.32\\
Long & 1 & 0.1021 & 9.07 & 3.74 & 3.79 & 1.11 & 1.68 & 1.39\\
Long & 2 & 0.1040 & 10.1 & 4.15 & 4.20 & 1.12 & 1.77 & 1.43\\
\bottomrule
\end{tabular}
\end{table}

\paragraph{Where the slack is.} \Cref{tab:disk-budget} follows seed~$0$ from an
adaptive $10^5$-cell partition to the final Protocol B partition. Three facts organise the
picture. First, the three contributions $\mathcal L$, $\widehat U_S$ and $2U_c$ to
$\widehat F_{\rm dir}^2$ stay within a factor two of each other on the adaptive and
protocol partitions from $10^6$ cells on (on the uniform $128^3$ partition the
residual share is $0.56$); no single term dominates, and $\widehat U_0$ is negligible. Second, the slack of
$\widehat U_R$ and $U_c$ against the float64 integrals of the same densities is two
to three orders of magnitude at $10^6$ cells and still $10^2$ at $8\cdot10^6$,
whereas the slack of $\widehat U_S$ is $2.5$--$5$ from $128^3$ cells on. Sampling $3000$ random cells of the $10^6$ partition indicates that the dominant cause is
the TM1 remainder: the pre-activation radii are small ($\le0.035$), so the $\tanh$
remainders are small, but the per-neuron remainder intervals are mapped through
$|W_k|$ at every linear layer without cancellation and dominate the output radii,
most strongly in the derivative channels (median ratio of TM1 radius to the sampled
variation on the cell: $159$ for $\widehat\Res$, $61$ for $\widehat S$, $26$ for
$\gamma$, $2$ for $\lambda_\psi$); the sup majorant itself costs only $1.2$--$1.8$.
Third, the sign-unresolved region is not a band around the free boundary but the
contact set itself ($82\,\%$ of the volume at $10^6$ cells, $50$--$54\,\%$ at
$8\cdot10^6$), where $v\approx g$ and $|\gamma|$ is below the radius of its
enclosure; there the complementarity density is enclosed by
$\overline\lambda_K\cdot\mathrm{rad}(\gamma)$, so $U_c$ is governed by the state
enclosure rather than by the integral of $\lambda_\psi\gamma^-$. Over the three Protocol B
budgets, a factor $1.8$ in the cell count, the fitted log--log slopes of the squared
bounds against the cell count are $-0.84$ to $-0.91$ (flux) and $-0.95$ to $-1.03$
(gradient) on the three seeds; the constitutive sum, closest to its integral,
decreases more slowly (slopes $-0.57$ to $-0.65$) and carries the largest share at
the end. These are observations over the budgets tested, not asymptotic rates.

\begin{table}[t]
\centering
\caption{Long disk, seed 0: validated direct ratios, shares of
$\widehat F_{\rm dir}^2$ carried by $\mathcal L$, $\widehat U_S$ and $2U_c$, and the
slack of each per-term upper sum against the float64 integral of the same density on
the same partition (diagnostic). The first two rows are adaptive partitions of
$10^5$ and $10^6$ cells; the others follow Protocols A and B.}
\label{tab:disk-budget}
\small
\setlength{\tabcolsep}{3pt}
\begin{tabular}{lrrrrrrrrrr}
\toprule
Partition & Cells & $\frac{\widehat F_{\rm dir}}{N}$ & $\frac{\widehat G_{\rm dir}}{N}$ & $\frac{\widehat G_{\rm AR}}{N}$ & $\mathcal L$ & $\widehat U_S$ & $2U_c$ & \multicolumn{3}{c}{slack $\widehat U_R$, $\widehat U_S$, $U_c$}\\
\midrule
adaptive & 99996 & 6.39 & 11.5 & 8.73 & 0.64 & 0.30 & 0.06 & 139192 & 790.2 & 31736\\
adaptive & 1000000 & 1.02 & 1.62 & 1.32 & 0.35 & 0.34 & 0.31 & 1925 & 22.6 & 3870\\
uniform & 2097152 & 0.514 & 0.897 & 0.687 & 0.56 & 0.28 & 0.16 & 788 & 4.8 & 517\\
A & 3499994 & 0.395 & 0.643 & 0.524 & 0.39 & 0.37 & 0.23 & 330 & 3.8 & 434\\
B & 4500000 & 0.359 & 0.562 & 0.471 & 0.32 & 0.42 & 0.26 & 221 & 3.5 & 400\\
B & 5999995 & 0.318 & 0.483 & 0.411 & 0.27 & 0.43 & 0.30 & 147 & 2.8 & 362\\
B & 8000000 & 0.282 & 0.419 & 0.364 & 0.24 & 0.48 & 0.28 & 102 & 2.5 & 264\\
\bottomrule
\end{tabular}
\end{table}

\paragraph{The networks behind the bounds.} The state error of the seed-$0$ disk
network is of order $10^{-2}$ inside and around the disk (maximum $0.019$, $L^2(Q)$
norm $0.0080$; diagnostic). On the exact contact set the network lies below the
obstacle at more than $99\,\%$ of the grid points of each time slice, so there
$\widehat v=g=u$ up to $2\cdot10^{-5}$; after projection the state error is
confined to the non-contact disk (\Cref{fig:fields}, top) and its $L^2(Q)$ norm
halves to $0.0044$, while the gradient error of $\widehat v$ is larger than that of
$u_\theta$ ($0.063$ against $0.052$). The multiplier network replaces the jump from
$0$ to $10$ by a smooth transition, so $|\lambda-\lambda_\psi|$ is concentrated in a
thin annulus around $r=R(t)$ with maximum $6.7$. On the final Protocol B partition, refinement has added
one dyadic level on $40\,\%$ of the volume, $50\,\%$ of the volume is
sign-unresolved and $79\,\%$ lies in the exact contact set, but only $2.7\,\%$ lies
within one cell size of the exact interface: the refinement effort goes into
enclosure overhead on and near the contact set, not into resolving the interface.

\paragraph{Alternatives to refinement.} Two validated alternatives to TM1 are compared on long-disk seed~$0$; each yields
a complete certificate. Affine
forms (one noise symbol per neuron remainder, reduced to $m$ symbols per cell and
intersected with the TM1 ranges) shrink the per-term sums on a fixed partition by
factors $7$--$140$ on $10^4$ cells, at $7$--$28$ times the TM1 cost per cell; at
equal wall time ($\approx1450$\,s) TM1 refinement reaches $581\,\mathrm{k}$ cells and
$\widehat F_{\rm dir}/N=1.41$, against $30\,\mathrm{k}$ cells and $18.5$ with $m=32$, so
refinement wins by more than an order of magnitude. Second-order Taylor models
(TM2, cubic Lagrange remainders from validated $\tanh^{(3)}$ ranges) cost $3$--$6$
times TM1 per cell; applied to the $145\,\mathrm{k}$ cells with the largest flux
contribution of the Protocol A partition, TM2 lowers $\widehat G_{\rm dir}/N$ from $0.643$ to
$0.569$, a gain $-\Delta\ln(\widehat G_{\rm dir}/N)$ of $0.27$ per $10^6$
TM1-equivalent cell evaluations, against $0.06$--$0.12$ for the three Protocol B refinement
steps and $0.026$ for TM2 on random cells. Targeted TM2 on the cells that carry the
bound is cost-effective in this comparison. A product enclosure of $\lambda_\psi\gamma^-$ had no
effect, since $U_c$ is governed by the radius of $\gamma$ on the contact set.

\subsection{What prevents further improvement?}\label{sec:ellipse}
To test an obstacle that is not proportional to the mask, let
\[
u_b=B(x)(0.10+0.05t)(1+0.2x_1-0.1x_2),\qquad
\zeta=\tfrac{(x_1-0.43)^2}{a(t)^2}+\tfrac{(x_2-0.57)^2}{d(t)^2}-1,
\]
with $a(t)=0.12+0.10t$, $d(t)=0.08+0.07t$, $k_0=0.01$ and $\zeta_-=\max\{-\zeta,0\}$,
and set
\[
\begin{gathered}
g=u_b-k_0\zeta^2,\qquad u=u_b-k_0\zeta_-^2,\qquad \sigma=-\nabla u,\qquad
\lambda=2k_0|\nabla\zeta|^2\mathbf1_{\{\zeta<0\}},\\
f=(\partial_t-\Delta)u_b-2k_0\zeta(\zeta_t-\Delta\zeta)\mathbf1_{\{\zeta<0\}} .
\end{gathered}
\]
Then $u_t-\Delta u-f=\lambda$, $u\ge g$ and complementarity holds; the contact
ellipse stays inside the square, $u=0$ and $g\le0$ on the lateral boundary, and
$u_0=u(0,\cdot)$. The reference norm is closed-form (\Cref{app:norms}),
$N\simeq0.315153$.

\begin{table}[t]
\centering
\caption{Expanding ellipse, direct bounds of the projected output (validated).
Diagnostic weak-norm errors $E(\widehat v)$: $0.1186$, $0.1183$, $0.1238$ for seeds
$0$, $1$, $2$. Protocol B is applied to seed 0 only, since its target is not met there.}
\label{tab:ellipse}
\small
\begin{tabular}{llrrrrr}
\toprule
Seed & Partition & Cells & $\widehat F_{\rm dir}$ & $\widehat F_{\rm dir}/N$ & $\widehat G_{\rm dir}/N$ & $\widehat G_{\rm AR}/N$\\
\midrule
0 & adaptive & 99996 & 2.094 & 6.65 & 11.2 & 9.37\\
 & uniform $128^3$ & 2097152 & 0.2591 & 0.822 & 1.22 & 1.12\\
 & A & 3499994 & 0.1888 & 0.599 & 0.891 & 0.814\\
 & B & 4500000 & 0.1766 & 0.561 & 0.794 & 0.742\\
 & B & 5999995 & 0.1667 & 0.529 & 0.732 & 0.691\\
 & B & 8000000 & 0.1583 & 0.503 & 0.681 & 0.648\\
1 & adaptive & 99996 & 2.651 & 8.41 & 15.2 & 11.8\\
 & uniform $128^3$ & 2097152 & 0.2870 & 0.911 & 1.48 & 1.28\\
 & A & 3499994 & 0.2059 & 0.654 & 1.05 & 0.913\\
2 & adaptive & 99996 & 2.379 & 7.55 & 12.7 & 10.7\\
 & uniform $128^3$ & 2097152 & 0.2731 & 0.867 & 1.34 & 1.20\\
 & A & 3499994 & 0.1992 & 0.632 & 0.956 & 0.866\\
\bottomrule
\end{tabular}
\end{table}

\paragraph{Bounds and their composition.} Under Protocol A
(\Cref{tab:ellipse}) the flux error is certified below $0.66N$ on all three seeds
and the projected gradient error below $0.92N$ ($\widehat G_{\rm AR}$; with
$\widehat G_{\rm dir}$, below $N$ on seeds $0$ and $2$ and $1.05N$ on seed $1$).
Against the diagnostic errors (flux $0.21$--$0.23N$, projected gradient
$0.28$--$0.30N$) the effectivities of $\widehat F_{\rm dir}$ and $\widehat G_{\rm dir}$
are $2.7$--$3.2$ and $3.0$--$3.6$. Under Protocol B, seed~$0$ reaches
$\widehat G_{\rm dir}/N=0.681$ at $8\cdot10^6$ cells ($\widehat G_{\rm AR}/N=0.648$),
above the target, so the protocol is not extended to seeds $1$ and $2$. Replacing every
upper sum by the float64 integral of the same density gives $F/N\approx0.363$ and
$G_{\rm AR}/N\approx0.386$ (diagnostic), against a projected gradient error of
$0.30N$. In the notation of \Cref{cor:grad-AR}, the validated bound at $8\cdot10^6$
cells consists of $\sqrt{\widehat U_S}/N=0.467$ (exact integral $0.362$, slack
$1.67$) and $\sqrt{\overline c_F^{\,2}\widehat U_R}/N=0.178$ (exact integral $0.023$,
slack $58$). The flux certificate is therefore within a factor $1.4$ of its
exact-integral value, and the gradient target is missed because of the residual enclosure: a ratio below $1/2$ would need the slack of $\widehat U_R$ to fall from
$58$ to about $1.7$, which at its fitted slope $-0.93$ means about $44$ times more
cells (a diagnostic extrapolation); the exact-integral value itself is below
$1/2$.

\paragraph{Where the floor comes from.} The multiplier network of seed~$0$ is
numerically zero ($\lambda_\psi\le5\cdot10^{-9}$ on three time slices, against
$\max\lambda=4.5$--$8.4$), and the state network lies below the obstacle on a region
that contains the exact contact ellipse and is $2.15$--$2.32$ times its area
(\Cref{fig:fields}, bottom). Projection replaces the state by $g$ there. On the ring
between the exact contact set and the edge of this region, $\widehat v=g$ while
$u=u_b>g$, and $\gradx\widehat v-\gradx u=-2k_0\zeta\gradx\zeta$ is not small because
the ellipse is small; projection therefore raises the gradient error from $0.052$ to
$0.093$ while it lowers the $L^2(Q)$ error from $0.0100$ to $0.0062$. The ring
carries $95\,\%$ of $U_S$, and $U_S$ carries $99\,\%$ of $F^2$; the refinement
marking concentrates there (\Cref{fig:ellipse-partition}). The substitutions of
\Cref{tab:ellipse-substitution} make this precise. Replacing only the multiplier
(S1) or only the flux (S3) by the exact one makes the bound much larger, because the
trained state and flux satisfy the equation without a multiplier, so that on the
contact set $\widehat\Res\approx\lambda$ once either is corrected. Only the exact flux
and multiplier together (S4) lower it, to $0.304N$, essentially the projected-state
error $0.30N$: with $\tau=\sigma$ one has $\widehat S=\gradx(\widehat v-u)$. The
a~posteriori multiplier $\mu^\ast$ of \eqref{eq:mu-star} halves $U_R$ but lowers
$G_{\rm AR}/N$ only from $0.386$ to $0.378$ (S2). For the trained networks, the floor is thus the error of the projected state near
the free boundary; no choice of flux, multiplier or enclosure
removes it.

\begin{table}[t]
\centering
\caption{Expanding ellipse, seed 0, final Protocol B partition: float64 integrals of the
per-term densities (no enclosure; diagnostic) for the trained projected state with
the trained or substituted flux and multiplier. ``Ring'': the part of $\{v<g\}$
outside the exact contact set. The Protocol A partition gives the same values to
four digits.}
\label{tab:ellipse-substitution}
\footnotesize
\setlength{\tabcolsep}{3pt}
\begin{tabular}{lllrrrrr}
\toprule
Run & Flux & Multiplier & $U_R$ & $U_S$ & $U_S$ on ring & $F/N$ & $G_{\rm AR}/N$\\
\midrule
S0 & $\sigma_\phi$ & $\lambda_\psi$ & $1.08\cdot10^{-3}$ & $1.30\cdot10^{-2}$ & $1.24\cdot10^{-2}$ & 0.363 & 0.386\\
S1 & $\sigma_\phi$ & $\lambda$ (exact) & $4.66\cdot10^{-1}$ & $1.30\cdot10^{-2}$ & $1.24\cdot10^{-2}$ & 0.608 & 0.850\\
S2 & $\sigma_\phi$ & $\mu^\ast$ & $4.83\cdot10^{-4}$ & $1.30\cdot10^{-2}$ & $1.24\cdot10^{-2}$ & 0.363 & 0.378\\
S3 & $\sigma$ (exact) & $\lambda_\psi$ & $4.69\cdot10^{-1}$ & $8.71\cdot10^{-3}$ & $7.04\cdot10^{-3}$ & 0.572 & 0.785\\
S4 & $\sigma$ (exact) & $\lambda$ (exact) & $1.22\cdot10^{-4}$ & $8.71\cdot10^{-3}$ & $7.04\cdot10^{-3}$ & 0.297 & 0.304\\
S5 & $\sigma$ (exact) & $\mu^\ast$ & $3.54\cdot10^{-5}$ & $8.71\cdot10^{-3}$ & $7.04\cdot10^{-3}$ & 0.297 & 0.301\\
\bottomrule
\end{tabular}
\end{table}

\paragraph{A controlled multiplier intervention.} To test whether a nonzero initial
multiplier changes the trained state, two networks are trained with a fourth random seed and the
configuration of seed~$0$, differing only in the output bias of $M_\psi$, which is
$1$ in the intervention arm, so that $\lambda_\psi\approx1$ initially. Four
hypotheses are tested: (P1) the intervention multiplier stays at
least $10^{-3}\max\lambda$, (P2) the area ratio of $\{v<g\}$ to the contact set falls
below $1.5$, and (P3) the exact-integral $U_S$ and (P4) the validated Protocol A bound
$\widehat G_{\rm AR}/N$ are lower than in the control arm. P1 and P2 are rejected: the
intervention multiplier also collapses, from a maximum of $0.86$ to $5\cdot10^{-4}$
within $100$ Adam steps, and the area ratio is $2.18$--$2.33$ in both arms. P3 holds, but only by $0.8\,\%$. P4 holds ($0.817$ against $0.875$), but the exact-integral
$G_{\rm AR}/N$ is $0.398$ in both arms, so the difference lies in enclosure slack. Changing the multiplier initialisation
therefore neither prevented the collapse nor materially changed the projected-state
error; since the collapse occurs in both arms, the experiment does not decide
whether the collapse causes the poor trained state. During training of seed~$0$ the complementarity term of \eqref{eq:loss} falls from
$2.5\cdot10^{-2}$ to $4\cdot10^{-6}$ within $50$ steps, consistent with state and
flux fitting the unconstrained equation early. Lowering the floor requires a more
accurate trained state near the free boundary; a sharper enclosure alone cannot
achieve it.

\begin{figure}[t]
\centering
\includegraphics[width=0.78\textwidth,trim=0 637 0 0,clip]{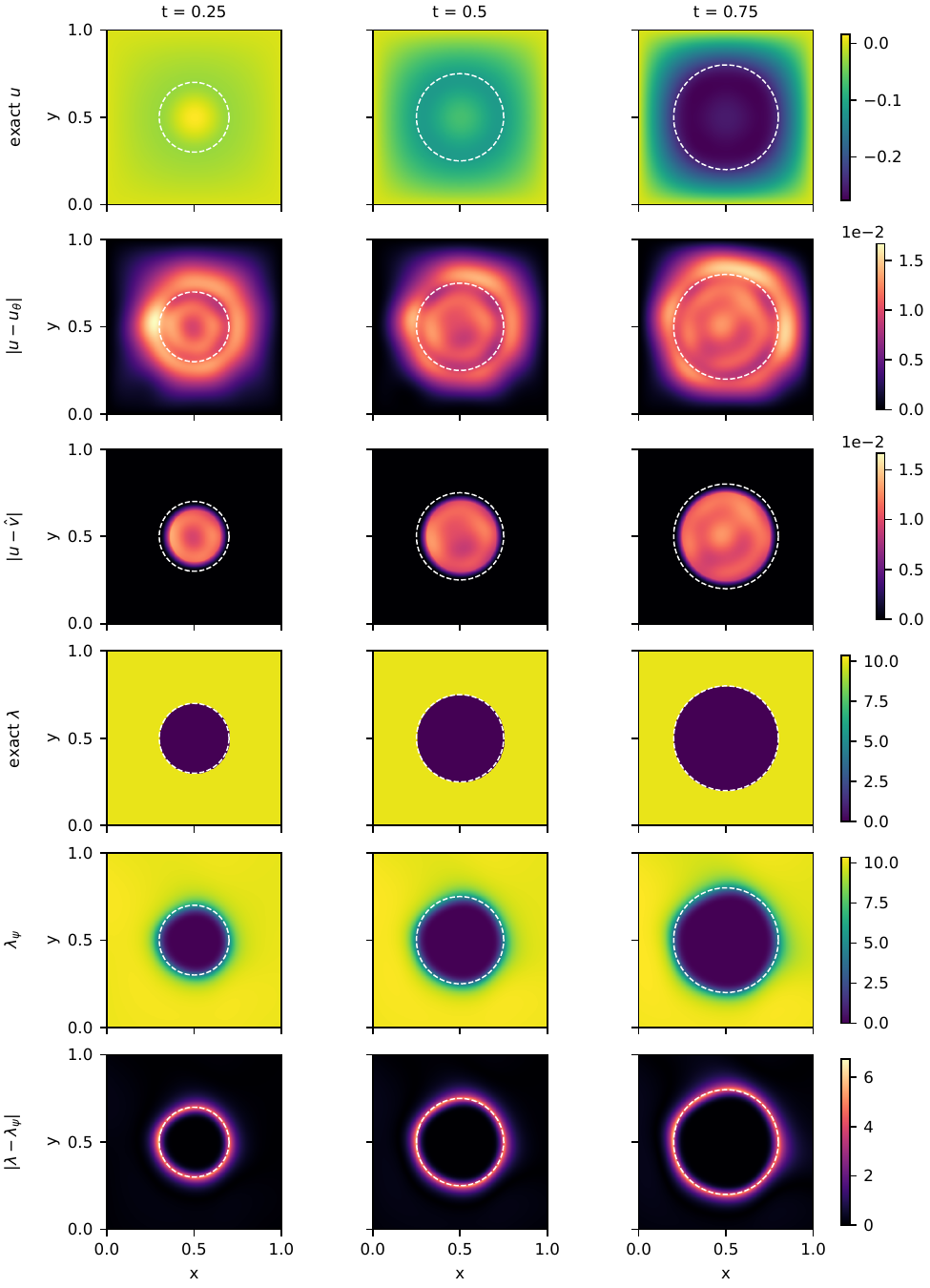}\\[-1pt]
\includegraphics[width=0.78\textwidth,trim=0 328 0 219,clip]{figures/fig_disk_long_solution.pdf}\\[-1pt]
\includegraphics[width=0.78\textwidth,trim=0 3 0 524,clip]{figures/fig_disk_long_solution.pdf}\\[4pt]
\includegraphics[width=0.78\textwidth,trim=0 328 0 219,clip]{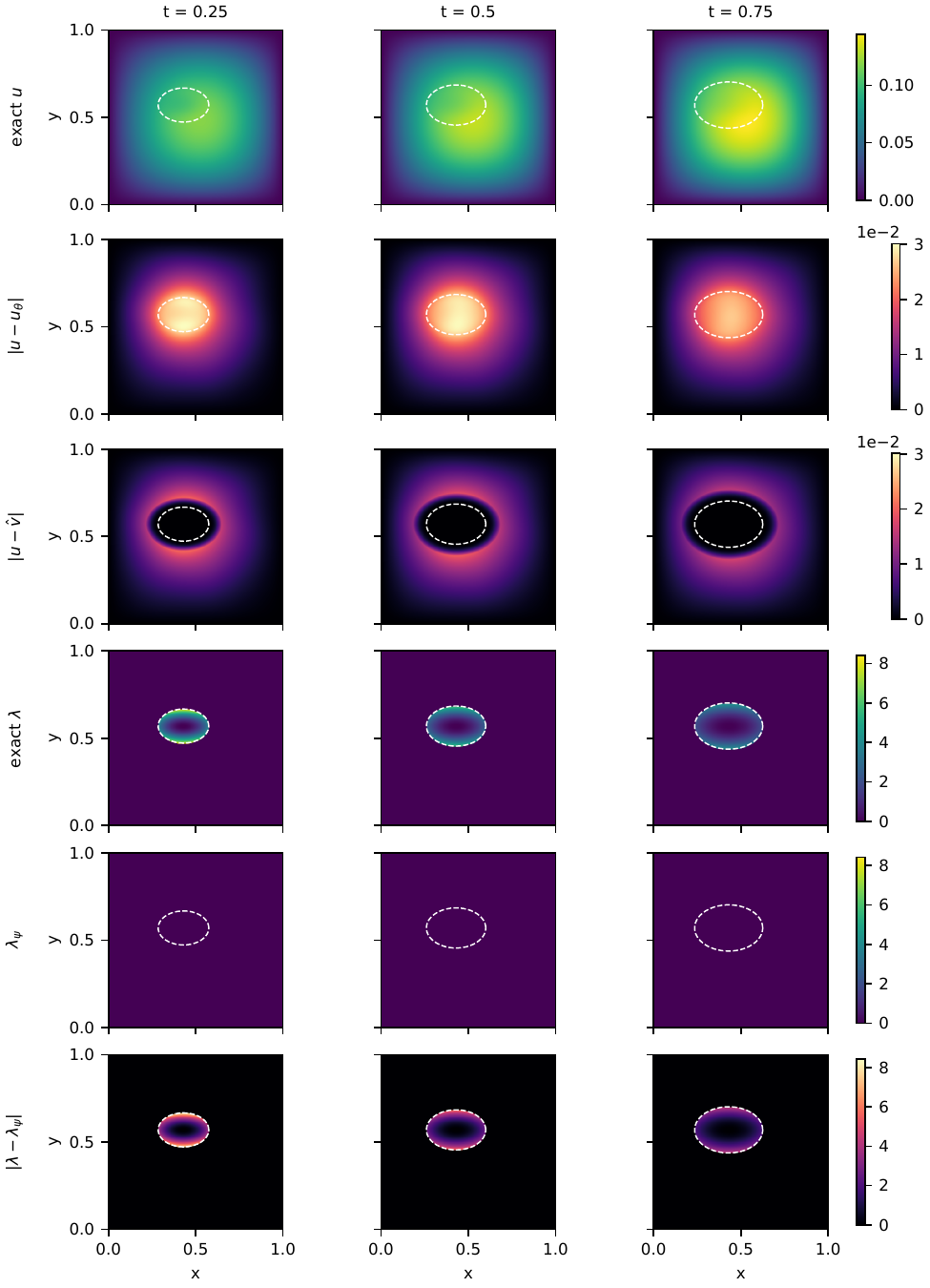}\\[-1pt]
\includegraphics[width=0.78\textwidth,trim=0 125 0 423,clip]{figures/fig_ellipse_solution.pdf}\\[-1pt]
\includegraphics[width=0.78\textwidth,trim=0 3 0 622,clip]{figures/fig_ellipse_solution.pdf}
\caption{Seed-$0$ fields at $t=0.25,0.5,0.75$ on a $401\times401$ grid (diagnostic,
float64). Top two rows, long disk: projected state error $|u-\widehat v|$,
$\widehat v=\max\{u_\theta,g\}$, and multiplier error $|\lambda-\lambda_\psi|$; dashed:
exact free boundary $r=R(t)$. Bottom two rows, ellipse: $|u-\widehat v|$, large on
the ring between the exact contact set (dashed) and the edge of $\{u_\theta<g\}$, and
the trained multiplier $\lambda_\psi$ on the colour scale of the exact $\lambda$,
numerically zero.}
\label{fig:fields}
\end{figure}

\begin{figure}[t]
\centering
\includegraphics[width=0.85\textwidth]{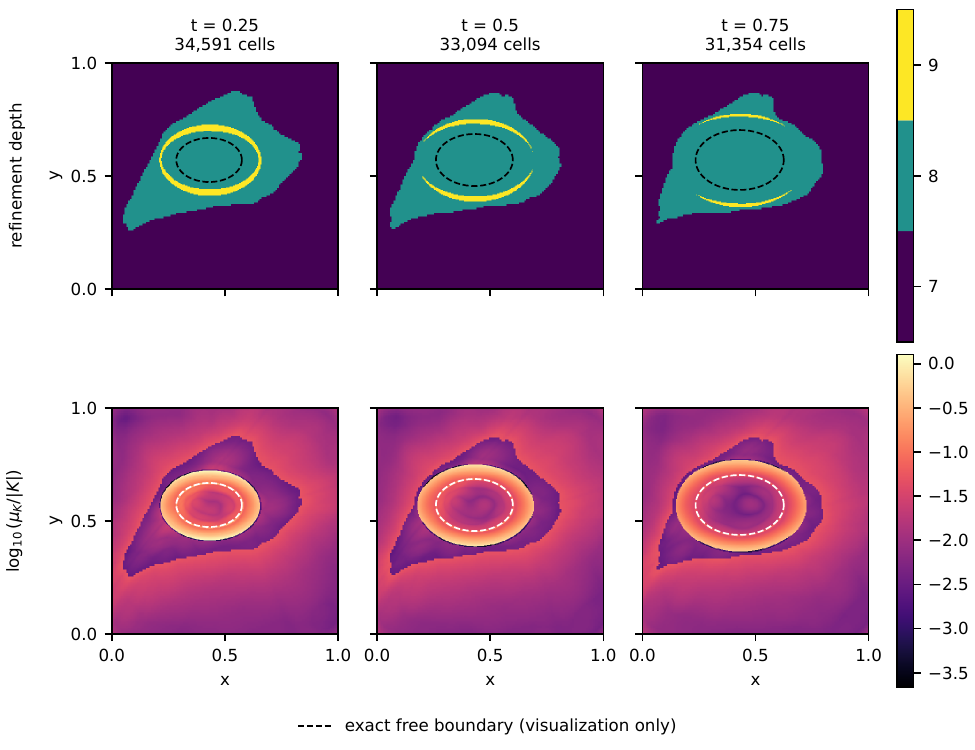}
\caption{Ellipse, seed $0$: the final Protocol B partition ($8\cdot10^6$ space--time cells)
cut at $t=0.25,0.5,0.75$; a cell is drawn if $t_0\le t<t_1$. Top: refinement depth
$-\log_2(\text{cell side})$. Bottom: the Protocol B marking density $\log_{10}(\mu_K/|K|)$.
The marking concentrates on the ring between the exact contact set (dashed; not used
by the partition) and the edge of the region where the projection is active.}
\label{fig:ellipse-partition}
\end{figure}

\subsection{Does the certificate work without an exact solution?}\label{sec:rising}
Let $\Omega=(0,1)$, $T=1$, $A=1$, $b=c=0$ and
\begin{equation}\label{eq:rising-data}
f(t,x)=-8\sin(\pi x),\qquad u_0=0,\qquad g(t,x)=-0.6+0.6\,t-2\bigl(x-\tfrac12\bigr)^2 .
\end{equation}
The forcing pushes the solution onto a rising obstacle; $g\le-0.5$ on the lateral
boundary, $g(0,\cdot)\le u_0$, and $\psi=\partial_tg-\partial_{xx}g-f=4.6+8\sin(\pi x)>0$,
so on the contact set $\lambda=4.6+8\sin(\pi x)$, with a jump of $11.5$--$12.6$
across the free boundary. No closed form of $u$, of the free boundary or of $N$ is
available. As a diagnostic reference we use implicit Euler and lumped $P_1$ elements
with an exact primal--dual active-set solve in every step on $4096^2$ and $2048^2$
grids: the contact set nucleates at $x=1/2$ at $t^\ast\approx0.109$ and widens to
$[0.327,0.673]$ at $t=1$, $N_{\rm ref}=0.86486$ (the two grids differ by
$1.2\cdot10^{-4}$ relative), and the discrete multiplier equals $4.6+8\sin(\pi x)$ on
the contact set to $3\cdot10^{-9}$ away from the free boundary and the nucleation
time.

The networks use the training configuration of the Stefan networks (seeds
$0$--$2$) and are certified on uniform $512^2$ and $1024^2$ partitions with the lower
bound $N_{\rm lo}$ of \eqref{eq:nlo} as denominator; the target is
$\widehat F_{\rm dir}/N_{\rm lo}<1/2$ and $\widehat G_{\rm AR}/N_{\rm lo}<1/2$ at $1024^2$
cells on all seeds. The lower sums are checked where $N$ is known: on the Stefan
certificates of \Cref{tab:stefan-cert}, sampling of $1000$ random cells per seed found no value below
a cell's lower endpoint, and $N_{\rm lo}/N=0.834$, $0.819$, $0.829$; the flux ratios
become $0.312$, $0.321$, $0.308$ instead of $0.260$, $0.263$, $0.256$, and the
gradient ratios $0.382$, $0.396$, $0.379$ instead of $0.319$, $0.324$, $0.314$. This
is the cost of not knowing $N$ when the networks are good.

\begin{table}[t]
\centering
\caption{Rising obstacle (no closed-form solution). Validated: $\widehat F_{\rm dir}$,
$\widehat G_{\rm AR}$ (ceilings), $N_{\rm lo}$ (floor) and the ratios (ceilings).
Diagnostic, against the reference ($N_{\rm ref}=0.86486\pm1.1\cdot10^{-4}$): relative
flux and projected-gradient errors and $N_{\rm lo}/N_{\rm ref}$.}
\label{tab:rising}
\small
\setlength{\tabcolsep}{3.5pt}
\begin{tabular}{lrrrrrrrrrr}
\toprule
 & & \multicolumn{6}{c}{validated} & \multicolumn{3}{c}{diagnostic}\\
\cmidrule(lr){3-8}\cmidrule(lr){9-11}
Cells & Seed & $\widehat F_{\rm dir}$ & $\widehat G_{\rm AR}$ & $N_{\rm lo}$ & $\frac{\widehat F_{\rm dir}}{N_{\rm lo}}$ & $\frac{\widehat G_{\rm dir}}{N_{\rm lo}}$ & $\frac{\widehat G_{\rm AR}}{N_{\rm lo}}$ & flux & grad. & $\frac{N_{\rm lo}}{N_{\rm ref}}$\\
\midrule
$1024^2$ & 0 & 1.445 & 1.629 & 0.211 & 6.85 & 7.78 & 7.72 & 1.247 & 1.051 & 0.244\\
$1024^2$ & 1 & 1.447 & 1.634 & 0.206 & 7.01 & 7.98 & 7.91 & 1.245 & 1.057 & 0.239\\
$1024^2$ & 2 & 1.440 & 1.621 & 0.217 & 6.62 & 7.52 & 7.46 & 1.250 & 1.041 & 0.252\\
$512^2$ & 0 & 1.452 & 1.637 & 0.201 & 7.19 & 8.17 & 8.11 & 1.247 & 1.051 & 0.234\\
$512^2$ & 1 & 1.454 & 1.642 & 0.197 & 7.36 & 8.38 & 8.31 & 1.245 & 1.057 & 0.228\\
$512^2$ & 2 & 1.447 & 1.629 & 0.208 & 6.95 & 7.88 & 7.82 & 1.250 & 1.041 & 0.241\\
\bottomrule
\end{tabular}
\end{table}

\Cref{tab:rising} shows that the target is not met: the ratios are $6.6$--$8.4$
on every seed and budget. The certificate is not the reason. The enclosure is nearly
exact (slacks $1.00$--$1.03$ for every term at $1024^2$ cells, lower sums within
$0.4\,\%$ of their integrals), and against the reference the effectivities are
$1.33$--$1.34$ for the flux and $1.79$--$1.81$ for the gradient bound. The networks are inaccurate: their relative flux error is $1.24$--$1.25$, the projected gradient error
$1.04$--$1.06$, and $E(\widehat v)/N_{\rm ref}\approx1.63$. On every seed the multiplier falls below $10^{-4}$ within $2000$ Adam steps and
ends at most $5.4\cdot10^{-5}$, against the exact $12.6$, while the penetration loss
stays near $0.08$.
The set $\{v\le g\}$ appears at $t=0.110$--$0.111$, at the nucleation time of the
reference, but it is a single interval $2.2$--$3.1$ times as long as the contact set
(\Cref{fig:rising}): the trained state follows the unconstrained equation and
penetrates the obstacle, as on the ellipse but more strongly. Accordingly
$\widehat U_S$ carries $98\,\%$ of $\widehat F_{\rm dir}^2$; the a~posteriori multiplier
$\mu^\ast$ removes the residual term but lowers the exact-integral
$G_{\rm AR}/N_{\rm ref}$ only from $1.87$ to $1.65$. The lower bound $N_{\rm lo}$ is
small, $0.24N_{\rm ref}$, because $\norm{\tau}{\LQ}\approx1.66$ exceeds $N$ and
$\widehat F_{\rm dir}\approx1.44$ is subtracted from it.

Without the reference, the available evidence is a training loss that decreases
by a factor of about $360$--$370$ before stagnating, and validated bounds seven
times the validated lower bound of the solution scale. Without an efficiency
bound, the second observation alone only means that no accuracy is certified; the reference shows that this is not pessimism of the certificate
(effectivities $1.3$--$1.8$) but a property of the networks, whose errors exceed
$N$. Together with \Cref{sec:ellipse}, the example points to a systematic failure of
the penalty-based joint training \eqref{eq:loss}: in both cases the multiplier
network collapses and the state follows the unconstrained equation. For scale, training takes $214$--$235$\,s per seed and certification
$812$--$849$\,s per $1024^2$ partition, whereas the finite-element reference solve
takes about $2$\,s on $4096^2$.

\begin{figure}[t]
\centering
\includegraphics[width=\textwidth]{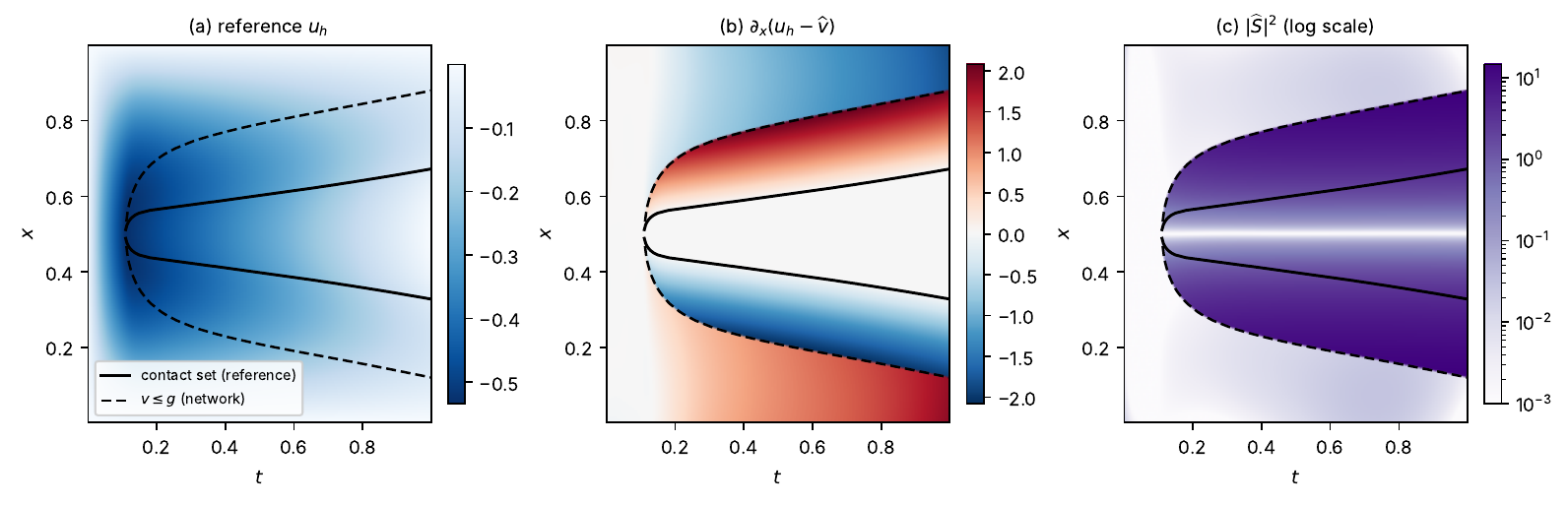}
\caption{Rising obstacle, seed $0$ (diagnostic; reference on $4096^2$). (a) Reference
solution; solid: boundary of the reference contact set; dashed: boundary of
$\{v\le g\}$ for the network. (b) Error of the projected gradient. (c) Constitutive
residual density $|\widehat S|^2$, large on all of $\{v\le g\}$, where
$\widehat v=g$ while the flux follows the unconstrained state.}
\label{fig:rising}
\end{figure}
\section{Conclusions}\label{sec:conclusions}
The direct certificate of \Cref{thm:direct} bounds the flux error of a neural
triple and the energy error of its feasible projection without the coercivity and
norm-equivalence constants of the first-order formulation, and weights the PDE
residual by $c_F^2/\alpha$ only. It extends the heat-operator majorant of Apushkinskaya and Repin to general
coefficients, a freely chosen multiplier and a flux bound, and its gradient form is
at least as sharp as theirs. Penetration consistency shows that the bound can be
made small on neural classes, and cellwise upper sums of outward-rounded
enclosures turn it into a finite, guaranteed computation on arbitrary partitions.

On the Stefan problem and on all three long-trained expanding-disk networks, the
projected output is certified below half the exact scale for the flux and the
gradient. On the disk the finite certificate sits a factor $2.2$--$2.4$ (flux)
above its exact-integral value after refinement; the diagnostics indicate that most
of this slack comes from the first-order Taylor-model remainder propagated through
the linear layers, and in the comparisons of \Cref{sec:refinement} it is reduced most cheaply by
refinement, or by second-order models on the cells that carry the bound. On the
ellipse the exact-integral values are dominated by the constitutive residual on a
ring where the state network lies below the obstacle outside the exact contact set;
substitution tests show that, for the trained networks, this floor is the error of
the projected state near the free boundary, while the remaining
gap of the gradient bound comes from enclosure overhead in the residual term. The
per-term split of \Cref{thm:direct} is what makes this distinction visible. Without
a closed-form solution, a validated lower bound of the solution scale replaces the
reference norm at a cost of about $1.2$ in the ratios when the networks are good; on
the rising obstacle the enclosure is nearly exact, but the networks, whose
multiplier collapsed during training, follow the unconstrained equation, so the bounds exceed the solution scale.

\paragraph{Limitations.} The tests are three manufactured examples and one example
without closed-form solution on box domains in one and two space dimensions.
Uniform refinement requires $O(h^{-(d+1)})$ cells and adaptivity does not remove
worst-case dimensional growth: the two-dimensional certificates below half the
exact scale cost $4.7$ hours per trained network, far more than a conventional
solve. The certificate verifies a given surrogate; no high-dimensional
scalability or advantage over a conventional obstacle solver is established. The
ellipse networks are not converged (stalled L-BFGS), and their gradient
certificate stays above half the exact scale on all seeds. The certificate controls neither the multiplier separately
nor the geometric free boundary, and no efficiency bound is proved. The finite
bounds are as reliable as the enclosure rules and their floating-point execution,
whose basis is listed in \Cref{app:soundness}. Projection guarantees do not
transfer to the original state without the correction of \Cref{rem:direct-vs-estimator}.

\paragraph{Outlook.} The multiplier collapses on the ellipse and the rising obstacle,
and a change of its initialisation alone does not prevent this. Whether the collapse
causes the inaccurate trained states is open, and makes training of the state and multiplier near the free boundary, for instance by
stronger or adaptive enforcement of the constraint, the most important open problem
for the approximations themselves. It is independent of the certification, which
applies unchanged to any admissible triple.

\section*{Data availability}
The source code of the training, certification and reference-solver pipeline,
together with the scripts that regenerate every table and figure (using Arb for the
validated arithmetic), is publicly available on GitHub at
\url{https://github.com/DavoodDamircheli/certified-neural-obstacle} under the MIT
license; the version used in this paper (v1.0.0) is archived on Zenodo
\cite{DamircheliCode2026}. The repository includes the trained Stefan network of
seed~$0$ with its stored certificate. The remaining trained networks, the frozen
interior and trace partitions with their per-cell records, and the full-precision
per-term endpoints of every run are available from the author on request. Every validated table entry can be re-reduced in Arb from the per-cell records,
and every certificate can be recomputed from the trained networks and the
partition; for Stefan seed~$0$ and long-disk seed~$0$ under Protocol A this
recomputation reproduces all recorded quantities bit for bit.

\printcredits

\section*{Declaration of competing interest}
The author declares no known competing financial interests or personal
relationships that could have appeared to influence the work reported in this
paper.

\section*{Funding}
This research did not receive any specific grant from funding agencies in the
public, commercial, or not-for-profit sectors.

\appendix
\renewcommand{\thetheorem}{\Alph{section}.\arabic{theorem}}
\renewcommand{\thetable}{\arabic{table}}
\section{The arbitrary-triple reliability route}\label{app:estimator}
This appendix records the reliability estimate of the first-order framework
\cite{CDFFK2025}, with explicit constants. It covers the unprojected output and supplies the
comparison bounds of \Cref{tab:disk-same-partition} and \Cref{fig:stefan-certificates};
none of the main certificates depends on it.

\subsection{Energy norm and coercivity}
Fix $\Lambda\ge c_F^2$. For $\bv=(v,\tau,\mu)\in U$ let
$D(\bv):=\divtx(v,\tau)+b\cdot\gradx v+cv-\mu$ and
\begin{equation}\label{eq:enorm}
\begin{aligned}
\enorm{\bv}^2
:={}&\norm{\Ahalf\gradx v}{\LQ}^2+\norm{\Ainv\tau}{\LQ}^2\\
&+\norm{v(0)}{\LO}^2+\norm{v(T)}{\LO}^2
+\frac{\Lambda}{\alpha}\norm{D(\bv)}{\LQ}^2 .
\end{aligned}
\end{equation}
By \eqref{eq:Anorm} and Young's inequality, $\wnorm{\bv}^2\le C_{\rm eq}\enorm{\bv}^2$
with
$C_{\rm eq}:=\max\{\alpha^{-1}+4(\norm{b}{L^\infty}^2+\Lambda\norm{c}{L^\infty}^2)/\alpha,
\|A\|_\infty,1,2\alpha/\Lambda\}$; for $b=c=0$ the last entry may be replaced by
$\alpha/\Lambda$, so $C_{\rm eq}=1$ for all benchmarks. With
\[
\begin{aligned}
a(\bu,\bv):={}&\tfrac{\Lambda}{\alpha}\left(D(\bu),D(\bv)\right)_{\LQ}
+\left(u(0),v(0)\right)_{\LO}+\tfrac12\pair{\lambda}{v}+\tfrac12\pair{\mu}{u}\\
&+\left(\Ainv\sigma+\Ahalf\gradx u,\Ainv\tau+\Ahalf\gradx v\right)_{\LQ},
\end{aligned}
\]
\cite[Lemma~4, eq.~(11)]{CDFFK2025} gives
\begin{equation}\label{eq:coercive}
\enorm{\bv}^2\le6\,a(\bv,\bv)\qquad\forall\bv\in U .
\end{equation}

\subsection{Reliability for arbitrary admissible triples}
For $\bv\in\mathbb A$ with $w=\pos{g-v}$ and
$\Res:=f-\divtx(v,\tau)-b\cdot\gradx v-cv+\mu$ define
\begin{align}
\rho_r^2(\bv)&:=\frac{\Lambda}{\alpha}\norm{\Res}{\LQ}^2
+\norm{\Ainv\tau+\Ahalf\gradx v}{\LQ}^2+\norm{u_0-v(0)}{\LO}^2,\label{eq:rho-r}\\
\rho_p^2(\bv)&:=\norm{\Ahalf\gradx w}{\LQ}^2+\kappa\norm{w}{\LQ}^2
+\tfrac{1}{\alpha}\norm{\partial_tw}{\Vstar}^2\notag\\
&\qquad+\norm{w(0)}{\LO}^2+\norm{w(T)}{\LO}^2,
\label{eq:rho-p}
\end{align}
$\kappa:=\alpha/\Lambda+(\norm{b}{L^\infty}^2+\Lambda\norm{c}{L^\infty}^2)/\alpha$, and
$\rho^2:=\rho_r^2+\rho_p^2+\rho_c^2$ with $\rho_c$ from \eqref{eq:rho-c}.

\begin{theorem}[reliability for admissible triples]\label{thm:reliability}
Under \Cref{ass:coeff,ass:data,ass:g-reg}, for every $\bv\in\mathbb A$,
\begin{equation}\label{eq:reliability}
\enorm{\bu-\bv}^2\le18\rho_r^2(\bv)+144\rho_p^2(\bv)+12\rho_c^2(\bv)\le144\,\rho^2(\bv),
\end{equation}
and $\wnorm{\bu-\bv}^2\le144\,C_{\rm eq}\rho^2(\bv)$. If $b=c=0$, the constant $144$ on
$\rho_p^2$ can be replaced by $36$.
\end{theorem}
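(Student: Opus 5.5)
The approach is to apply the coercivity estimate \eqref{eq:coercive} to the error triple $\bu-\bv=(u-v,\sigma-\tau,\lambda-\mu)\in U$, compute $a(\bu-\bv,\bu-\bv)$ exactly in terms of residual quantities of $\bv$, and bound the one term that is not a residual, the multiplier pairing, by splitting $u-v=(u-\widehat v)+w$ with $w=\pos{g-v}$. The projected part is then handled exactly as in Step~3 of \Cref{thm:direct}, and the penetration part by duality, which produces $\rho_p$.

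\emph{Step 1 (the bilinear form on the error).} By \eqref{eq:fos-a}, $D(\bu)=f$, so $D(\bu-\bv)=f-D(\bv)=\Res$. By \eqref{eq:fos-b}, $\Ainv(\sigma-\tau)+\Ahalf\gradx(u-v)=-(\Ainv\tau+\Ahalf\gradx v)$. Moreover $(u-v)(0)=u_0-v(0)$, and the two multiplier halves combine to $\pair{\lambda-\mu}{u-v}$. Hence
\[
a(\bu-\bv,\bu-\bv)=\rho_r^2(\bv)+\pair{\lambda-\mu}{u-v}.
\]

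\emph{Step 2 (multiplier pairing).} Write $u-v=(u-\widehat v)+w$ with $\widehat v=v+w\in K_\V$. As in Step~3 of \Cref{thm:direct}, we have $\pair{\lambda}{u-\widehat v}\le0$ by \eqref{eq:VI-canonic}, and $(\mu,\widehat v-u)_{\LQ}\le\rho_c^2$. This gives
\[
\pair{\lambda-\mu}{u-\widehat v}\le\rho_c^2 .
\]
For the remaining term $\pair{\lambda-\mu}{w}$, $w\in\W$ by \ref{a3}, but $\lambda-\mu$ lies only in $\Vstar$. So I would use the graph identity $\lambda-\mu=\divtx(e,e_\sigma)+b\cdot\gradx e+ce-\Res$, with $e:=u-v$ and $e_\sigma:=\sigma-\tau$, and pair it term by term with $w$:
\begin{itemize}
\item the time derivative via \eqref{eq:ibp}, $\pair{\partial_te}{w}=-\pair{\partial_tw}{e}+(e(T),w(T))-(e(0),w(0))$, with $|\pair{\partial_tw}{e}|\le\alpha^{-1/2}\norm{\partial_tw}{\Vstar}\norm{\Ahalf\gradx e}{\LQ}$;
\item the flux term as $-(e_\sigma,\gradx w)=-(\Ainv e_\sigma,\Ahalf\gradx w)$;
\item the convection--reaction term bounded by $(\norm{b}{L^\infty}+\norm{c}{L^\infty}c_F)\norm{\gradx e}{}\norm{w}{}$, absorbed using the $\kappa\norm{w}{}^2$ part of $\rho_p$;
\item the term $(\Res,w)$ bounded by $\sqrt{\Lambda/\alpha}\norm{\Res}{}\cdot\sqrt{\alpha/\Lambda}\norm{w}{}$.
\end{itemize}
Each piece is a product of one component of $\enorm{\bu-\bv}$ and one component of $\rho_p$. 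Cauchy--Schwarz over the at most five pairs gives $\pair{\lambda-\mu}{w}\le C\,\enorm{\bu-\bv}\,\rho_p$, with $C=\sqrt2$ when $b=c=0$ and $C=2\sqrt2$ in general (this is where the $b,c$ contributions in $\kappa$ enter).

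\emph{Step 3 (assembly).} Combining, $\enorm{\bu-\bv}^2\le6\rho_r^2+6\rho_c^2+6C\enorm{\bu-\bv}\rho_p$. Young's inequality $6Cxy\le\tfrac12x^2+18C^2y^2$ then gives
\[
\enorm{\bu-\bv}^2\le12\rho_r^2+12\rho_c^2+36C^2\rho_p^2 ,
\]
which is inside the stated $18,144,12$ for $C^2\le4$ and yields $36$ for $C^2=1$ in the case $b=c=0$. The exact constant bookkeeping may force the slightly weaker $18$ on $\rho_r^2$, e.g.\ if $\Res$ is split differently between the two pairings. The weak-norm statement follows from $\wnorm{\cdot}^2\le C_{\rm eq}\enorm{\cdot}^2$.

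The main obstacle is Step~2: making the duality pairing $\pair{\lambda-\mu}{w}$ rigorous, since only grouped combinations lie in $\LQ$. One must also track which energy-norm component pairs with which $\rho_p$ component, so that the constants close as $144$, respectively $36$ when $b=c=0$.
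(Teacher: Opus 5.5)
Your argument is the paper's: coercivity \eqref{eq:coercive} for $\mathbf e=\bu-\bv$, the identity $a(\mathbf e,\mathbf e)=\rho_r^2(\bv)+\pair{\lambda-\mu}{u-v}$, the split through $\widehat v=v+w\in K_\V$ giving $\pair{\lambda-\mu}{u-v}\le\rho_c^2(\bv)+\pair{\lambda-\mu}{w}$, and a term-by-term pairing of the graph identity with $w$. The problem is your value of $C$. With $C=\sqrt2$ and $C=2\sqrt2$, Step~3 gives $36C^2=72$ and $288$, not $36$ and $144$. Your last paragraph then silently switches to $C^2=1$ and $C^2\le4$.

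The correct count is tighter. Write $G,F,Z_0,Z_T$ for the gradient, flux and trace parts of $\enorm{\mathbf e}^2$ and use $D(\mathbf e)=\Res$. For $b=c=0$ there are exactly five products: $G^{1/2}\cdot\alpha^{-1/2}\norm{\partial_tw}{\Vstar}$, $Z_T^{1/2}\norm{w(T)}{\LO}$, $Z_0^{1/2}\norm{w(0)}{\LO}$, $F^{1/2}\norm{\Ahalf\gradx w}{\LQ}$ and $(\Lambda/\alpha)^{1/2}\norm{D(\mathbf e)}{\LQ}\cdot(\alpha/\Lambda)^{1/2}\norm{w}{\LQ}$. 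Each component of $\enorm{\mathbf e}^2$ and of $\rho_p^2$ (where $\kappa=\alpha/\Lambda$) occurs exactly once, so Cauchy--Schwarz gives $C=1$.

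In general the product $G^{1/2}\cdot\alpha^{-1/2}(\norm{b}{L^\infty}+c_F\norm{c}{L^\infty})\norm{w}{\LQ}$ is added. Since $c_F^2\le\Lambda$, you have $\alpha^{-1}(\norm{b}{L^\infty}+c_F\norm{c}{L^\infty})^2\le2\beta$ with $\beta:=\kappa-\alpha/\Lambda$. The first factors then have squared sum $\enorm{\mathbf e}^2+G\le2\enorm{\mathbf e}^2$, and the second factors $\rho_p^2+\beta\norm{w}{\LQ}^2\le2\rho_p^2$, so $C=2$.

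With these values your Step~3 gives $\enorm{\mathbf e}^2\le12\rho_r^2+12\rho_c^2+36C^2\rho_p^2$, i.e.\ $36$ and $144$, and even $12$ rather than $18$ on $\rho_r^2$. The paper instead applies Young with a parameter $\delta$ to each product separately and charges the $\Res$ product to $\rho_r^2$ rather than to $\enorm{\mathbf e}^2$. That is where its $18$ comes from; otherwise the two bookkeepings are equivalent.

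The step you flag as the main obstacle is not one. By \ref{a3}, $w\in\W\subset\V$. Each term of $\lambda-\mu=\partial_te+\divx e_\sigma+b\cdot\gradx e+ce-\Res$ lies in $\Vstar$ on its own, since $e\in\W$ because $\bv\in U$ forces $v\in\W$. So every pairing is a genuine $\Vstar$--$\V$ pairing, exactly as in the paper. The grouping into $\divtx(v,\tau)-\mu$ matters only for knowing that $\Res\in\LQ$.
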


\begin{proof}
Let $\mathbf e=\bu-\bv$ and $z:=\max\{g,v\}=v+w$. By \eqref{eq:fos}, the first
argument of $D$ in $a(\mathbf e,\mathbf e)$ equals $\Res$, the constitutive term
equals $\norm{\Ainv\tau+\Ahalf\gradx v}{\LQ}^2$ and $(u-v)(0)=u_0-v(0)$, so
\begin{equation}\label{eq:identity}
a(\mathbf e,\mathbf e)=\rho_r^2(\bv)+\pair{\lambda-\mu}{u-v}.
\end{equation}
By \ref{a3}, $w\in\W\subset\V$, so $z\in K_\V$ is admissible in \eqref{eq:VI-canonic},
which gives $\pair{\lambda}{u-z}\le0$, a genuine $\Vstar$--$\V$ pairing; no pairing of
$\lambda$ with $g$ is needed. Hence $\pair{\lambda}{u-v}\le\pair{\lambda}{w}$, and
$\mu\ge0$, $u\ge g$ give $(\mu,v-u)_{\LQ}\le(\mu,v-g)_{\LQ}$. With
$w+(v-g)=\pos{v-g}$,
\begin{equation}\label{eq:step2}
\pair{\lambda-\mu}{u-v}\le\pair{\lambda-\mu}{w}+\rho_c^2(\bv).
\end{equation}
Subtracting the definition of $\Res$ from \eqref{eq:fos-a},
$\lambda-\mu=-\Res+\partial_t(u-v)+\divx(\sigma-\tau)+b\cdot\gradx(u-v)+c(u-v)$ in
$\Vstar$. Let $G,F,Z_s$ be the gradient, flux and trace parts of
$\enorm{\mathbf e}^2$ and $\delta\in(0,1]$. Pairing each term with $w\in\W$, using
\eqref{eq:ibp} for the time derivative, $\pair{\divx\chi}{w}=-(\chi,\gradx w)_{\LQ}$,
Friedrichs' inequality with $c_F\le\Lambda^{1/2}$, Cauchy--Schwarz and Young, with
$\mathcal Cz:=b\cdot\gradx z+cz$ and
$\beta:=(\norm{b}{L^\infty}^2+\Lambda\norm{c}{L^\infty}^2)/\alpha$,
\begin{align*}
\pair{-\Res}{w}&\le\tfrac12\rho_r^2+\tfrac1{2\delta}\tfrac{\alpha}{\Lambda}\norm{w}{\LQ}^2,\\
\pair{\partial_t(u-v)}{w}&\le\tfrac1{2\delta}\bigl(\tfrac1\alpha\norm{\partial_tw}{\Vstar}^2
+\norm{w(0)}{\LO}^2+\norm{w(T)}{\LO}^2\bigr)\\
&\quad+\tfrac\delta2(G+Z_0+Z_T),\\
\pair{\divx(\sigma-\tau)}{w}&\le\tfrac\delta2F+\tfrac1{2\delta}\norm{\Ahalf\gradx w}{\LQ}^2,\\
(\mathcal C(u-v),w)_{\LQ}&\le\tfrac\delta2G+\tfrac1\delta\beta\norm{w}{\LQ}^2,
\end{align*}
The error contributions sum to at most $\delta\enorm{\mathbf e}^2$ and the
penetration contributions to at most $\delta^{-1}\rho_p^2$, so
$\pair{\lambda-\mu}{w}\le\tfrac12\rho_r^2+\delta^{-1}\rho_p^2+\delta\enorm{\mathbf e}^2$.
Combining with \eqref{eq:coercive}, \eqref{eq:identity}, \eqref{eq:step2} and
absorbing for $\delta<1/6$,
\[
\enorm{\mathbf e}^2\le\frac{9\rho_r^2+6\rho_c^2+6\delta^{-1}\rho_p^2}{1-6\delta},
\]
and $\delta=1/12$ gives \eqref{eq:reliability}. If $b=c=0$, the last estimate
vanishes, the error contributions are at most $\tfrac\delta2\enorm{\mathbf e}^2$ and
each penetration coefficient is at most $1/(2\delta)$, so
$(1-3\delta)\enorm{\mathbf e}^2\le9\rho_r^2+6\rho_c^2+3\delta^{-1}\rho_p^2$, and
$\delta=1/6$ gives the constant $36$.
\end{proof}

\subsection{Computable variant and the projected estimator bound}
Since $\norm{\varphi}{\Vstar}\le c_F\norm{\varphi}{\LQ}\le\Lambda^{1/2}\norm{\varphi}{\LQ}$,
replacing $\alpha^{-1}\norm{\partial_tw}{\Vstar}^2$ by
$\frac{\Lambda}{\alpha}\norm{\partial_tw}{\LQ}^2$ in \eqref{eq:rho-p} gives a computable
$\widetilde\rho_p\ge\rho_p$, and \Cref{thm:reliability} holds with $\widetilde\rho$; all
terms of $\widetilde\rho^2$ are integrals of local densities, so it localizes exactly
over any partition and can drive marking; no local reliability estimate is
claimed. For neural triples this gives
$\wnorm{\bu-\uTh}^2\le C_{\rm eq}C_{\rm rel}\widetilde\rho^2(\uTh)$ with $C_{\rm rel}=144$,
or $36$ if $b=c=0$. By the argument of \Cref{lem:consistency} with
\Cref{lem:penetration}, $\widetilde\rho(\bv_n)\to0$ under the same hypotheses.

For the projected triple, $\pair{\lambda}{u-\widehat v}\le0$ and $\pos{g-\widehat v}=0$
remove the penetration terms, and \eqref{eq:identity} with \eqref{eq:coercive} gives
\begin{equation}\label{eq:projected-direct}
\enorm{\bu-\widehat{\bv}}^2\le6\bigl(\rho_r^2(\widehat{\bv})+\rho_c^2(\bv)\bigr).
\end{equation}
Its finite form, with $\widehat U_r$ as in \eqref{eq:direct-nosplit} and
$C_{\rm eq}=1$, is
\begin{equation}\label{eq:projected-finite}
\widehat G_U:=\sqrt{6\bigl(\widehat U_r+U_c\bigr)}
\ \ge\ \max\bigl\{\norm{\Ainv(\sigma-\tau)}{\LQ},\norm{\Ahalf\gradx(u-\widehat v)}{\LQ}\bigr\},
\end{equation}
the estimator-route column of \Cref{tab:disk-same-partition}.

\subsection{Algebra for \Cref{cor:grad-AR} and \Cref{rem:AR}}\label{app:comparison}
Put $a=s+\ell$ and $W=(s^2+\ell^2+\mathcal D_0)^{1/2}$, so that
$\sqrt{\mathcal L}+\sqrt{\mathcal D+\mathcal L}=\ell+W$. The claim $\mathcal G\le\ell+W$
is $\sqrt{a^2+2\mathcal D_0}\le\ell-s+2W$, whose right-hand side is nonnegative. After
squaring, the difference of the two sides is
$4(s^2+\ell^2-s\ell)+2\mathcal D_0-4(s-\ell)W$, nonnegative if $\ell\ge s$. If
$s>\ell$, put $p=s^2+\ell^2-s\ell$; then $(s-\ell)^2=p-s\ell$,
$W^2=p+s\ell+\mathcal D_0$ and
$(p+\tfrac12\mathcal D_0)^2-(s-\ell)^2W^2=\tfrac14\mathcal D_0^2+s^2\ell^2+s\ell\mathcal D_0\ge0$.
Equality throughout forces $\ell=\mathcal D_0=0$. Moreover
$\mathcal G^2=\tfrac12\bigl(a^2+\mathcal D_0+a\sqrt{a^2+2\mathcal D_0}\bigr)\le a^2+\mathcal D_0$.
Finally, \Cref{cor:grad-sharp} does not contain the Apushkinskaya--Repin bound,
since $(\ell+W)^2-\mathcal D_0-(s+\ell)^2=\ell(\ell+2W-2s)>0$ whenever $\ell>0$.
\section{Enclosures, arithmetic and reference norms}\label{app:enclosures}

\subsection{Enclosure rules}\label{app:enclosure-rules}
For an interval $I=[a,b]$, $\operatorname{supabs}(I)=\max\{|a|,|b|\}$ and
$\operatorname{sup2}(I)=\operatorname{supabs}(I)^2$. Addition and multiplication are
the hulls of the endpoint operations; squaring gives $[0,\max(a^2,b^2)]$ if $0\in I$
and $[\min(a^2,b^2),\max(a^2,b^2)]$ otherwise; $\tanh(I)=[\tanh a,\tanh b]$. Let
$Z^{k-1},J^{k-1},H^{k-1}$ enclose a layer value, Jacobian and Hessian on a box. For
$y^k=W_kz^{k-1}+b_k$ enclose $Y^k=W_kZ^{k-1}+b_k$, $Y^k_{,i}=W_kJ^{k-1}_{,i}$,
$Y^k_{,ij}=W_kH^{k-1}_{,ij}$, and
\[
\begin{gathered}
Z^k=\tanh(Y^k),\qquad
J^k_{,i}=\tanh'(Y^k)\odot Y^k_{,i},\\
H^k_{,ij}=\tanh''(Y^k)\odot Y^k_{,i}\odot Y^k_{,j}+\tanh'(Y^k)\odot Y^k_{,ij},
\end{gathered}
\]
with componentwise interval products, $J^0=I$, $H^0=0$ and an affine output layer.
With $P_0(y)=y$ and $P_{q+1}(y)=(1-y^2)P_q'(y)$, $\tanh^{(q)}=P_q(\tanh)$, so
interval evaluation of $P_q$ gives derivative ranges, with $|\tanh'|\le1$ and
$|\tanh''|\le4/(3\sqrt3)$ as global fallbacks. A first-order Taylor model of a scalar map $F$ about an expansion point $a_0$ on an
enclosed range $I=[\underline a,\overline a]\ni a_0$ is
$F(a)\in F(a_0)+F'(a_0)(a-a_0)+[-M_2r^2/2,M_2r^2/2]$ for $a\in I$, with
$M_2\ge\sup_I|F''|$ and $r:=\max\{a_0-\underline a,\overline a-a_0\}=\sup_{a\in I}|a-a_0|$;
the radius is the largest distance from $a_0$ to the range, not the half-width of
$I$, unless $a_0$ is its midpoint.

The state is enclosed from $u_\theta=BN_\theta$ by product rules,
$\partial_tu_\theta=B\,\partial_tN_\theta$ and $\gradx u_\theta=N_\theta\gradx B+B\gradx N_\theta$;
the flux head is unmasked; $\lambda_\psi$ is enclosed by interval squaring of $M_\psi$;
closed-form data are enclosed directly on each cell. This yields intervals for the
two branches \eqref{eq:branches}, for $\gamma=g-u_\theta$ and for $\lambda_\psi$. On
trace cells, $u_0-\widehat v(0)$ is enclosed by the hull of $u_0-u_\theta(0,\cdot)$ and
$u_0-g(0,\cdot)$ unless the sign of $\gamma(0,\cdot)$ is resolved. Trace cells
contribute to the same sums and are enclosed even when only the interior partition
is refined. A smooth Taylor remainder must not be used across a data interface;
there, the hull of the branch enclosures is used.

For the comparisons of \Cref{sec:refinement}: an affine form
$a=a_0+\sum_ja_j\varepsilon_j$, $|\varepsilon_j|\le1$, has range $[a_0-r,a_0+r]$,
$r=\sum_j|a_j|$, and a product appends one fresh symbol of radius $r_ar_b$;
second-order Taylor models use the cubic Lagrange remainder
$\tfrac16\sum_{i,j,k}M_{ijk}r_ir_jr_k$ with validated third-derivative ranges.
Affine, interval and Taylor ranges are intersected only when each is already valid;
an empty intersection is an error, not a certificate.

\subsection{Soundness basis of the finite-precision evaluation}\label{app:soundness}
A reported upper sum is valid if every per-cell bound follows the rules above and
every arithmetic step returns an interval containing its exact result.
\Cref{tab:soundness} lists the argument for the second property and its
assumptions.

\begin{table}[htbp]
\centering
\caption{Primitives of the finite evaluation, the argument for their outward
enclosure, and its assumptions.}
\label{tab:soundness}
\footnotesize
\begin{tabular}{p{.21\linewidth}p{.49\linewidth}p{.22\linewidth}}
\toprule
Primitive & Enclosure argument & Assumptions\\
\midrule
$+,-,\times,\div,\sqrt{\ }$ (binary64) & Correctly rounded IEEE~754 operations; each endpoint moved one unit in the last place outward after every operation. & Round to nearest; no fused multiply--add or extended intermediates; no flush to zero.\\
Sums of products (TM1 path) & Sequences of the operations above, each rounded outward, so the result encloses the exact sum in any order; no BLAS. & As above.\\
$\tanh$ and derivatives & Monotonicity; endpoint values from an Arb-based dyadic lookup with Taylor remainder bounds, converted outward; derivative ranges from $P_q(\tanh)$; no system transcendental as endpoint oracle. & Ball semantics of Arb \cite{Johansson2017}.\\
Taylor-model remainders & Lagrange remainders with second-derivative bounds from the previous row on the enclosed pre-activation range $I$, with radius $\sup_{a\in I}|a-a_0|$; the expansion point $a_0$ is checked to lie in $I$. & Rows above.\\
Data $f,g,u_0$ and derivatives & Arb ball evaluation on the cell at $\ge64$ bits, converted outward. & Ball semantics of Arb.\\
Volumes, reductions, constants & Exact dyadic volumes; endpoints converted exactly to binary rationals; Arb ball sums; $\overline c_F^{\,2}$ an upward-rounded rational; $N$ a $192$-bit lower endpoint. & Ball semantics of Arb.\\
Coverage & Interior and trace partitions are checked to cover $\overline Q$ and $\overline\Omega$; a NaN, overflow or empty intersection is an error. & None.\\
Lower sums for $N_{\rm lo}$ & Lower endpoints of squared TM1 ranges ($0$ if the range contains $0$), rounded towards $-\infty$; $N_{\rm lo}$ in $192$-bit Arb rounded down. & As above.\\
Affine comparison only & BLAS products enlarged by the a~priori bound $\gamma_{k+2}\sum|w_{ij}||z_j|$ \cite{Higham2002} plus an underflow term. & Standard floating-point model.\\
\bottomrule
\end{tabular}
\end{table}

The first two rows are consequences of IEEE~754 in the stated environment; the
others rest on the documented semantics of Arb balls. The released code identifies, for each row, where the rule is implemented, and an
environment check executed in every process (NumPy 1.26.4, OpenBLAS 0.3.23, Ubuntu
22.04) asserts round-to-nearest-even arithmetic, no flush to zero and no fused
multiply--add in the operations used. Two checks that share no code with the TM1
evaluation are applied to seven
certificates (long-disk seeds $0$--$2$ and ellipse seed $0$ at $8\cdot10^6$ cells,
Stefan seeds $0$--$2$ at $512^2$ cells), on random, largest-contribution, trace and
adversarial cells, $67{,}200$ (cell, term) checks in all. (i) Each term is evaluated
by networks built in $192$-bit Arb at $33$ points per space--time direction of every
cell, with binary64 screening and re-evaluation of corners and extremes in Arb; no
value exceeds the reported upper endpoint (largest ratio $0.99999$). (ii) The hull of a naive Arb interval extension over $512$ subcells is a valid
enclosure computed by a different algorithm. Two valid upper bounds need not be
ordered, so this is a consistency comparison rather than a test of validity; in all
checks the TM1 endpoint is the smaller one (largest ratio of TM1 to bisection
endpoint $0.92$). Check (i) is a necessary condition for enclosure validity. Neither
check establishes it: validity rests on the enclosure rules of
\Cref{app:enclosure-rules} and their outward-rounded execution described above.

\subsection{Closed-form reference norms}\label{app:norms}
For Stefan, \eqref{eq:stefan-u} gives $u_x=H(t)+\mathbf1_{\{x<t\}}(1-e^{t-x})$ and
$\int_0^1u_x^2\dd x=-\tfrac12e^{2t}+2te^t-t^2-t+\tfrac12$, whose integral over
$(0,1)$ is $23/12-e^2/4$; since $\sigma=-u_x$, $\norm{\sigma}{\LQ}=N$.

For the disk, $\gradx u=\gradx g+2\kappa_0\zeta\gradx\zeta\,\mathbf1_{\{r<R(t)\}}$ with
$\gradx\zeta=-(x-c_0)/R(t)$. The integral of $|\gradx g|^2$ is elementary; in polar
coordinates about $c_0$ the remaining integrand is a polynomial in $r$, $\cos\varphi$,
$\sin\varphi$ with coefficients polynomial in $H(t)$ and $R(t)$, and symbolic
integration gives
\begin{equation}\label{eq:disk-reference}
\begin{aligned}
N^2&=\frac{1408}{135}+\frac{128e^2}{45}-\frac{512e}{45}
+\frac{17878909\,\pi}{80640000}-\frac{833879\,e\pi}{9600000}\\
&\simeq0.474132,\qquad N\simeq0.688572,
\end{aligned}
\end{equation}
checked against two independent tensor Gauss--Legendre quadratures to $15$ digits.

For the ellipse, $\gradx u=\gradx u_b+2k_0\zeta_-\gradx\zeta$; in the coordinates
$x_1=0.43+a(t)\rho\cos\varphi$, $x_2=0.57+d(t)\rho\sin\varphi$ the integrand over the
moving ellipse is a polynomial in $\rho$, $\cos\varphi$, $\sin\varphi$, and
\begin{equation}\label{eq:ellipse-reference}
\begin{aligned}
N^2&=\frac{2356}{23625}- \frac{341875988641 \pi}{2625000000000000} - \frac{\pi \ln2}{61250}
- \frac{\pi \ln11}{375000} + \frac{\pi \ln6}{375000} + \frac{\pi \ln15}{183750},\\
N&\simeq0.3151529485,
\end{aligned}
\end{equation}
checked against two independent quadratures to $27$ digits. In all three cases the
validated bracket is the $192$-bit ball evaluation of the closed form; the Friedrichs
constants $1/\pi^2$ and $1/(2\pi^2)$ are enclosed the same way and rounded upward.

\FloatBarrier

\bibliographystyle{elsarticle-num-names}
\bibliography{refs}

\end{document}